\numberwithin{equation}{section}
\newcommand{\1}{\mathbf {1}}
\newcommand{\N}{\mathbb{N}}
\newcommand{\Z}{\mathbb{Z}}
\newcommand{\R}{\mathbb{R}}
\newcommand{\C}{\mathbb{C}}
\newcommand{\h}{\mathfrak{h}}
\newcommand{\CI}{\mathcal{I}}
\newcommand{\CA}{\mathcal{A}}
\newcommand{\CF}{\mathcal{F}}
\newcommand{\CJ}{\mathcal{J}}
\newcommand{\CL}{\mathcal{L}}
\newcommand{\CN}{\mathcal{N}}
\newcommand{\CU}{\mathcal{U}}
\newcommand{\CW}{\mathcal{W}}
\newcommand{\mraff}{\mathrm{aff}}
\newcommand{\mrprin}{\mathrm{prin}}
\newcommand{\la}{\langle}
\newcommand{\ra}{\rangle}
\newcommand{\bu}{\mathbf{u}}
\newcommand{\bv}{\mathbf{v}}
\DeclareMathOperator{\Aut}{Aut}
\DeclareMathOperator{\Hom}{Hom}
\DeclareMathOperator{\gr}{gr}
\DeclareMathOperator{\Ind}{Ind}
\DeclareMathOperator{\Ker}{Ker}
\DeclareMathOperator{\Res}{Res}
\DeclareMathOperator{\spn}{span}
\DeclareMathOperator{\wt}{wt}
\newtheorem{thm}{Theorem}[section]
\newtheorem{prop}[thm]{Proposition}
\newtheorem{lem}[thm]{Lemma}
\newtheorem{cor}[thm]{Corollary}
\begin{document}

\title[parafermion VOAs]
{Zhu's algebra, $C_2$-algebra and $C_2$-cofiniteness of parafermion vertex operator algebras}

\author[T. Arakawa]{Tomoyuki Arakawa}
\address{Research Institute for Mathematical Sciences,
Kyoto University, Kyoto 606-8502, Japan}
\email{arakawa@kurims.kyoto-u.ac.jp}

\author[C. H. Lam]{Ching Hung Lam}
\address{ Institute of Mathematics, Academia Sinica, Taipei 10617, Taiwan, R.O.C.}
\email{chlam@math.sinica.edu.tw}

\author[H. Yamada]{Hiromichi Yamada}
\address{Department of Mathematics, Hitotsubashi University, Kunitachi,
Tokyo 186-8601, Japan}
\email{yamada@econ.hit-u.ac.jp}

\keywords{Vertex operator algebras, affine Lie algebras}

\subjclass[2010]{Primary 17B69; Secondary 17B67}

\begin{abstract}
We study Zhu's algebra, $C_2$-algebra and $C_2$-cofiniteness 
of parafermion vertex operator algebras. 
We first give a detailed study of Zhu's algebra and $C_2$-algebra of parafermion vertex 
operator algebras associated with the affine Kac-Moody Lie algebra $\widehat{sl}_2$.
We show that they have the same dimension and Zhu's algebra is semisimple.
The classification of irreducible modules is also established.
Finally, we prove that the parafermion vertex operator algebras for any 
finite dimensional simple Lie algebras are $C_2$-cofinite.
\end{abstract}

\maketitle

\section{Introduction}

Let $\mathfrak{g}$ be a finite dimensional simple Lie algebra and 
$\widehat{\mathfrak{g}}$ the affine Kac-Moody Lie algebra
associated with $\mathfrak{g}$. Let $V_{\widehat{\mathfrak{g}}}(k,0)$ be the vacuum Weyl module
for $\widehat{\mathfrak{g}}$ with level $k$, where $k$ is a
positive integer and $L_{\widehat{\mathfrak{g}}}(k,0)$ its
simple quotient. The vertex operator algebra
$L_{\widehat{\mathfrak{g}}}(k,0)$ contains a Heisenberg vertex
operator algebra corresponding to a Cartan subalgebra $\mathfrak{h}$
of $\mathfrak{g}$. The commutant $K(\mathfrak{g},k)$ of the
Heisenberg vertex operator algebra in
$L_{\widehat{\mathfrak{g}}}(k,0)$ is called a parafermion vertex
operator algebra.

Recently, the structure of the parafermion vertex operator algebra $K(\mathfrak{g},k)$ 
was studied by C. Dong and Q. Wang \cite{DW, DW2}. 
In particular, 
they noticed the importance of the special case $\mathfrak{g} = sl_2$.  
The parafermion vertex operator algebra $K(sl_2, k)$ is known to be a $W$-algebra. 
It was also shown in \cite{DLWY, DLY} that $K(sl_2, k)$ is isomorphic to 
the simple quotient of the $W$-algebra $W(2,3,4,5)$ of \cite{BEHHH, Hornfeck}. 
In fact, it is expected that  the parafermion vertex operator algebra 
$K(sl_2, k)$ is isomorphic to a $W$-algebra $W_\ell(sl_k,f_\mrprin)$ of 
\cite{Arakawa, FKW} with $\ell = \frac{k+2}{k+1} - k$ and $f_\mrprin$ 
a principal nilpotent element of $sl_k$.  

For a vertex operator algebra $V$, Zhu \cite{Zhu} introduced two
intrinsic associative algebras, one is Zhu's algebra $A(V)$ and
the other is Zhu's $C_2$-algebra $V/C_2(V)$. We denote $V/C_2(V)$ by $R_V$
for simplicity of notation. 
In this article, we study Zhu's algebra, Zhu's $C_2$-algebra and $C_2$-cofiniteness 
of parafermion vertex operator algebras.  There are two main parts.
The first one is a detailed analysis of Zhu's algebra and $C_2$-algebra
of parafermion vertex operator algebras $K(sl_2,k)$ associated with
$\widehat{sl}_2$.  
The results of this paper concerning $\CW = K(sl_2,k)$ can be summarized as follows 
(Theorems \ref{thm:embedding_RW_in_RL}, \ref{thm:dim_Zhu_CW}, 
\ref{thm:Zhu_alg_CW} and \ref{thm:projectivity-CW}).
\begin{enumerate}
\item Zhu's $C_2$-algebra $R_\CW$ of $\CW$ is of dimension $k(k+1)/2$.
\item Zhu's algebra $A(\CW)$ of $\CW$ is semisimple and of dimension $k(k+1)/2$.
\item $M^{i,j}$, $0 \le i \le k$, $0 \le j \le i-1$ constructed in \cite{DLY}
form a complete set of isomorphism classes of irreducible $\CW$-modules.
\item $\CW$ is projective as a $\CW$-module.
\end{enumerate}
As to the importance of projectivity, see \cite{Miyamoto}.
Two embeddings $R_{\CW} \hookrightarrow R_{L(k,0)}$ and
$A(\CW) \hookrightarrow A(L(k,0))$ are also obtained,
where $L(k,0) = L_{\widehat{sl}_2}(k,0)$.

In the second part, we discuss the parafermion vertex operator
algebra $K(\mathfrak{g},k)$ for a general $\mathfrak{g}$.
Using the $C_2$-cofiniteness of $K(sl_2,k)$ we show the following assertion 
(Theorem \ref{thm:C2_cofinte_general_case}).
\begin{enumerate}
\item[(5)] $K(\mathfrak{g},k)$ is $C_2$-cofinite for all finite dimensional simple
Lie algebras $\mathfrak{g}$ and all positive integers $k$.
\end{enumerate}

\medskip

Now let us give a brief discussion of our method. Denote by $\CN = N(sl_2,k)$ 
the commutant of the Heisenberg vertex operator algebra in the vacuum Weyl
module $V_{\widehat{sl}_2}(k,0)$. The vertex operator algebra
$\CN$ is not simple. It possesses a unique maximal ideal
$\mathcal{I}$ and $\CN/\mathcal{I} \cong \CW$.
Basic properties of $\CN$ and $\CW$ are established in \cite{DLWY, DLY}.
In fact, a set $\{ W^2, W^3, W^4, W^5 \}$ of strong generators,
the commutation relation among the operators $W^s_n$, $s = 2,3,4,5$, $n \in \Z$,
a null field $\bv^0$ of weight $8$ and a singular vector $\bu^0$ of weight $k+1$
are obtained.
Moreover, irreducible modules $M^{i,j}$ for $\CW$ are constructed
and the action of $o(W^s) = W^s_{s-1}$, $s = 2, 3, 4, 5$ on the top level of
$M^{i,j}$ is calculated.

The strong generators $W^2, W^3, W^4, W^5$ provide a
spanning set for $\CN$. The vectors of weight at most $7$ in the
spanning set are linearly independent. However, it is not the case
for those vectors of weight greater than $7$. There are two
nontrivial linear relations among the vectors of weight $8$ in the
spanning set. 
The null field $\bv^0$ gives one of the relations. 
The singular vector $\bu^0$, on the other hand, 
is closely related to the integrability condition
of the level $k$ integrable highest weight module
$L_{\widehat{sl}_2}(k,0)$ for $\widehat{sl}_2$.

Our main tool is a detailed analysis of the weight $8$ null field $\bv^0$
and the singular vector $\bu^0$,
together with  their images under the operator $W^3_1$ in Zhu's $C_2$-algebra.
Actually, we use $\bv^0$, $W^3_1 \bv^0$ and $(W^3_1)^2 \bv^0$
to show an embedding $R_\CN \hookrightarrow R_{V(k,0)}$
of Zhu's $C_2$-algebra  $R_\CN$ of $\CN$ in
Zhu's $C_2$-algebra $R_{V(k,0)}$ of $V(k,0) = V_{\widehat{sl}_2}(k,0)$.
The singular vector $\bu^0$ and $(W^3_1)^r \bu^0$, $r = 1, 2, 3$ are
necessary to determine the dimension of  $R_\CW$.
The calculation of various elements in $\CN$ or $V(k,0)$
is very difficult. On the other hand, the calculation in Zhu's $C_2$-algebra
is relatively easy, since the algebra is commutative and associative.
In fact, the operator $W^3_1$ induces a derivation on $R_\CN$ so that
we can express $(W^3_1)^r \bv^0$, $r = 0,1,2$ and
$(W^3_1)^r \bu^0$, $r = 0,1,2,3$ explicitly as elements of $R_{V(k,0)}$
by using the spanning set modulo $C_2(\CN)$ and
the embedding $R_\CN \hookrightarrow R_{V(k,0)}$.
This enable us to draw necessary information from these vectors.

In \cite{DW, DW2}, it was shown that the parafermion vertex operator algebra 
$K(\mathfrak{g},k)$ for a general $\mathfrak{g}$ is generated by
the vertex operator subalgebras $P_\alpha \cong K(sl_2, k_\alpha)$
corresponding to the positive roots $\alpha$, where $k_\alpha \in \{k, 2k, 3k \}$.
Furthermore, they proved the $C_2$-cofiniteness of $K(\mathfrak{g},k)$
under the assumption that each $P_\alpha$ is rational and $C_2$-cofinite.
In this paper we modify slightly their argument and
show that $K(\mathfrak{g},k)$ is $C_2$-cofinite
by using only the $C_2$-cofiniteness of $P_\alpha$'s.

If $k \le 4$, the vertex operator algebra $\CW$ is a well-known one.
In fact, $\CW $ is isomorphic to
an Ising model $\CL(1/2,0)$,
a $3$-state Potts model $\CL(4/5,0) \oplus \CL(4/5,3)$
or an orbifold $V_{\Z\beta}^+$ of a rank one lattice vertex operator algebra
$V_{\Z\beta}$ with $\la \beta,\beta \ra = 6$ according as $k = 2$, $3$ or $4$
\cite[Section 5]{DLY}.
Those vertex operator algebras are known to be rational and $C_2$-cofinite
and their irreducible modules are classified.
We note that $W^3 = W^4 = W^5 = 0$ if $k = 2$,
$W^4 = W^5 = 0$ if $k = 3$, and $W^5 = 0$ if $k = 4$ in $\CW$,
that is, these $W^s$'s are contained in the maximal ideal $\CI$ of $\CN$
and their images in $\CW = \CN/\CI$ is $0$
for such a small $k$.
Note also that  $\CW = \C$ if $k = 1$.
Thus we assume that $k \ge 5$ for the study of $\CW$.
In the case $k = 5, 6$, it is already known that $\CW$ is rational and $C_2$-cofinite, 
and the irreducible modules for $\CW$ are classified 
\cite[Section 5]{DLY}.

The organization of the paper is as follows.
Section \ref{sec:preliminaries} is devoted to preliminaries.
In Section \ref{sec:basic_properties_C2} we collect basic properties of
Zhu's $C_2$-algebra.
In Section \ref{sec:CN_mod_C2_CN} we study a set of generators of
$R_\CN$ and the action of  $W^3_1$ on it.
In Section \ref{sec:CN_mod_C2_Weyl_module} we discuss an embedding of $R_\CN$ in
$R_{V(k,0)}$ by using $(W^3_1)^r \bv^0$ modulo $C_2(\CN)$, $r = 0,1,2$.
Moreover, a differential operator corresponding to the action of $W^3_1$ is introduced.
In Section \ref{sec:singular_vec_and_CA} we calculate $(W^3_1)^r \bu^0$
modulo $C_2(V(k,0))$ explicitly, $r = 0,1,2,3$ and study the ideal of $R_{\CN}$
generated by these elements.
In Section \ref{sec:embedding_C2CW} we show an embedding of $R_{\CW}$ in
$R_{L(k,0)}$ and determine its dimension.
Here we use a basis of $L(k,0)$ \cite{FKLMM, MP}.
In Section  \ref{sec:Zhu_alg_CW} we show an embedding of $A(\CW)$ in $A(L(k,0))$
and determine its dimension. In particular, we have
$\dim R_{\CW} = \dim A(\CW) = k(k+1)/2$.
Furthermore, we show that $A(\CW)$ is semisimple and classify the irreducible
modules for $\CW$.
In Section \ref{sec:projectivity_CW} we discuss the projectivity of $\CW$.
Finally, in Section \ref{sec:C2_general_case} we modify the argument of
\cite{DW2} and show that the parafermion vertex operator algebra
$K(\mathfrak{g},k)$ is $C_2$-cofinite for all finite dimensional simple
Lie algebras $\mathfrak{g}$ and all positive integers $k$.

\section{Preliminaries}\label{sec:preliminaries}

In this section we fix notation and collect basic properties of various 
objects for later use. 
Our notation for vertex operator algebras is standard \cite{FLM, LL}. Let 
$V = (V,Y,\1,\omega)$ be a vertex operator algebra. 
Then $V = \bigoplus_{n \ge 0} V_{(n)}$ with $V_{(0)} = \C\1$ and 
$Y(v,z) = \sum_{n \in \Z} v_n z^{-n-1}$ is the vertex operator associated with $v \in V$. 
We also denote $\omega_{n+1}$ by $L(n)$. An element $v \in V_{(n)}$ is said to be 
homogeneous of weight $n$ and we write $\wt v = n$. For a  homogeneous $v$, 
we have $v_m V_{(n)} \subset V_{(n + \wt v - m - 1)}$, that is, the operator 
$v_m$ is of weight $\wt v - m - 1$.

We use the following identities for a vertex operator algebra $V$ 
\cite[(3.1.9), (3.1.12)]{LL}.
\begin{align}
[a_m, b_n] &= \sum_{i \ge 0} \binom{m}{i} (a_i b)_{m+n-i},\label{eq:VOA-formula-1}\\
(a_m b)_n &= \sum_{i \ge 0} (-1)^i \binom{m}{i}
\big( a_{m-i} b_{n+i} - (-1)^m b_{m+n-i} a_i \big),\label{eq:VOA-formula-2}\\
[L(-1), a_n] &=  (L(-1)a)_n = - n a_{n-1}\label{eq:Lm1-derivative}
\end{align}
for $a, b \in V$ and $m, n \in \Z$.

For a vertex operator algebra $V$, 
let $C_2(V) = \spn_{\C} \{ a_{-2}b \,|\, a, b \in V \}$. 
Zhu's $C_2$-algebra is the quotient space $R_V = V/C_2(V)$ 
\cite[Section 4.4]{Zhu}. 
The vertex operator algebra $V$ is said to 
be $C_2$-cofinite if $\dim R_V < \infty$.

Define two binary operations
\begin{equation}\label{eq:Zhu_binary_ops}
a*b = \sum_{i \ge 0} \binom{\wt a}{i} a_{i-1}b, \qquad
a \circ b = \sum_{i \ge 0} \binom{\wt a}{i}a_{i-2} b
\end{equation}
for homogeneous $a,b\in V$ and extend for
arbitrary $a, b \in V$ by linearity.
Let $O(V) = \spn_{\C} \{ a \circ b\,|\, a, b \in V\}$, which
is a two sided ideal with respect to the operation $*$.
Let $A(V)=V/O(V)$. Then
$(A(V),*)$ is an associative algebra \cite[Theorem 2.1.1]{Zhu} 
called Zhu's algebra of $V$.

A vertex operator algebra is said to be rational if every admissible module 
(that is, $\N$-graded weak module) is completely reducible.

From now through Section \ref{sec:projectivity_CW}, we fix an integer $k \ge 5$. 
As to parafermion vertex operator algebras associated with
$\widehat{sl}_2$, we tend to follow the notation in \cite{DLWY,
DLY}. In particular, $\{ h, e, f\}$ is a standard Chevalley basis
of $sl_2$ with $[h,e] = 2e$, $[h,f] = -2f$, $[e,f] = h$ for the
bracket, $\la \,\cdot\,,\,\cdot\,\ra$ is the normalized Killing form 
so that $\la h,h \ra = 2$, $\la e,f \ra = 1$, 
$\la h,e \ra = \la h,f \ra = \la e,e \ra = \la f,f \ra = 0$, and 
$\widehat{sl}_2 = sl_2 \otimes \C[t,t^{-1}] \oplus \C C$ 
is the corresponding affine Lie algebra.
Moreover, the vacuum Weyl module 
\begin{equation*}
V(k,0) = V_{\widehat{sl}_2}(k,0) = \Ind_{sl_2 \otimes \C[t]\oplus \C
C}^{\widehat{sl}_2}\C
\end{equation*}
with level $k$ is an induced $\widehat{sl}_2$-module such that $sl_2 \otimes \C[t]$ 
acts as $0$ and $C$ acts as $k$ on the vacuum vector $\1=1$. 
We denote by $a(n)$ the operator on $V(k,0)$ corresponding to the action of
$a \otimes t^n$. Then
\begin{equation}\label{eq:affine-commutation}
[a(m), b(n)] = [a,b](m+n) + m \la a,b \ra \delta_{m+n,0}k
\end{equation}
for $a, b \in sl_2$ and $m,n\in \Z$. Also $a(n)\1 =0$ for $n \ge 0$.
The vectors
\begin{equation}\label{eq:V-basis}
h(-i_1) \cdots h(-i_p) e(-j_1) \cdots e(-j_q) f(-m_1) \cdots f(-m_r)\1,
\end{equation}
$i_1 \ge \cdots \ge i_p \ge 1$, $j_1 \ge \cdots \ge j_q \ge 1$, $m_1 \ge
\cdots \ge m_r \ge 1$ and $p,q,r \ge 0$ form a basis of $V(k,0)$. 

Let $a(z) = \sum_{n \in \Z} a(n)z^{-n-1}$. Then $V(k,0)$ is a vertex
operator algebra such that $Y(a(-1)\1,z) = a(z)$ with central charge
$3k/(k+2)$ (\cite{FZ}, \cite[Section 6.2]{LL}). 
The conformal vector of $V(k,0)$ is 
\begin{equation*}
\omega_{\mraff} 
= \frac{1}{2(k+2)} \Big( -h(-2)\1 + \frac{1}{2}h(-1)^2\1 + 2e(-1)f(-1)\1 \Big).
\end{equation*}

The vector of \eqref{eq:V-basis} has weight
$i_1 + \cdots + i_p + j_1 + \cdots + j_q + m_1 + \cdots + m_r$
and it is also an eigenvector for $h(0)$ with eigenvalue $2(q-r)$. 
The vertex operator algebra $V(k,0)$ has a unique maximal ideal 
$\CJ$, which is generated by a single vector $e(-1)^{k+1}\1$ \cite{Kac}. 
The quotient space $L(k,0) = L_{\widehat{sl}_2}(k,0) = V(k,0)/\CJ$ is 
the integrable highest weight module for $\widehat{sl}_2$ with level $k$. 
For simplicity of notation we use the same symbol as \eqref{eq:V-basis} 
to denote its image in $L(k,0)$.

Let $M_{\widehat{\h}}(k,0)$ be the Heisenberg vertex operator algebra 
spanned by  $h(-i_1) \cdots h(-i_p)\1$, $i_1 \ge \cdots \ge i_p \ge 1$, $p \ge 0$ and 
\begin{equation*}
\CN = \{ v \in V(k,0)\,|\, h(n) v = 0 \text{ for } n \ge 0\}
\end{equation*}
be the commutant of  $M_{\widehat{\h}}(k,0)$ in $V(k,0)$. 
The conformal vector of $\CN$ is 
\begin{equation*}
W^2 = \frac{1}{2k(k+2)} \Big( -kh(-2)\1 -h(-1)^2\1 + 2k e(-1)f(-1)\1 \Big),
\end{equation*}
which is the difference of $\omega_{\mraff}$ and the conformal vector 
$(1/4k)h(-1)^2\1$ of $M_{\widehat{\h}}(k,0)$. 
The central charge of $\CN$ is $2(k-1)/(k+2)$. 
Note that the weight of a homogeneous element of $\CN$ coincides with its 
weight as an element of $V(k,0)$.

The vertex operator algebra $\CN$ is strongly generated by four vectors 
$W^2, W^3, W^4, W^5$ of weight $2,3,4,5$, respectively. 
That is, $\CN$ is spanned by the vectors
\begin{equation}\label{eq:normal-form}
W^2_{-i_1} \cdots W^2_{-i_p} W^3_{-j_1} \cdots W^3_{-j_q} W^4_{-m_1}
\cdots W^4_{-m_r} W^5_{-n_1} \cdots W^5_{-n_s}\1
\end{equation}
with $i_1 \ge \cdots \ge i_p \ge 1$, $j_1 \ge \cdots \ge j_q \ge 1$, $m_1
\ge \cdots \ge m_r \ge 1$, $n_1 \ge \cdots \ge n_s \ge 1$ 
and $p,q,r,s \ge 0$ 
(\cite[Theorem 3.1]{DLWY}, \cite[Lemma 2.4]{DLY}.) 
The vectors $W^3$, $W^4$ and $W^5$ are given explicitly in 
\cite[Appendix A]{DLY}. In particular, 
\begin{equation*}
\begin{split}
W^3 &= k^2 h(-3)\1 + 3 k h(-2)h(-1)\1 +
2h(-1)^3\1 - 6k h(-1)e(-1)f(-1)\1 \\
& \quad + 3 k^2e(-2)f(-1)\1 - 3 k^2e(-1)f(-2)\1.
\end{split}
\end{equation*}

The vector $W^s$ is up to a scalar multiple, 
a unique Virasoro primary vector of weight $s$ for $s = 3,4,5$. 
Thus $W^2_n W^s = 0$ for $n \ge 2$, $W^2_1 W^s = sW^s$ and 
$W^2_0 W^s = W^s_{-2}\1$. Moreover, $W^r_n W^s$, $3 \le r \le s \le 5$, $n \ge 0$ 
is expressed as a linear combination of vectors of the form \eqref{eq:normal-form} 
in \cite[Appendix B]{DLY}. Hence the commutation relation $[W^r_m, W^s_n]$ 
follows from the formulas \eqref{eq:VOA-formula-1} and \eqref{eq:VOA-formula-2}. 
The automorphism group 
$\Aut \CN = \langle \theta \rangle$ is of order $2$ with 
$\theta (W^s) = (-1)^s W^s$, $s = 2,3,4,5$ \cite[Theorem 2.5]{DLY}. 

The vertex operator algebra $\CN$ has a unique
maximal ideal $\CI$, which is generated by a singular vector 
 \cite[Theorem 4.2]{DLWY}
\begin{equation}\label{eq:singular-vec}
\bu^0 = f(0)^{k+1}e(-1)^{k+1}\1.
\end{equation} 
It is known that $\CI = \CN \cap \CJ$ \cite[Lemma 3.1]{DLY}. 
A parafermion vertex operator algebra
\begin{equation*}
\CW = K(sl_2,k) = \{ v \in L(k,0)\,|\, h(n) v = 0 \text{ for } n \ge 0\}
\end{equation*}
associated with $\widehat{sl}_2$ is the commutant 
of the Heisenberg vertex operator algebra $M_{\widehat{\h}}(k,0)$ in $L(k,0)$. 
It can be identified with the quotient $\CN/\CI$ of $\CN$ by $\CI$.  
The central charge of $\CW$ is $2(k-1)/(k+2)$ and 
$\CW = \bigoplus_{n \ge 0} \CW_{(n)}$ with $\CW_{(0)} = \C\1$ and 
$\CW_{(1)} = 0$. 
For simplicity of notation we use the same symbols $W^s$, $s = 2,3,4,5$ to 
denote their images in $\CW = \CN/\CI$.

An intermediate vertex operator algebra 
\begin{equation*}
V(k,0)(0) = \{ v \in V(k,0)\,|\, h(0) v = 0 \}
\end{equation*}
between $\CN$ and $V(k,0)$ 
was also considered in \cite{DLWY}. 
The set of vectors of the form \eqref{eq:V-basis} with $q=r$ is a basis of 
$V(k,0)(0)$ and $V(k,0)(0) = M_{\widehat{\h}}(k,0) \otimes \CN$ \cite[(2.5)]{DLY}. 
Although our main concern is the parafermion vertex operator algebra 
$\CW = \CN/\CI$, we need $V(k,0)(0)$ in our argument.

We should mention that the notation here is slightly different from that 
in \cite{DLWY, DLY}. In fact, $\CN$,  $\CW$ and $\CI$ are denoted by 
$N_0$, $K_0 = M^{0,0}$ and $\widetilde{\CI}$, respectively in \cite{DLWY, DLY} 
(see \cite[Theorems 3.1, 4.1 and 4.2]{DLWY}). 
The conformal vector $W^2$ of  $\CW$ is denoted by $\omega$ in 
\cite{DLWY, DLY}. 

For a positive number $\xi$, the symbol $[\xi]$ denotes the largest
integer which does not exceed $\xi$.

\section{Basic properties of Zhu's $C_2$-algebra}\label{sec:basic_properties_C2}

In this section we discuss basic properties of Zhu's $C_2$-algebra
\begin{equation*}
R_V = V/C_2(V)
\end{equation*}
of a vertex operator algebra $V$, which 
has  a commutative Poisson algebra structure by the operations 
$a \cdot b = a_{-1} b$ mod $C_2(V)$ and $\{a,b\} = a_0 b$ mod $C_2(V)$ 
\cite[Section 4.4]{Zhu}. 
The following properties are well-known.

\begin{lem}\label{lem:basic_C2}
$(1)$ $L(-1)V \subset C_2(V)$.

$(2)$ $a_n C_2(V) \subset C_2(V)$ for $n \le 0$, $a \in V$.

$(3)$ $a_n V \subset C_2(V)$ for $n \le 0$, $a \in C_2(V)$.

$(4)$ $a_1 b \equiv b_1 a \pmod{C_2(V)}$ for $a,b \in V$.
\end{lem}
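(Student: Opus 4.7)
The plan is to derive each of the four assertions directly from the three standard identities \eqref{eq:VOA-formula-1}, \eqref{eq:VOA-formula-2} and \eqref{eq:Lm1-derivative}, combined with the creation axiom $u_{-1}\1 = u$. For (1), I would apply \eqref{eq:Lm1-derivative} with $n = -1$ to the vacuum, which reads $(L(-1)v)_{-1}\1 = v_{-2}\1$; the left-hand side equals $L(-1)v$ by creation, so $L(-1)v$ is manifestly of the form $v_{-2}\1 \in C_2(V)$. An auxiliary fact I would extract before moving on is that $u_{-k}v \in C_2(V)$ for every $k \ge 2$, obtained by iterating the relation $u_{-k}v = \frac{1}{k-1}(L(-1)u)_{-k+1}v$ that is read off from \eqref{eq:Lm1-derivative}; equivalently, $L(-1)^\ell V \subset C_2(V)$ for all $\ell \ge 1$ by iterating (1).

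For (2), given a generator $b_{-2}c$ of $C_2(V)$ I would apply \eqref{eq:VOA-formula-1} to obtain
\begin{equation*}
a_n(b_{-2}c) = b_{-2}(a_n c) + \sum_{i \ge 0}\binom{n}{i}(a_i b)_{n-i-2}c.
\end{equation*}
The first summand is in $C_2(V)$ by definition, while each term in the sum has mode index $n - i - 2 \le -2$ (since $n \le 0$ and $i \ge 0$) and hence lies in $C_2(V)$ by the auxiliary fact above. For (3) I would expand $(b_{-2}c)_n d$ via \eqref{eq:VOA-formula-2}. Since $\binom{-2}{i} = (-1)^i(i+1)$, the signs collapse and
\begin{equation*}
(b_{-2}c)_n d = \sum_{i \ge 0}(i+1)\bigl(b_{-2-i}(c_{n+i}d) - c_{n-2-i}(b_i d)\bigr).
\end{equation*}
In the first family the mode index $-2-i$ is automatically $\le -2$; in the second family $n - 2 - i \le -2$ because $n \le 0$, so every summand lies in $C_2(V)$ by the auxiliary fact.

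For (4), I plan to invoke skew-symmetry in the form $Y(a,z)b = e^{zL(-1)}Y(b,-z)a$, which after extracting the coefficient of $z^{-2}$ yields the finite expansion
\begin{equation*}
a_1 b = b_1 a - L(-1)(b_2 a) + \tfrac{1}{2}L(-1)^2(b_3 a) - \cdots .
\end{equation*}
Every term after $b_1 a$ lies in $L(-1)V + L(-1)^2 V + \cdots \subset C_2(V)$ by (1) applied iteratively, so $a_1 b \equiv b_1 a \pmod{C_2(V)}$. I do not expect a real obstacle anywhere; the only point requiring a moment of care is the auxiliary identity $u_{-k}v \in C_2(V)$ for all $k \ge 2$, which broadens $C_2(V)$ beyond the literal $k = 2$ case in its definition and which is then used repeatedly in parts (2) and (3).
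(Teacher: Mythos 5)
Your proof is correct. The paper offers no proof of this lemma, simply declaring the four properties ``well-known,'' and your argument is the standard one it implicitly relies on: part (1) from $(L(-1)v)_{-1}\1 = v_{-2}\1$, the auxiliary fact $u_{-k}v \in C_2(V)$ for $k \ge 2$ via $u_{-k} = \tfrac{1}{(k-1)!}(L(-1)^{k-2}u)_{-2}$, parts (2) and (3) from the commutator and iterate formulas with all resulting mode indices $\le -2$, and part (4) from skew-symmetry with the higher terms absorbed into $L(-1)V \subset C_2(V)$.
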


\begin{lem}\label{lem:a1_derivation}
Let $a \in V$ be such that $a_0 V \subset C_2(V)$. 
Then $a_1 C_2(V) \subset C_2(V)$ and 
$a_1 (b_{-1} c) \equiv (a_1 b)_{-1} c + b_{-1}(a_1 c)$ mod $C_2(V)$ for $b, c \in V$. 
That is, $a_1$ induces a derivation of $R_V$ with respect to the product $a \cdot b$.
\end{lem}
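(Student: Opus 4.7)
The plan is to use the basic commutator formula \eqref{eq:VOA-formula-1} with $m=1$ together with Lemma \ref{lem:basic_C2}(3), which tells us that $(a_0 b)_n c \in C_2(V)$ whenever $a_0 b \in C_2(V)$ and $n \le 0$.

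First I would verify the derivation identity. Applying \eqref{eq:VOA-formula-1} with $m=1$, $n=-1$ yields
\begin{equation*}
[a_1, b_{-1}] = (a_0 b)_0 + (a_1 b)_{-1},
\end{equation*}
so that
\begin{equation*}
a_1(b_{-1} c) = b_{-1}(a_1 c) + (a_1 b)_{-1} c + (a_0 b)_0 c.
\end{equation*}
By hypothesis $a_0 b \in a_0 V \subset C_2(V)$, and Lemma \ref{lem:basic_C2}(3) applied to $a_0 b \in C_2(V)$ with $n=0$ gives $(a_0 b)_0 c \in C_2(V)$. This is exactly the derivation formula modulo $C_2(V)$.

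Next I would check that $a_1$ preserves $C_2(V)$. A spanning element of $C_2(V)$ has the form $b_{-2} c$, and the same commutator formula with $m=1$, $n=-2$ gives
\begin{equation*}
[a_1, b_{-2}] = (a_0 b)_{-1} + (a_1 b)_{-2},
\end{equation*}
hence
\begin{equation*}
a_1(b_{-2} c) = b_{-2}(a_1 c) + (a_1 b)_{-2} c + (a_0 b)_{-1} c.
\end{equation*}
The first two summands lie in $C_2(V)$ by the very definition of $C_2(V)$, and the third lies in $C_2(V)$ by Lemma \ref{lem:basic_C2}(3), again because $a_0 b \in C_2(V)$ and $-1 \le 0$.

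No step looks genuinely obstructive; the entire argument is a two-line manipulation with \eqref{eq:VOA-formula-1}. The only subtle point worth keeping in mind is that one cannot simply invoke Lemma \ref{lem:basic_C2}(2) to handle $a_1 C_2(V)$ directly, since that lemma only covers the modes $a_n$ with $n \le 0$; this is precisely why the extra hypothesis $a_0 V \subset C_2(V)$ is needed, as it is what makes the correction term $(a_0 b)_{-1} c$ (respectively $(a_0 b)_0 c$) harmless via Lemma \ref{lem:basic_C2}(3).
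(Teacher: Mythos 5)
Your proposal is correct and follows essentially the same route as the paper: both arguments apply the commutator formula \eqref{eq:VOA-formula-1} with $m=1$ and dispose of the correction term $(a_0 b)_{n}c$ (with $n\le 0$) via the hypothesis $a_0 V\subset C_2(V)$ and Lemma \ref{lem:basic_C2}. The paper writes out only the $b_{-2}c$ case and leaves the derivation identity as ``a similar argument,'' which is exactly the computation you carried out explicitly.
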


\begin{proof}
It follows from \eqref{eq:VOA-formula-1} that 
$a_1 b_{-2}c =  (a_0 b)_{-1} c + (a_1 b)_{-2} c + b_{-2}a_1 c$. 
Moreover, $a_0 b \in C_2(V)$ by the assumption and so $(a_0 b)_{-1} c \in C_2(V)$ 
by Lemma \ref{lem:basic_C2}. Hence  $a_1 C_2(V) \subset C_2(V)$. 
Then the operator $a_1$ acts naturally on $R_V$ by 
$a_1 \cdot (b + C_2(V)) = a_1 b + C_2(V)$. 
We can verify the latter assertion by a similar argument as above.
\end{proof}

The conformal vector $\omega$ is a typical example 
which satisfies the assumption of the above lemma, for $\omega_0 = L(-1)$. 

We consider filtrations based on the weight subspaces $V_{(n)}$ of $V$. 
Let $F_p V = \bigoplus_{n \le p} V_{(n)}$, where $V_{(n)} = 0$ if $n < 0$. 
Then $F_{p-1} V \subset F_p V$, $\cap_{p} F_p V = 0$ and $\cup_{p} F_p V = V$. 
That is, the filtration $\{ F_p V \}$ is increasing, separated and exhaustive. 
Let $F_p A(V)$ be the image of $F_p V$ in $A(V)=V/O(V)$ 
\cite[Theorem 2.1.1]{Zhu} and set 
\begin{equation*}
\gr_p A(V) = F_p A(V)/F_{p-1} A(V), 
\end{equation*}
which is isomorphic to $(O(V) + F_p V)/(O(V) + F_{p-1} V)$. Let
\begin{equation*}
\gr A(V) = \bigoplus_{p} \gr_p A(V).
\end{equation*}

Every element of $\gr_p A(V)$ can be  written as 
$a + O(V) + F_{p-1} V$,  $a \in V_{(p)}$. 
For $u \in a + O(V) + F_{p-1} V$ with $a \in V_{(p)}$ and 
$v \in b + O(V) + F_{q-1} V$ with $b \in V_{(q)}$, we have 
\begin{equation}\label{eq:prod_in_gr_AV}
u*v \in a_{-1}b + O(V) + F_{p+q-1}V
\end{equation}
by \eqref{eq:Zhu_binary_ops}. 
Moreover, \cite[Lemma 2.1.3]{Zhu} implies that 
\begin{equation}\label{eq:commutation_in_gr_AV}
u*v - v*u \in a_0 b + O(V) + F_{p+q-2} V.
\end{equation}
In particular, $u*v = v*u$ in $\gr_{p+q} A(V)$ since $\wt a_0 b = p+q-1$. 
Thus $\gr A(V)$ is a $\Z$-graded commutative Poisson algebra 
with respect to the product $u*v$ and the commutation $u*v - v*u$. 
We also have that $\gr A(V) \cong A(V)$ as vector spaces.

As to $R_V=V/C_2(V)$, note that the weight space decomposition 
$C_2(V) = \bigoplus_n C_2(V)_{(n)}$ holds, where 
$C_2(V)_{(n)} = C_2(V) \cap V_{(n)}$. 
This  decomposition induces a natural $\Z$-grading
\begin{equation}\label{Z-grading-RV}
R_V = \bigoplus_p (R_V)_p
\end{equation}
of $R_V $, where 
\begin{equation*}
 (R_V)_p = V_{(p)}/C_2(V)_{(p)}.
\end{equation*}

For $u \in a + C_2(V)_{(p)}$ with $a \in V_{(p)}$ and 
$v \in b + C_2(V)_{(q)}$ with $b \in V_{(q)}$, we have 
\begin{equation}\label{eq:prod_in_gr_RV}
u_{-1} v \in a_{-1} b + C_2(V)_{(p+q)}
\end{equation}
and
\begin{equation}\label{eq:commutation_in_gr_RV}
u_0 v \in a_0 b + C_2(V)_{(p+q-1)}.
\end{equation}
Since $a_{-1} b \in V_{(p+q)}$ and $a_0 b \in V_{(p+q-1)}$, we see that 
$R_V = \bigoplus_p (R_V)_p$ is a $\Z$-graded commutative Poisson algebra 
with respect to the operations $u \cdot v = u_{-1} v$ and $\{ u,v \} = u_0 v$.

\begin{prop}\label{prop:gr_AV_onto_gr_RV}
$C_2(V)_{(p)} \subset O(V) + F_{p-1} V$ for all $p$ and the assignment 
\begin{equation*}
a + C_2(V)_{(p)} \mapsto a + O(V) + F_{p-1} V
\end{equation*}
for $a \in V_{(p)}$ defines a surjective homomorphism 
$R_V \rightarrow \gr A(V)$ of $\Z$-graded Poisson algebras.
\end{prop}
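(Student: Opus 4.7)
The plan is to reduce everything to a single weight-counting observation, after which the remaining assertions fall out from \eqref{eq:prod_in_gr_AV} and \eqref{eq:commutation_in_gr_AV}.

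First, I would establish the key inclusion $C_2(V)_{(p)} \subset O(V) + F_{p-1} V$. Since $C_2(V)$ is spanned by vectors $a_{-2} b$ for homogeneous $a, b$, its weight-$p$ component is spanned by those $a_{-2} b$ with $a \in V_{(n)}$, $b \in V_{(m)}$, and $n + m + 1 = p$, so it suffices to handle such generators. Expanding the definition of $a \circ b$ and isolating the $i = 0$ term gives
\[
a_{-2} b = a \circ b - \sum_{i \ge 1} \binom{n}{i} a_{i-2} b.
\]
For $i \ge 1$, the vector $a_{i-2} b$ has weight $n - (i-2) - 1 + m = p - i < p$, hence lies in $F_{p-1} V$. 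Therefore $a_{-2} b \in O(V) + F_{p-1} V$, proving the inclusion.

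Consequently, the assignment $\phi_p \colon a + C_2(V)_{(p)} \mapsto a + O(V) + F_{p-1} V$ on $(R_V)_p$ is well-defined. Its image is all of $\gr_p A(V)$, because any representative of a class in $(O(V) + F_p V)/(O(V) + F_{p-1} V)$ can be replaced, modulo $F_{p-1} V$, by its weight-$p$ component in $V_{(p)}$. Assembling these graded pieces yields a surjective $\Z$-graded linear map $\phi \colon R_V \to \gr A(V)$.

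Finally, I would check that $\phi$ preserves both operations. For $a \in V_{(p)}$ and $b \in V_{(q)}$, formula \eqref{eq:prod_in_gr_AV} identifies $\phi_p(a) * \phi_q(b)$ with the class of $a_{-1} b \in V_{(p+q)}$, which is exactly $\phi_{p+q}(a \cdot b)$. Likewise \eqref{eq:commutation_in_gr_AV} shows that the induced Poisson bracket on $\gr A(V)$ sends the classes of $a$ and $b$ to the class of $a_0 b \in V_{(p+q-1)}$, matching $\phi_{p+q-1}(\{a, b\})$. The main (indeed only) obstacle is the weight-counting inclusion at the start; once that is secured, compatibility with the product and bracket is essentially a transcription of the formulas already established for $\gr A(V)$.
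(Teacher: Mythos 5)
Your proof is correct and follows essentially the same route as the paper: the same weight computation $\wt a_{i-2}b = p-i$ for $i \ge 1$ gives $a_{-2}b \equiv a\circ b \pmod{F_{p-1}V}$, hence the key inclusion, and the homomorphism property is then read off from \eqref{eq:prod_in_gr_AV} and \eqref{eq:commutation_in_gr_AV} exactly as in the paper.
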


\begin{proof}
The space $C_2(V)_{(p)}$ is spanned by the weight $p$ 
elements of the form $a_{-2} b$ with homogeneous $a, b$. 
For such $a, b$, we have $\wt a_{i-2} b \le p-1$ if $i \ge 1$. 
Hence $a_{-2} b \equiv a \circ b$ mod $F_{p-1} V$ by 
\eqref{eq:Zhu_binary_ops}. Thus $C_2(V)_{(p)} \subset O(V) + F_{p-1} V$  
and the assignment is well-defined. 
Now, \eqref{eq:prod_in_gr_AV}, \eqref{eq:commutation_in_gr_AV}, 
\eqref{eq:prod_in_gr_RV} and \eqref{eq:commutation_in_gr_RV} imply that 
the map $R_V \rightarrow \gr A(V)$ defined by the 
assignment is a homomorphism of  $\Z$-graded Poisson algebras.
\end{proof}

\section{Zhu's $C_2$-algebra $R_{\CN}$ of $\CN$}\label{sec:CN_mod_C2_CN}

Since $\CW = \CN/\mathcal{I}$, we have $C_2(\CW) = (C_2(\CN)+\CI)/\CI$ and
$\CW/C_2(\CW) \cong \CN/(C_2(\CN)+\CI)$.
In this section we study $R_{\CN} = \CN/C_2(\CN)$, which is infinite dimensional. 
Recall that $\{ W^2, W^3, W^4, W^5 \}$ is a set of strong generators for the 
vertex operator algebra $\CN$. Thus $R_{\CN}$ is generated by
$\widetilde{W}^s = W^s + C_2(\CN)$, $s = 2,3,4,5$ as a 
commutative associative algebra, and it is a homomorphic image 
\begin{equation*}
\varphi : \mathbb{C}[x_2,x_3,x_4,x_5] \rightarrow R_{\CN}; \quad
x_s \mapsto \widetilde{W}^s = W^s + C_2(\CN)
\end{equation*}
of a polynomial algebra with four variables $x_2,x_3,x_4,x_5$ 
($\varphi$ is denoted by $\tilde{\rho}$ in \cite{DLY}.)

\begin{lem}\label{lem:wt_theta_condition}
If $v$ is a homogeneous element of  $\CN$ such that 
$\wt v$ is odd and $\theta(v) = v$ or $\wt v$ is even and $\theta(v) = -v$, 
then $v \in C_2(\CN)$.
\end{lem}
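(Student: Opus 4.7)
The strategy is to combine the strong-generation data for $\CN$ with the observation that $\theta$ preserves $C_2(\CN)$. First I would show that modulo $C_2(\CN)$, the spanning set \eqref{eq:normal-form} collapses to the monomials $(W^2_{-1})^p(W^3_{-1})^q(W^4_{-1})^r(W^5_{-1})^s\1$ with $p,q,r,s \ge 0$. Iterating \eqref{eq:Lm1-derivative} yields $a_{-n} = \frac{1}{(n-1)!}(L(-1)^{n-1}a)_{-1}$ for $n \ge 1$, so for $n \ge 2$ and any $b \in \CN$ we have $a_{-n}b = \frac{1}{(n-1)!}(L(-1)^{n-1}a)_{-1}b$; since $L(-1)^{n-1}a \in L(-1)\CN \subset C_2(\CN)$ by Lemma \ref{lem:basic_C2}(1), part (3) of the same lemma gives $a_{-n}b \in C_2(\CN)$. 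Hence any factor $W^s_{-i}$ with $i \ge 2$ may be absorbed into $C_2(\CN)$, and $R_\CN$ is spanned by the images of the monomials above.

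Next, because $\theta \in \Aut \CN$ it stabilizes $C_2(\CN)$ and descends to an involution on $R_\CN$. Since $\theta(W^s) = (-1)^s W^s$, we have
\[
\theta\bigl((W^2_{-1})^p(W^3_{-1})^q(W^4_{-1})^r(W^5_{-1})^s\1\bigr) = (-1)^{q+s}(W^2_{-1})^p(W^3_{-1})^q(W^4_{-1})^r(W^5_{-1})^s\1,
\]
while the weight of this vector is $2p+3q+4r+5s \equiv q+s \pmod 2$. Thus the $\theta$-eigenvalue of each such spanning monomial of weight $n$ is precisely $(-1)^{n}$, and consequently the image of $\CN_{(n)}$ in $R_\CN$ lies entirely in the $(-1)^n$-eigenspace of $\theta$.

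Finally, let $v \in \CN_{(n)}$ be homogeneous with $\theta(v) = (-1)^{n+1}v$. Its image $v + C_2(\CN)$ is simultaneously a weight-$n$ element of $R_\CN$ and a $\theta$-eigenvector with eigenvalue $(-1)^{n+1}$; by the previous paragraph this image must vanish, giving $v \in C_2(\CN)$. The only step demanding any real care is the reduction to monomials with all indices equal to $-1$, which is entirely formal given Lemma \ref{lem:basic_C2}; no serious obstacle is expected.
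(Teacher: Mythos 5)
Your proof is correct and follows essentially the same route as the paper: reduce the spanning set \eqref{eq:normal-form} modulo $C_2(\CN)$ to the monomials with all indices equal to $1$, and observe that for those the $\theta$-eigenvalue $(-1)^{q+s}$ equals $(-1)^{\wt}$. Your packaging via the induced involution on $R_\CN$ and the $L(-1)$-identity is just a slightly more explicit version of the paper's one-line reduction, so there is nothing substantive to add.
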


\begin{proof}
Since $\CN$ is spanned by the vectors of the form \eqref{eq:normal-form} 
and since those vectors are eigenvectors for $\theta$ with eigenvalue 
$\pm 1$, we may assume that $v$ is of the form \eqref{eq:normal-form}. 
Such a vector $v$ is contained in $C_2(\CN)$ unless 
$i_1, \ldots, i_p$, $j_1, \ldots, j_q$, $m_1, \ldots, m_r$, 
$n_1, \cdots, n_s$ are all $1$. Thus it is sufficient to discuss the case 
$v = (W^2_{-1})^p  (W^3_{-1})^q  (W^4_{-1})^r  (W^5_{-1})^s \1$. 
In this case $\wt v = 2p + 3q + 4r + 5s$ and $\theta(v) = (-1)^{q+s} v$. 
Hence $\theta(v) = v$ if and only if $\wt v$ is even. This proves the lemma.
\end{proof}

The vector $W^r_{2m} W^s$ is of weight $r + s - 2m -1 $ and furthermore  
it is an eigenvector for $\theta$ with eigenvalue $(-1)^{r+s}$. 
Hence $W^r_{2m} W^s \in C_2(\CN)$, $r,s = 2,3,4,5$ by 
Lemma \ref{lem:wt_theta_condition}. 

\begin{lem}\label{lem:Wr0_CN}
$W^r_0 \CN \subset C_2(\CN)$, $r = 2,3,4,5$.
\end{lem}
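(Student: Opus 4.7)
The plan is to reduce the lemma to a single base computation, namely that $W^r_0 W^s \in C_2(\CN)$ for all $r, s \in \{2, 3, 4, 5\}$, and then propagate this through the spanning set \eqref{eq:normal-form} by an induction that exploits the fact that $W^r_0$ is a derivation on the vertex algebra structure.

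First I would dispatch the base computation by a $\theta$-eigenvalue / weight-parity argument. Since $\theta$ is a vertex operator algebra automorphism with $\theta(W^s) = (-1)^s W^s$, the vector $W^r_0 W^s$ is a $\theta$-eigenvector with eigenvalue $(-1)^{r+s}$, and its weight is $r + s - 1$. Thus when $r + s$ is even the weight is odd and the $\theta$-eigenvalue is $+1$, and when $r + s$ is odd the weight is even and the $\theta$-eigenvalue is $-1$. Lemma \ref{lem:wt_theta_condition} applies in both cases and yields $W^r_0 W^s \in C_2(\CN)$. (For $r = 2$ one could alternatively observe that $W^2_0 = L(-1)$ and invoke Lemma \ref{lem:basic_C2}(1), but the parity argument handles all four values of $r$ uniformly.)

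Next I would use that \eqref{eq:VOA-formula-1} gives $[W^r_0, a_n] = (W^r_0 a)_n$ for every $a \in \CN$ and $n \in \Z$, so $W^r_0$ is a derivation with respect to each mode $a_n$. An induction on the number $m$ of creation operators appearing in a basis vector of the form \eqref{eq:normal-form} then finishes the proof. The case $m = 0$ is trivial because $W^r_0 \1 = 0$. For the inductive step, writing a length-$m$ spanning vector as $v = W^{s_1}_{-i_1} v'$ with $v'$ of length $m - 1$, one obtains
$W^r_0 v = (W^r_0 W^{s_1})_{-i_1} v' + W^{s_1}_{-i_1} (W^r_0 v').$
The second summand lies in $C_2(\CN)$ by Lemma \ref{lem:basic_C2}(2) applied with $n = -i_1 \le -1$ together with the induction hypothesis on $v'$. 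For the first summand, the base computation gives $W^r_0 W^{s_1} \in C_2(\CN)$, so the term is in $C_2(\CN)$ automatically when $i_1 \ge 2$, and is in $C_2(\CN)$ by Lemma \ref{lem:basic_C2}(3) when $i_1 = 1$.

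The argument is essentially mechanical once Lemma \ref{lem:wt_theta_condition} is in hand, so there is no real obstacle; the only point requiring care is remembering to invoke Lemma \ref{lem:basic_C2}(3) to absorb the $i_1 = 1$ case of the first summand, where $C_2$-membership does not follow from the mode index alone but does follow from the fact that the inserted vector $W^r_0 W^{s_1}$ itself lies in $C_2(\CN)$.
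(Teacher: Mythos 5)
Your proposal is correct and follows essentially the same route as the paper: the base computation $W^r_0 W^s \in C_2(\CN)$ via the weight-parity/$\theta$-eigenvalue argument of Lemma \ref{lem:wt_theta_condition} is exactly what the paper establishes in the paragraph preceding the lemma, and the induction along the spanning set \eqref{eq:normal-form} using $a_0 b_n c = (a_0 b)_n c + b_n a_0 c$ together with Lemma \ref{lem:basic_C2} is the paper's proof, which you have merely written out in more detail (your case split on $i_1$ is not even needed, since Lemma \ref{lem:basic_C2}(3) already covers all $-i_1 \le 0$ at once).
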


\begin{proof}
It is sufficient to show $W^r_0 v \in  C_2(\CN)$ for a vector $v$ of the form 
\eqref{eq:normal-form}. 
We have 
$a_0 b_n c = (a_0 b)_n c + b_n a_0 c$ by \eqref{eq:VOA-formula-1}. 
Now,  $W^r_0 W^s \in C_2(\CN)$ for $s = 2,3,4,5$ and $W^r_0 \1 = 0$. 
Hence $W^r_0 v \in  C_2(\CN)$ by Lemma \ref{lem:basic_C2}, as required.
\end{proof}

The following lemma is a direct consequence of 
Lemmas \ref{lem:a1_derivation} and \ref{lem:Wr0_CN}.

\begin{lem}\label{Wr1_mod_C2_of_CN}
For $r = 2,3,4,5$, we have 
$W^r_1 C_2(\CN) \subset C_2(\CN)$ and the action of $W^r_1$ on $R_{\CN}$ 
defined by $W^r_1 \cdot (u + C_2(\CN)) = W^r_1 u + C_2(\CN)$ for $u \in \CN$ 
is a derivation on the commutative associative algebra $R_\CN$ generated 
by $\widetilde{W}^s$, $s = 2,3,4,5$. 
\end{lem}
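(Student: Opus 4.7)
The plan is essentially immediate: apply Lemma \ref{lem:a1_derivation} in the ambient vertex operator algebra $V = \CN$ with $a = W^r$ for each $r \in \{2,3,4,5\}$, using Lemma \ref{lem:Wr0_CN} to verify the hypothesis.

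First I would check that $a = W^r$ satisfies the hypothesis of Lemma \ref{lem:a1_derivation}, namely $a_0 \CN \subset C_2(\CN)$. This is exactly the content of Lemma \ref{lem:Wr0_CN}. Consequently the first conclusion of Lemma \ref{lem:a1_derivation} gives $W^r_1 C_2(\CN) \subset C_2(\CN)$, so the prescription $W^r_1 \cdot (u + C_2(\CN)) = W^r_1 u + C_2(\CN)$ is well-defined on $R_\CN$ and linear.

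Next I would derive the Leibniz rule. The second conclusion of Lemma \ref{lem:a1_derivation} states
\begin{equation*}
W^r_1(b_{-1} c) \equiv (W^r_1 b)_{-1} c + b_{-1}(W^r_1 c) \pmod{C_2(\CN)}
\end{equation*}
for all $b, c \in \CN$, which is precisely the Leibniz identity for $W^r_1$ with respect to the product $\widetilde{b} \cdot \widetilde{c} = b_{-1} c + C_2(\CN)$ on $R_\CN$. Since $R_\CN$ is a commutative associative algebra generated by $\widetilde{W}^s$, $s = 2,3,4,5$, this congruence on arbitrary $b,c$ is more than enough: in fact it suffices to verify the derivation property on products of the generators, and the Leibniz rule extends inductively to all monomials in the $\widetilde{W}^s$.

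There is no real obstacle here; the lemma is a direct corollary of the two preceding lemmas, as the paper's own phrasing indicates. The only point worth checking carefully is that the hypothesis of Lemma \ref{lem:a1_derivation} is satisfied for each of the four strong generators, which is guaranteed uniformly by Lemma \ref{lem:Wr0_CN}.
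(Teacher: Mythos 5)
Your proposal is correct and matches the paper exactly: the paper also obtains this lemma as an immediate consequence of Lemma \ref{lem:a1_derivation} applied to $a = W^r$, with the hypothesis $W^r_0 \CN \subset C_2(\CN)$ supplied by Lemma \ref{lem:Wr0_CN}. Nothing further is needed.
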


Since $W^2$ is the conformal vector of $\CN$, the assertions of the above 
two lemmas for $r = 2$ are general facts for a vertex operator algebra. 

As to the singular vector $\bu^0$ of \eqref{eq:singular-vec}, 
we have $\wt \bu^0 = k+1$ and $\theta(\bu^0) = (-1)^{k+1} \bu^0$ \cite[Theorem 4.2]{DLWY}. 
Hence $W^r_{2m} \bu^0 \in C_2(\CN)$, $m \in \Z$, $r = 2,3,4,5$ 
by Lemma \ref{lem:wt_theta_condition}. 
Of course, $W^r_n \bu^0 = 0$, $n \ge r$, for $\bu^0$ is a singular vector of $\CN$.  
Also $W^r_{2m} \bu^0 \in C_2(\CN)$ if $m < 0$ by the definition of $C_2(\CN)$.

The action of $W^3_1$ will play an important role in our argument. 
We know the action of  $W^3_1$ on $\widetilde{W}^s$ by
\cite[ Appendix B]{DLY}.
\begin{lem}\label{lem:action-of-W31-on-Ws}
The action of $W^3_1$ on $\widetilde{W}^s$, $s = 2,3,4,5$ is as follows.
\begin{align*}
W^3_1 \cdot \widetilde{W}^2
&= 3 \widetilde{W}^3,\\
W^3_1 \cdot \widetilde{W}^3
&= \frac{1}{16 k+17}\big( 288 k^3 (k-2) (k+2)^2 (3 k+4)(\widetilde{W}^2)^2
+ 36 k (2 k+3)\widetilde{W}^4 \big),\\
W^3_1 \cdot \widetilde{W}^4
&= \frac{1}{64 k+107}\big( 1248 k^2 (k-3) (k+2) (2 k+1) (2 k+3) \widetilde{W}^2\widetilde{W}^3\\
&\  - 12 k (3 k+4) (16 k+17) \widetilde{W}^5 \big),\\
W^3_1 \cdot \widetilde{W}^5
&= \frac{240 k^4 (k+2)^3 (2 k+3) (3 k+4) (202 k-169)}{16 k+17} (\widetilde{W}^2)^3\\
&\  - 15 k (2 k+3) (41 k+61) (\widetilde{W}^3)^2\\
&\ + \frac{60 k^2 (k+2) (404 k^2+1170 k+835)}{16 k+17} \widetilde{W}^2 \widetilde{W}^4.
\end{align*}
\end{lem}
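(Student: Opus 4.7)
The plan is to handle each of the four formulas by starting from an explicit formula for $W^3_1 W^s$ in $\CN$ and reducing modulo $C_2(\CN)$. The case $s=2$ is essentially free. By Lemma~\ref{lem:basic_C2}(4), $W^3_1 W^2 \equiv W^2_1 W^3 \pmod{C_2(\CN)}$, and since $W^3$ is a Virasoro primary of weight $3$ one has $W^2_1 W^3 = 3W^3$; this yields $W^3_1\cdot\widetilde{W}^2 = 3\widetilde{W}^3$.

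For $s=3,4,5$, I would begin from the explicit expression of $W^3_1 W^s$ as a linear combination of the normal-form vectors \eqref{eq:normal-form} recorded in \cite[Appendix~B]{DLY}. The central observation is the following reduction principle: any normal-form vector \eqref{eq:normal-form} with at least one index strictly greater than $1$ lies in $C_2(\CN)$. Indeed, using $[L(-1),W^r_n] = -n W^r_{n-1}$ inductively, every such vector is expressed in the form $L(-1)(\text{stuff}) + (\text{lower-index terms})$, so Lemma~\ref{lem:basic_C2} parts (1)--(3) place it in $C_2(\CN)$. Consequently only the monomials $(W^2_{-1})^p(W^3_{-1})^q(W^4_{-1})^r(W^5_{-1})^s\1$ survive in $R_\CN$, and they map to $(\widetilde{W}^2)^p(\widetilde{W}^3)^q(\widetilde{W}^4)^r(\widetilde{W}^5)^s$.

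To pin down the coefficients, I would use the $\Z$-grading \eqref{Z-grading-RV}: since $\wt(W^3_1 W^s) = s+1$, the image lies in a short list of monomials, namely $\{(\widetilde{W}^2)^2,\widetilde{W}^4\}$ in weight $4$, $\{\widetilde{W}^2\widetilde{W}^3,\widetilde{W}^5\}$ in weight $5$, and $\{(\widetilde{W}^2)^3,(\widetilde{W}^3)^2,\widetilde{W}^2\widetilde{W}^4\}$ in weight $6$. Reading off, from \cite[Appendix~B]{DLY}, the coefficients of the ``all-index-$-1$'' monomials in the expansion of $W^3_1 W^s$ then produces the stated scalars. No new structural ingredient is required; the remainder is bookkeeping.

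The principal obstacle is purely computational: the formulas in \cite[Appendix~B]{DLY} are long rational functions of $k$, and one must carefully isolate the terms of the form $(W^2_{-1})^p(W^3_{-1})^q(W^4_{-1})^r(W^5_{-1})^s\1$ while discarding everything else into $C_2(\CN)$. As a sanity check, the derivation property of $W^3_1$ recorded in Lemma~\ref{Wr1_mod_C2_of_CN} and the $\theta$-parity constraint of Lemma~\ref{lem:wt_theta_condition} (which forces, e.g., $W^3_1\cdot\widetilde{W}^3$ and $W^3_1\cdot\widetilde{W}^5$ to lie in the even-parity subspace and $W^3_1\cdot\widetilde{W}^2$, $W^3_1\cdot\widetilde{W}^4$ in the odd-parity subspace) both agree with the shape of the stated answers, providing a useful consistency test for the resulting coefficients.
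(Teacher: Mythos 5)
Your proposal is correct and follows essentially the same route as the paper, which obtains the lemma by reading off the expansions of $W^3_1 W^s$ from \cite[Appendix B]{DLY} and discarding, modulo $C_2(\CN)$, every normal-form monomial of \eqref{eq:normal-form} having an index greater than $1$ (your reduction principle via $(L(-1)a)_n = -na_{n-1}$ and Lemma \ref{lem:basic_C2} is exactly the standard justification, and the surviving monomials of weight $s+1\le 6$ are linearly independent, so the coefficients are determined). The case $s=2$ via $W^3_1 W^2 \equiv W^2_1 W^3 = 3W^3$ and your parity/derivation sanity checks are likewise sound.
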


The above lemma implies that $R_\CN$ is generated by 
$(W^3_1)^r \cdot \widetilde{W}^2$, $r = 0,1,2,3$ as an associative algebra.

In \cite[Appendix B]{DLY}, the vectors $W^r_n W^s$, $3 \le r \le s \le 5$, 
$n \ge 0$ are expressed as linear combinations of the vectors of the form 
\eqref{eq:normal-form} with coefficients being rational functions of $k$. 
However, it is hard to calculate directly the action of $W^3_1$ on a vector of 
\eqref{eq:normal-form} by using the formulas 
\eqref{eq:VOA-formula-1} and \eqref{eq:VOA-formula-2}, because 
it is very complicated. 
On the other hand the above argument implies that the action of $W^3_1$ 
can be manageable modulo $C_2(\CN)$, that is, in $R_{\CN}$. 
We will pursue this in the subsequent section.

\section{Embedding of $R_\CN$ in $R_{V(k,0)}$}\label{sec:CN_mod_C2_Weyl_module}

In this section we study $\CN$ modulo $C_2(V(k,0))$. 
Since the set of vectors of the form \eqref{eq:V-basis} is a basis of $V(k,0)$, 
Zhu's $C_2$-algebra $R_{V(k,0)} = V(k,0)/C_2(V(k,0))$ of $V(k,0)$ is 
a polynomial algebra $\mathbb{C}[y_0,y_1,y_2]$ with three variables
\begin{equation}\label{eq:def-y0y1y2}
y_0 = \overline{h(-1)\mathbf {1}}, 
\quad y_1 = \overline{e(-1)\mathbf {1}},
\quad y_2 = \overline{f(-1)\mathbf {1}},
\end{equation}
where $\overline{v} = v + C_2(V(k,0))$ is the image of $v \in V(k,0)$ in $R_{V(k,0)}$.
Since $C_2(\CN) \subset C_2(V(k,0))$, there is a natural homomorphism 
\begin{equation*}
\psi : R_{\CN} \rightarrow R_{V(k,0)}; \quad 
\widetilde{v} = v + C_2(\CN) \mapsto \overline{v} = v + C_2(V(k,0))
\end{equation*}
of commutative Poisson algebras. Let
\begin{equation*}
\CA = \CN +  C_2(V(k,0))
\end{equation*}
be its image and set $y = y_0$, $z = y_1 y_2$. 
Then $\CA \subset \mathbb{C}[y,z]$.
In fact, $\CN \subset V(k,0)(0)$ and the image of $V(k,0)(0)$ in 
$R_{V(k,0)}$ is $\C[y,z]$. 
More precisely, 
$\overline{W^s} = W^s + C_2(V(k,0)) \in R_{V(k,0)}$, $s=2,3,4,5$
are as follows \cite[Appendix A]{DLY}.
\begin{align*}
\overline{W^2} &=
-\frac{1}{2 k (k+2)} (y^2 - 2kz), \\
\overline{W^3} &= 2 (y^3 - 3 k y z),\\
\overline{W^4} &=
-(11 k+6) y^4 + 4 k (11 k+6) y^2 z - 2 k^2 (6 k-5) z^2,\\
\overline{W^5} &=
-2 (19 k+12) y^5 + 10 k (19 k+12) y^3 z - 10 k^2 (10 k-7) y z^2.
\end{align*}
Note that 
$\psi: R_{\CN} \rightarrow \CA$; $W^s + C_2(\CN) \mapsto W^s + C_2(V(k,0))$ 
and the subalgebra $\CA$ of $\C[y,z]$ is generated by 
$\overline{W^s}$, $s=2,3,4,5$.

We assign the weight to each term of $\mathbb{C}[y,z]$ by $\mathrm{wt}\, y = 1$ and
$\mathrm{wt}\, z = 2$. Since the weight of the elements 
$h(-1)\mathbf {1}$ and $e(-1)f(-1)\mathbf {1}$
of $V(k,0)$ is $1$ and $2$, respectively, the weight on  $\mathbb{C}[y,z]$ 
is compatible with that on $V(k,0)$ and so on $R_{V(k,0)}$.

Consider the composition $\psi \circ \varphi$ of the homomorphisms 
$\varphi$ and $\psi$.
\begin{equation*}
\psi \circ \varphi : \mathbb{C}[x_2,x_3,x_4,x_5] \rightarrow R_{\CN}
\rightarrow \CA; \quad
x_s \mapsto \widetilde{W}^s \mapsto \overline{W^s}.
\end{equation*}
The kernel $\Ker \psi \circ \varphi$ of the composition
$\psi \circ \varphi$ is the algebraic relations among
$\overline{W^s}$, $s = 2,3,4,5$ in the polynomial algebra $\mathbb{C}[y,z]$. 
In \cite[Section 2]{DLY}, three polynomials 
$B_0, B_1, B_2 \in \mathbb{C}[x_2,x_3,x_4,x_5]$
corresponding to null fields $\mathbf{v}^0, \mathbf{v}^1,\mathbf{v}^2$ are introduced.
In fact, $\varphi(B_i)$ is a nonzero constant multiple of 
$\widetilde{\mathbf{v}}^i = \bv^i + C_2(\CN)$, 
$i = 0,1,2$. Since $\mathbf{v}^i = 0$ in $\CN$, it follows that $\varphi(B_i) = 0$.

At this stage, using a computer algebra system Risa/Asir, we can verify that
the kernel $\Ker \psi \circ \varphi$ coincides with the ideal
$\langle B_0, B_1, B_2 \rangle$ of $ \mathbb{C}[x_2,x_3,x_4,x_5]$ generated by
$B_0, B_1, B_2$.
This implies that the kernel of $\varphi$ is also $\langle B_0, B_1, B_2 \rangle$
and $\psi$ is an isomorphism. Thus the following theorem holds. 

\begin{thm}\label{thm:embedding_RN_in_RV}
$(1)$ $R_{\CN} \cong \mathbb{C}[x_2,x_3,x_4,x_5]/\langle B_0, B_1, B_2 \rangle$.

$(2)$ $C_2(\CN) = \CN \cap C_2(V(k,0))$ and $R_{\CN} \cong \CA$.
\end{thm}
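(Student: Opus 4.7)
The plan is to establish both statements simultaneously by analyzing the commutative triangle
\begin{equation*}
\C[x_2,x_3,x_4,x_5] \xrightarrow{\varphi} R_\CN \xrightarrow{\psi} \CA \subset \C[y,z]
\end{equation*}
and squeezing $\Ker \varphi$ between $\langle B_0, B_1, B_2 \rangle$ and $\Ker(\psi \circ \varphi)$. First I would record the two easy inclusions: since $B_i$ is constructed in \cite{DLY} so that $\varphi(B_i)$ is a nonzero scalar multiple of the image $\widetilde{\bv}^i$ of the null field $\bv^i \in \CN$, and since $\bv^i = 0$ in $\CN$, one has $\varphi(B_i) = 0$, whence $\langle B_0, B_1, B_2 \rangle \subset \Ker \varphi$. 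The inclusion $\Ker \varphi \subset \Ker(\psi \circ \varphi)$ is automatic from the factorization.

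The decisive step is the reverse inclusion $\Ker(\psi \circ \varphi) \subset \langle B_0, B_1, B_2 \rangle$. Using the explicit expressions for $\overline{W^s}(y,z)$ displayed above, this reduces to computing the kernel of the polynomial map
\begin{equation*}
\C[x_2,x_3,x_4,x_5] \to \C[y,z], \qquad x_s \mapsto \overline{W^s}(y,z),
\end{equation*}
i.e.\ eliminating $y,z$ from the ideal $(x_s - \overline{W^s}(y,z) : s=2,3,4,5)$ in $\C[x_2,\dots,x_5,y,z]$ via a Gr\"obner basis calculation, and then verifying that the resulting elimination ideal is already generated by $B_0, B_1, B_2$. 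This is the main obstacle: it is purely computational and is the step the authors carry out in Risa/Asir. There is no obvious a priori reason that three null-field relations should exhaust the full elimination ideal, and since all coefficients are rational functions in the parameter $k$, the computation must be performed symbolically rather than for a fixed numerical value of $k$.

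Granted this verification, the sandwich
\begin{equation*}
\langle B_0, B_1, B_2 \rangle \subset \Ker \varphi \subset \Ker(\psi \circ \varphi) = \langle B_0, B_1, B_2 \rangle
\end{equation*}
forces all three to coincide, which proves (1). Simultaneously, the snake-lemma identification $\Ker \psi = \Ker(\psi \circ \varphi)/\Ker \varphi = 0$ shows that $\psi \colon R_\CN \to \CA$ is injective; surjectivity onto $\CA$ holds by definition of $\CA$, so $\psi$ is an isomorphism, giving the isomorphism part of (2). Finally, the kernel of the composition $\CN \twoheadrightarrow R_\CN \xrightarrow{\psi} R_{V(k,0)}$ equals $\CN \cap C_2(V(k,0))$ by unwinding the definition of $\psi$ on representatives, and equals $C_2(\CN)$ by the injectivity of $\psi$ just established; this yields $C_2(\CN) = \CN \cap C_2(V(k,0))$ and completes (2).
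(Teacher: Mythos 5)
Your proposal is correct and follows essentially the same route as the paper: the two easy inclusions $\langle B_0,B_1,B_2\rangle \subset \Ker\varphi \subset \Ker(\psi\circ\varphi)$, the computer-algebra verification (the paper uses Risa/Asir) that $\Ker(\psi\circ\varphi)=\langle B_0,B_1,B_2\rangle$, and the resulting sandwich giving both the presentation of $R_\CN$ and the injectivity of $\psi$, from which $C_2(\CN)=\CN\cap C_2(V(k,0))$ follows by unwinding definitions. You also correctly identify the symbolic elimination over the parameter $k$ as the genuinely nontrivial step, which is exactly where the paper delegates to the computer.
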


We make some remarks on null fields. 
Recall that a null field is a nontrivial linear relations among the vectors 
of the form \eqref{eq:normal-form}. 
There are two linearly independent weight $8$ null fields in $\CN$, 
one is fixed by the automorphism $\theta$ of $\CN$ and the other is 
an eigenvector for $\theta$ with eigenvalue $-1$ \cite[Appendix C]{DLY}. 
This latter one is a linear combination of vectors 
of the form \eqref{eq:normal-form} contained in $C_2(\CN)$ 
by Lemma \ref{lem:wt_theta_condition}, and so its image in $R_\CN$ 
does not yield a nontrivial relation in $R_\CN$. 

The null field $\bv^0$ is, up to scalar multiple a unique weight $8$ null field 
which is fixed by $\theta$. 
On the other hand, the choice of the null fields $\bv^1, \bv^2$ 
of \cite[Section 2]{DLY} is not intrinsic. 
Now, consider $W^3_1 \bv^0$ and $(W^3_1)^2 \bv^0$. 
The null field $\bv^0$ is given explicitly in \cite[Appendix C]{DLY}. 
Using Lemma \ref{lem:action-of-W31-on-Ws}, we can calculate 
the image $(W^3_1)^r \bv^0 + C_2(\CN)$ of $(W^3_1)^r \bv^0$ in $R_\CN$. 
The vectors $W^3_1 \bv^0$ and $(W^3_1)^2 \bv^0$ are 
null fields of weight $9$ and $10$, respectively. 
We also define an action of $W^3_1$ on $\C[x_2,x_3,x_4,x_5]$ so that 
it induces a derivation and acts on $x_s$ just as $W^3_1 \cdot \widetilde{W}^s$ 
described in Lemma \ref{lem:action-of-W31-on-Ws}, $s=2,3,4,5$. 
That is, the action of  $W^3_1$ on $\C[x_2,x_3,x_4,x_5]$ is compatible with 
the action on $R_{\CN}$ under the correspondence 
$x_s \leftrightarrow \widetilde{W}^s$. 
Let $T_0 = B_0$, $T_1 = W^3_1 \cdot T_0$ and $T_2 = (W^3_1)^2 \cdot T_0$, 
so that $\varphi (T_r) =  (W^3_1)^r \cdot \widetilde{\bv}^0$. 
We can verify that $B_1$ is a scalar multiple of $T_1$ and 
$B_2$ is a linear combination of $x_2 T_0$ and $T_2$. 
Thus the ideal $\langle T_0, T_1, T_2 \rangle$ of $\C[x_2,x_3,x_4,x_5]$ 
generated by $T_0, T_1, T_2$ coincides with  $\langle B_0, B_1, B_2 \rangle$. 
Although $\CN$ has infinitely many null fields, 
Theorem \ref{thm:embedding_RN_in_RV} implies that 
the three null fields $\bv^0$, $W^3_1 \bv^0$ and $(W^3_1)^2 \bv^0$ 
are sufficient modulo $C_2(\CN)$.

We replace $\overline{W^s}$ with simpler polynomials in $\C[y,z]$, $s = 2,3,4,5$,
namely, we set
\begin{equation}\label{eq:def-gs}
g_2 = y^2 - 2kz, \quad  g_3 = y^3 - 3kyz, \quad
g_4 = z^2, \quad g_5 = yz^2.
\end{equation}
Then $g_s$ is a homogeneous polynomial of weight $s$. 
Indeed, we can express $\overline{W^s}$ as follows.
\begin{align*}
\overline{W^2} &= - \frac{1}{2k(k+2)}g_2,\\
\overline{W^3} &= 2g_3,\\
\overline{W^4} &= -(11k+6)g_2^2 + 2k^2(16k+17)g_4,\\
\overline{W^5} &= -2(19k+12)g_2 g_3 + 2k^2(64k+107)g_5.
\end{align*}
Conversely,
\begin{align*}
g_2 &= -2k(k+2)\overline{W^2},\\
g_3 &= \frac{1}{2}\overline{W^3},\\
g_4 &= \frac{2(k+2)^2(11k+6)}{16k+17}(\overline{W^2})^2
+ \frac{1}{2k^2(16k+17)}\overline{W^4},\\
g_5 &= -\frac{(k+2)(19k+12)}{k(64k+107)}\overline{W^2} \cdot \overline{W^3}
+ \frac{1}{2k^2(64k+107)}\overline{W^5}.
\end{align*}

Therefore,  $\CA$ is generated by $g_s$, $s=2,3,4,5$
as a subalgebra of $\C[y,z]$. 
We note that
\begin{align*}
g_2^4 - g_2 g_3^2 - 5 k^2 g_2^2 g_4 + 4 k^4 g_4^2 + 2 k^2 g_3 g_5 &= 0,\\
g_2^3 g_3 - g_3^3 - 5 k^2 g_2 g_3 g_4 + 2 k^2 g_2^2 g_5 - 2 k^4 g_4 g_5 &= 0,\\
g_2^3 g_4 - g_3^2 g_4 - 4 k^2 g_2 g_4^2 + k^2 g_5^2 &= 0
\end{align*}
are the algebraic relations among  $g_2, g_3, g_4, g_5$ in $\C[y,z]$.
That is, the kernel of an algebra homomorphism from the polynomial algebra
$\C[t_2,t_3,t_4,t_5]$ with four variables into $\C[y,z]$ which maps $t_s$ to
$g_s$, $s=2,3,4,5$ is generated by the three polynomials of the left hand side of
the above three equations. 
The kernel of the homomorphism $t_s \mapsto g_s$,  $s=2,3,4,5$ corresponds to 
the kernel of $\psi \circ \varphi: x_s \mapsto \overline{W^s}$,  $s=2,3,4,5$ by 
$g_s \leftrightarrow \overline{W^s}$. The algebraic relations among  
$g_2, g_3, g_4, g_5$ were calculated by the computer algebra system Risa/Asir. 

We can verify that the terms $g_2^p g_3^q g_4^r g_5^s$ of weight 
at most $7$ are linearly independent polynomials in $\mathbb{C}[y,z]$. 
We also have
\begin{align*}
z^2 &= g_4, \\
yz^2 &= g_5, \\
z^3 &= \frac{3}{2k}g_2 g_4 - \frac{1}{2k^3}(g_2^3 - g_3^2), \\
yz^3 &= \frac{1}{k}(g_2 g_5 - g_3 g_4).
\end{align*}

Denote by $\C[y,z]_{(n)}$ and $\CA_{(n)}$ the weight $n$ subspaces of
$\C[y,z]$ and $\CA$ with respect to the weight defined by
$\wt y = 1$ and $\wt z = 2$. 
Recall that $[n/2]$ is the largest integer which does not exceed $n/2$. 

\begin{lem}\label{lem:basis_CA}
$(1)$ $\dim \C[y,z]_{(n)}  = [n/2] + 1$ with $\{y^{n-2j}z^j;\, 0 \le j \le [n/2]\}$
a basis of $\C[y,z]_{(n)}$ for $n \ge 0$.

$(2)$ $\CA_{(0)} = \C$, $\CA_{(1)} = 0$, and
$\dim \CA_{(n)} = [n/2]$ with $\{y^n-nky^{n-2}z, y^{n-2j}z^j;\, 2 \le j \le [n/2]\}$
a basis of $\mathcal{A}_{(n)}$ for $n \ge 2$. 
In particular, $\C[y,z]_{(n)}  = \C y^n \oplus \CA_{(n)}$ for $n \ge 1$.
\end{lem}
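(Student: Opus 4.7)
Part (1) is immediate: with the grading $\wt y = 1$, $\wt z = 2$, the monomials $y^{n-2j}z^j$ for $0 \le j \le [n/2]$ are precisely the weight-$n$ monomials in $\C[y,z]$ and are linearly independent. The substance lies in (2). The cases $\CA_{(0)} = \C$ and $\CA_{(1)} = 0$ are obvious, since $\CA$ is generated by the $g_s$, all of weight $\ge 2$. For $n \ge 2$ my plan is to prove the sharper identity $\CA_{(n)} = V_n$, where
\[
V_n = \bigg\{\sum_{j=0}^{[n/2]} c_j\, y^{n-2j}z^j \in \C[y,z]_{(n)} \,\bigg|\, c_1 = -nk\,c_0 \bigg\}.
\]
Since $V_n$ is visibly $[n/2]$-dimensional with basis $\{y^n - nky^{n-2}z\} \cup \{y^{n-2j}z^j : 2 \le j \le [n/2]\}$, and since $V_n \cap \C y^n = 0$ (the conditions $c_1 = 0$ and $c_1 = -nk c_0$ force $c_0 = 0$ as $k \ne 0$), the equality $\CA_{(n)} = V_n$ will deliver every assertion of (2), including the direct sum $\C[y,z]_{(n)} = \C y^n \oplus \CA_{(n)}$ by dimension.

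For the inclusion $\CA_{(n)} \subset V_n$, I would inspect an arbitrary generator monomial $g_2^a g_3^b g_4^c g_5^d$ of weight $n = 2a+3b+4c+5d$. Because $g_4 = z^2$ and $g_5 = yz^2$ each carry $z$-degree at least $2$, the factor $g_4^c g_5^d$ has $z$-degree $2c+2d$, so only $c = d = 0$ monomials can contribute to the coefficient of $y^n$ (which has $z$-degree $0$) or of $y^{n-2}z$ (which has $z$-degree $1$). For those, the binomial expansions of $g_2^a = (y^2-2kz)^a$ and $g_3^b = (y^3-3kyz)^b$ give $1$ on $y^n$ and $-2ak - 3bk = -nk$ on $y^{n-2}z$. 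The ratio $-nk$ therefore holds on every weight-$n$ generator monomial and extends by linearity.

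For the reverse inclusion $\CA_{(n)} \supset V_n$, the auxiliary claim I would establish first is that $y^m z^j \in \CA$ for all $m \ge 0$ and $j \ge 2$. The base $m = 0, 1$ combines the identifications $z^2 = g_4$ and $yz^2 = g_5$, the formulas for $z^3$ and $yz^3$ already displayed in the excerpt, and the relations $z^j = g_4 \cdot z^{j-2}$, $yz^j = g_5 \cdot z^{j-2}$ for $j \ge 4$. The inductive step in $m$ rests on $y^2 = g_2 + 2kz$, rewriting
\[
y^m z^j = g_2 \cdot y^{m-2} z^j + 2k\, y^{m-2} z^{j+1},
\]
which stays in $\CA$ by the inductive hypothesis. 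Granted this, for $n \ge 2$ choose $a, b \ge 0$ with $2a + 3b = n$ (possible, since $1$ is the only non-representable non-negative integer); then $g_2^a g_3^b \in \CA_{(n)}$ has the form $(y^n - nk y^{n-2}z) + \sum_{j \ge 2} c_j\, y^{n-2j}z^j$ by the coefficient analysis above, and subtracting the individual $c_j y^{n-2j}z^j \in \CA$ yields $y^n - nk y^{n-2}z \in \CA_{(n)}$. Together with the $y^{n-2j}z^j$ for $2 \le j \le [n/2]$, this gives $[n/2]$ linearly independent elements, matching the upper bound and forcing equality with $V_n$.

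The one nontrivial step I anticipate is the auxiliary claim $y^m z^j \in \CA$, which hinges on the identity $y^2 = g_2 + 2kz$ to descend in the $y$-exponent; without such a device it is awkward to lift pure monomials into $\CA$, because the generators $g_2, g_3, g_4, g_5$ carry no $y^m$-only components for $m \ge 1$. Everything else is elementary polynomial bookkeeping.
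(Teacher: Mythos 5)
Your proof is correct and follows essentially the same route as the paper: you establish $y^m z^j \in \CA$ for $j \ge 2$ via the identity $y^m z^j = g_2\, y^{m-2}z^j + 2k\, y^{m-2}z^{j+1}$ (exactly the paper's device) and then extract $y^n - nky^{n-2}z$ from the expansion of $g_2^a g_3^b$, which is precisely what the paper means by ``the assertion (2) follows from the expansion of the term $g_2^p g_3^q$.'' Your explicit characterization $\CA_{(n)} = V_n$ via the linear condition $c_1 = -nkc_0$ is just a more detailed write-up of the same argument.
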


\begin{proof}
The assertion (1) is clear from the definition of weight on $\C[y,z]$.
Since $\CA$ is generated by $g_2, g_3, g_4, g_5$, we see that $\CA$ contains $z^j$ and
$yz^j$ for $j \ge 2$. Moreover, $y^i z^j = y^{i-2} z^j g_2 + 2k y^{i-2}z^{j+1}$ 
for $i \ge 2$ and so
$y^i z^j \in \CA$ for $i \ge 0$, $j \ge 2$. Then the assertion (2) follows from
the expansion of the term $g_2^p g_3^q$.
\end{proof}

We define an action $W^3_1 \cdot f(y,z)$ of $W^3_1$ on
$f(y,z) \in \mathcal{A}$. Note that
$C_2(V(k,0))$ is not invariant under the operator $W^3_1$. For instance, 
\begin{equation*}
\begin{split}
W^3_1 e(-2)\1 
&= - 3(5k^2 - 6k - 16) h(-2) e(-1)\1 - 3(7k^2 - 2k - 8) h(-1) e(-2)\1\\
& \quad + 6(k+2) h(-1)^2 e(-1)\1 - 12k e(-1)^2 f(-1)\1\\
& \quad  + 6k(k-2)(5k+8) e(-3)\1 
\end{split}
\end{equation*}
is not contained in $C_2(V(k,0))$. 
Since this is the case, we first consider the action of $W^3_1$ on $R_\CN$ as discussed 
in Section \ref{sec:CN_mod_C2_CN} and then transform it to  $\mathcal{A}$ by
the isomorphism $\psi$. 
Thus the action of $W^3_1$ on $\overline{v}$ is 
$W^3_1 \cdot \overline{v} = \psi(W^3_1 \cdot \widetilde{v})$ 
for $v \in \CN$, that is, 
\begin{equation*}
W^3_1 \cdot (v + C_2(V(k,0))) = W^3_1 v + C_2(V(k,0)).
\end{equation*}
In particular, 
$W^3_1 \cdot \overline{W^s} = \psi(W^3_1 \cdot \widetilde{W}^s)$, $s = 2,3,4,5$. 

The following lemma is a consequence of 
Lemma \ref{lem:action-of-W31-on-Ws}
and the relations between
$\overline{W^2}$, $\overline{W^3}$, $\overline{W^4}$, $\overline{W^5}$
and $g_2$, $g_3$, $g_4$, $g_5$.
\begin{lem}\label{lem:W3_1_on_gs}
$W^3_1$ acts on $g_s$, $s=2,3,4,5$ as follows.
\begin{align*}
W^3_1 \cdot g_2 &= -12 k (k+2) g_3,\\
W^3_1 \cdot g_3 &= -18 k (k+2) g_2^2 + 36 k^3 (2 k+3) g_4,\\
W^3_1 \cdot g_4 &= -12 k (3 k+4) g_5,\\
W^3_1 \cdot g_5 &= \frac{6 (7 k+9)}{k} (g_2^3 - g_3^2) - 6 k (28 k+37) g_2 g_4.
\end{align*}
\end{lem}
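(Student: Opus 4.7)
The plan is to reduce the computation to Lemma~\ref{lem:action-of-W31-on-Ws} via the derivation property. By Lemma~\ref{Wr1_mod_C2_of_CN}, $W^3_1$ induces a derivation on $R_\CN$, and since $\psi : R_\CN \to \CA$ is an algebra isomorphism (Theorem~\ref{thm:embedding_RN_in_RV}), the transported action of $W^3_1$ on $\CA$ is a derivation of the commutative algebra $\CA \subset \C[y,z]$. It therefore suffices to express each $g_s$ as a polynomial in $\overline{W^2}, \overline{W^3}, \overline{W^4}, \overline{W^5}$ via the inversion formulas recorded just before the statement, apply the derivation using Lemma~\ref{lem:action-of-W31-on-Ws}, and then rewrite the answer back in the $g_s$-basis using the forward formulas.

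For $s = 2$ the computation is one line: $W^3_1 \cdot g_2 = -2k(k+2)\, W^3_1 \cdot \overline{W^2} = -6k(k+2)\,\overline{W^3} = -12k(k+2)\, g_3$. For $s = 3$, starting from $g_3 = \tfrac12 \overline{W^3}$ and applying Lemma~\ref{lem:action-of-W31-on-Ws} yields a combination of $(\overline{W^2})^2$ and $\overline{W^4}$. After substituting $\overline{W^2} = -\tfrac{1}{2k(k+2)} g_2$ and $\overline{W^4} = -(11k+6)g_2^2 + 2k^2(16k+17)g_4$, the coefficient of $g_4$ collapses directly to $36k^3(2k+3)$, while the coefficient of $g_2^2$ simplifies to $-18k(k+2)$ thanks to the factorization $16k^2 + 49k + 34 = (k+2)(16k+17)$.

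For $s = 4$ and $s = 5$ we invoke the Leibniz rule, since $g_4$ and $g_5$ are quadratic in the $\overline{W^r}$. Each application of Lemma~\ref{lem:action-of-W31-on-Ws} produces a linear combination of weight $6$, resp.\ weight $7$, monomials in $\overline{W^2}, \overline{W^3}, \overline{W^4}, \overline{W^5}$; these are then converted back to monomials in $g_2, g_3, g_4, g_5$ using the same inversion formulas. The main obstacle is purely the algebraic bookkeeping: the rational-in-$k$ coefficients of the form $\tfrac{1}{16k+17}$ and $\tfrac{1}{64k+107}$ introduced by Lemma~\ref{lem:action-of-W31-on-Ws} must telescope to the clean polynomial coefficients displayed in the statement, exactly as witnessed already in the $s = 3$ case. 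This telescoping can be verified by careful hand computation or, more safely, by the computer algebra system Risa/Asir already employed in the proof of Theorem~\ref{thm:embedding_RN_in_RV}.
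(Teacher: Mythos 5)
Your proposal is correct and follows essentially the same route as the paper, which derives this lemma directly from Lemma \ref{lem:action-of-W31-on-Ws} together with the conversion formulas between $\overline{W^s}$ and $g_s$, using that $W^3_1$ acts as a derivation on $\CA$ via the isomorphism $\psi$. The computations you sketch do close up as claimed (e.g.\ the factorization $16k^2+49k+34=(k+2)(16k+17)$ in the $s=3$ case, and the cancellation of the $g_2g_3$ terms in the $s=4$ case), so nothing further is needed.
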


We also note that $W^3_1$ acts on a polynomial in $g_s$, $s=2,3,4,5$ as a derivation,
since it acts on a polynomial in $\widetilde{W}^s$, $s=2,3,4,5$ similarly.

Define a differential operator $D$ on $\mathbb{C}[y,z]$ by
\begin{equation}\label{eq:def-D}
D = \big( (k+2)y^2 - 2kz \big) \frac{\partial}{\partial y} + (3k+4)yz \frac{\partial}{\partial z}.
\end{equation}

\begin{prop}\label{prop:D_and_W3p1}
The restriction of the action of $-6kD$ to $\mathcal{A}$ coincides with the action of
$W^3_1$ on  $\mathcal{A}$.
\end{prop}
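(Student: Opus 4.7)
The plan is to exploit that both operators in question are derivations on $\CA$ and that $\CA$ is generated as a commutative associative algebra by the four polynomials $g_2, g_3, g_4, g_5$ of \eqref{eq:def-gs}. Since a derivation is determined by its values on a generating set, it suffices to verify that $-6kD$ and $W^3_1$ agree on $g_2, g_3, g_4, g_5$.

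First I would make precise the derivation property on both sides. The operator $D$ is a first-order differential operator on the polynomial ring $\C[y,z]$, hence acts as a derivation on $\C[y,z]$, and in particular its restriction to the subalgebra $\CA$ is a derivation of $\CA$ (it does not matter whether $D$ preserves $\CA$ pointwise; we only need the Leibniz rule on products that land in $\CA$). On the other side, Lemma \ref{Wr1_mod_C2_of_CN} says that $W^3_1$ induces a derivation on $R_\CN$ with respect to the associative product, and via the isomorphism $\psi:R_\CN \xrightarrow{\sim} \CA$ of Theorem \ref{thm:embedding_RN_in_RV}(2) this transports to a derivation of $\CA$ (viewed as a subalgebra of $\C[y,z]$, since $\psi$ is an algebra homomorphism and the product on $R_{V(k,0)}$ is the polynomial product).

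Next I would carry out the verification on generators. Using \eqref{eq:def-gs} and \eqref{eq:def-D}, a direct computation gives
\begin{align*}
D(g_2) &= 2(k+2)(y^3 - 3kyz) = 2(k+2)g_3,\\
D(g_3) &= 3(k+2)(y^4 - 2ky^2z) - 3k(3k+4)y^2 z + 6k^2(3k+4)z^2,
\end{align*}
which after collecting terms and using $y^2 = g_2 + 2kz$ can be rewritten as $3(k+2)g_2^2 - 6k^2(2k+3)g_4$; and similarly for $g_4, g_5$. Comparing the result of applying $-6k D$ to each $g_s$ with the formulas for $W^3_1\cdot g_s$ listed in Lemma \ref{lem:W3_1_on_gs} will give the four required identities. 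For example, the computation above already yields $-6kD(g_2) = -12k(k+2)g_3$, matching the first line of Lemma \ref{lem:W3_1_on_gs}.

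The main obstacle is purely computational: verifying the four coincidences, especially $W^3_1 \cdot g_5$, where the expression involves the cubic combination $g_2^3 - g_3^2$ and one must use the identity $y^2 = g_2 + 2kz$ (equivalently $g_3^2 - g_2^3 = 3k y^2 z (3kz - 2y^2) + \cdots$, reorganized through the polynomial relations among the $g_s$) to bring $D(g_5)$ into the form given in Lemma \ref{lem:W3_1_on_gs}. Once these four matches are established, the fact that $W^3_1$ and $-6kD$ are derivations of $\CA$ agreeing on a generating set of the commutative associative algebra $\CA$ forces them to coincide on all of $\CA$, completing the proof.
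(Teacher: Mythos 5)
Your proposal is correct and takes essentially the same route as the paper: the paper's (one-line) proof is precisely to compute $D$ on the generators $g_2, g_3, g_4, g_5$ of \eqref{eq:def-gs} and compare with Lemma \ref{lem:W3_1_on_gs}, the derivation property of $W^3_1$ on $\CA$ having been recorded just before the definition of $D$. (Your displayed intermediate expression for $D(g_3)$ contains a small transcription slip, but the collected form $3(k+2)g_2^2 - 6k^2(2k+3)g_4$ is correct, so nothing in the argument is affected.)
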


\begin{proof}
Calculate the action of $D$ on the polynomials $g_s$, $s=2,3,4,5$ of \eqref{eq:def-gs} 
and compare it with Lemma \ref{lem:W3_1_on_gs}. Then we obtain the assertion. 
\end{proof}

Since $W^3$ lies in the commutant $\CN$ of the Heisenberg vertex operator algebra 
$M_{\widehat{\h}}(k,0)$, it follows that $W^3_1 h(-1)\1 = h(-1) W^3_1 \1 = 0$. 
On the other hand, the action of $D$ on $y = \overline{h(-1)\1}$ is nonzero. 
We note that the differential operator $D$ is obtained by means of the action of $W^3_1$ 
on $R_{\CN}$ and the isomorphism $\psi$.

\section{Singular vector and ideals of $\mathcal{A}$}\label{sec:singular_vec_and_CA}

In this section we study the image $\overline{\mathbf{u}^0} = \mathbf{u}^0 + C_2(V(k,0))$ 
of the singular vector $\mathbf{u}^0$ defined by \eqref{eq:singular-vec} 
in $R_{V(k,0)} = V(k,0)/C_2(V(k,0))$. 
We also consider $(W^3_1)^r \cdot \overline{\mathbf{u}^0}$, $r =1,2,3$.

For $s \ge 2$ and $1 \le j \le [s/2]$, set
\begin{equation}\label{eq:def-Qsj}
Q(s,j) = \sum_{\substack{1 \le i_1 < i_2 < \cdots < i_j \le s\\
2 \le i_1, 4 \le i_2, \ldots, 2j \le i_j}}
(i_1-1)(i_2-3) \cdots (i_j-2j+1).
\end{equation}

\begin{lem}\label{lem:Qsj}
For $s \ge 2$ and $1 \le j \le [s/2]$, we have
\begin{equation}\label{eq:formula-Qsj}
Q(s,j) = \frac{s !}{2^j (s-2j)! j!}.
\end{equation}
\end{lem}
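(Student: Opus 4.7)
The right-hand side of \eqref{eq:formula-Qsj} can be rewritten as $\binom{s}{2j}(2j-1)!!$, which is the classical count of partial matchings on $\{1,2,\ldots,s\}$ with exactly $j$ edges: choose the $2j$ matched elements in $\binom{s}{2j}$ ways and then pair them up in $(2j-1)!!$ ways. The plan is to show that $Q(s,j)$ counts the same objects, via a direct combinatorial bijection; this will be cleaner than wrestling with the explicit factorials.

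Given a partial matching $\{(a_1,b_1),\ldots,(a_j,b_j)\}$ with $a_\ell<b_\ell$, sort the pairs by their larger endpoint and set $i_\ell = b_\ell$, so $i_1<i_2<\cdots<i_j\le s$. The constraint $i_\ell\ge 2\ell$ is forced because the first $\ell$ pairs occupy at least $2\ell$ elements of $\{1,\ldots,i_\ell\}$, matching exactly the index conditions imposed in \eqref{eq:def-Qsj}. Conversely, for any admissible tuple $(i_1,\ldots,i_j)$ I would count the number of matchings that produce it: the index $a_\ell$ must lie in $\{1,\ldots,i_\ell-1\}$ and avoid the $2(\ell-1)$ previously used values $\{a_1,b_1,\ldots,a_{\ell-1},b_{\ell-1}\}$, all of which are less than $i_\ell$. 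This leaves exactly $(i_\ell-1)-2(\ell-1) = i_\ell-(2\ell-1)$ choices for $a_\ell$. Multiplying across $\ell$ yields precisely the summand of \eqref{eq:def-Qsj}, and summing over admissible tuples recovers $Q(s,j)$ as the total number of matchings.

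As a sanity check (or as an alternative, less conceptual proof), one can establish the recurrence
\begin{equation*}
Q(s,j) = Q(s-1,j) + (s-2j+1)\,Q(s-1,j-1)
\end{equation*}
by splitting the definition according to whether $i_j = s$ or $i_j \le s-1$, and then verify by induction on $s$ that $s!/(2^j(s-2j)!j!)$ satisfies the same recurrence, using the identity $(s-2j+1)\binom{s-1}{2j-2} = (2j-1)\binom{s-1}{2j-1}$ together with Pascal's rule, with base cases $j=0$ or $s=2j$ handled directly. No substantive obstacle arises in either approach; the main content of the lemma is just the recognition of the combinatorial meaning of $Q(s,j)$.
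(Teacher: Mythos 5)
Your proof is correct, and both of your arguments differ from the paper's. The paper proves \eqref{eq:formula-Qsj} by induction on $j$: it conditions on the value $t=i_j$ of the last index to get $Q(s,j)=\sum_{t=2j}^{s}(t-2j+1)Q(t-1,j-1)$, applies the induction hypothesis, and evaluates the resulting sum by a telescoping identity. Your primary argument instead identifies $Q(s,j)$ outright as the number of $j$-edge partial matchings of $\{1,\dots,s\}$, recovering the closed form as $\binom{s}{2j}(2j-1)!!$; the key counting step --- that once the larger endpoints $i_1<\cdots<i_j$ are fixed, the smaller endpoint $a_\ell$ has exactly $i_\ell-1-2(\ell-1)=i_\ell-2\ell+1$ admissible positions, since all $2(\ell-1)$ previously used elements lie below $i_\ell$ --- is sound, and the constraint $i_\ell\ge 2\ell$ in \eqref{eq:def-Qsj} is correctly accounted for. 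Your fallback recurrence $Q(s,j)=Q(s-1,j)+(s-2j+1)Q(s-1,j-1)$ (splitting on whether $i_j=s$) is an induction on $s$ rather than on $j$, so it too is a different decomposition from the paper's, and it checks out: multiplying the claimed identity through by $2^j j!(s-2j)!/(s-1)!$ reduces it to $s=(s-2j)+2j$. The bijective route buys a conceptual explanation of why the double-factorial formula appears at all, at the cost of having to set up and verify the bijection carefully; the paper's computation is more mechanical but entirely self-contained.
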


\begin{proof}
We proceed by induction on $j$.
The equation is clear if $j = 1$,
for both side of \eqref{eq:formula-Qsj} are equal to $(s-1)s/2$ in
this case.

Assume that the equation \eqref{eq:formula-Qsj} holds for $j-1$.
Let $t = i_j$. Then $Q(s,j)$ can be written as
\begin{equation*}
Q(s,j) = \sum_{t=2j}^s (t-2j+1)Q(t-1,j-1).
\end{equation*}

By the  induction hypothesis, we have
\begin{equation*}
Q(t-1,j-1) = \frac{(t-1)!}{2^{j-1} (t-2j+1)! (j-1)!}
\end{equation*}
and so
\begin{equation*}
Q(s,j) = \frac{1}{2^{j-1} (j-1)!} \sum_{t=2j}^s (t-2j+1)(t-2j+2) \cdots (t-1).
\end{equation*}
Since
\begin{align*}
& (t-2j+1)(t-2j+2) \cdots (t-1) t -
(t-2j)(t-2j+1)(t-2j+2) \cdots (t-1)\\
&\qquad = 2j  (t-2j+1)(t-2j+2) \cdots (t-1),
\end{align*}
we can calculate that
\begin{equation*}
\sum_{t=2j}^s (t-2j+1)(t-2j+2) \cdots (t-1) 
= \frac{1}{2j} (s-2j+1)(s-2j+2) \cdots (s-1)s
\end{equation*}
Hence \eqref{eq:formula-Qsj} holds for $j$ and the induction is complete.
\end{proof}

Recall that
$R_{V(k,0)} = \C[y_0,y_1,y_2]$ with three variables
$y_0$, $y_1$, $y_2$ defined by \eqref{eq:def-y0y1y2}. 
The subspace $C_2(V(k,0))$ is invariant under the operator $f(0)$, and
in fact $f(0)$ induces a derivation on $\C[y_0,y_1,y_2]$ by the action 
$f(0) \cdot \overline{v} = \overline{f(0)v}$. 
More precisely, the following lemma holds since $[f(0), h(-1)] = 2f(-1)$, 
$[f(0), e(-1)] = -h(-1)$ and $[f(0), f(-1)] = 0$.

\begin{lem}\label{eq:f0-action-1}
For $p, q, r \ge 0$, we have
\begin{align*}
f(0) \cdot y_0^p y_1^q y_2^r
&= \overline{ f(0) h(-1)^p e(-1)^q f(-1)^r \mathbf{1}}\\
&= 2p y_0^{p-1} y_1^q y_2^{r+1}
- q y_0^{p+1} y_1^{q-1} y_2^r.
\end{align*}
\end{lem}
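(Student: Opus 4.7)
The plan is to first verify that $f(0)$ descends to a derivation on the polynomial algebra $R_{V(k,0)} = \C[y_0, y_1, y_2]$, and then read off the asserted formula by computing its action on each of the three generators and invoking the Leibniz rule. The first displayed equality of the lemma will be immediate from the definition of the induced action; the second will be the output of the Leibniz rule.

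For the derivation step, I would observe that $f(0) = (f(-1)\mathbf{1})_0$ is the weight-zero mode of a weight-one element. Plugging $a = f(-1)\mathbf{1}$ and $m = 0$ into \eqref{eq:VOA-formula-1} collapses the sum to a single term, yielding $[f(0), b_n] = (f(0)b)_n$ for every $b \in V(k,0)$ and every $n \in \Z$. Taking $n = -2$ gives $f(0) C_2(V(k,0)) \subset C_2(V(k,0))$, so that the prescription $f(0) \cdot \overline{v} := \overline{f(0) v}$ is a well-defined operator on $R_{V(k,0)}$; taking $n = -1$ shows that this operator satisfies the Leibniz rule with respect to the product $\overline{a} \cdot \overline{b} = \overline{a_{-1} b}$.

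For the calculation on generators, \eqref{eq:affine-commutation} gives $[f(0), h(-1)] = 2 f(-1)$, $[f(0), e(-1)] = -h(-1)$, $[f(0), f(-1)] = 0$, which, combined with $f(0)\mathbf{1} = 0$, produces $f(0) \cdot y_0 = 2 y_2$, $f(0) \cdot y_1 = -y_0$, and $f(0) \cdot y_2 = 0$. Commutativity of $R_{V(k,0)}$ makes $y_0^p y_1^q y_2^r$ equal to $\overline{h(-1)^p e(-1)^q f(-1)^r \mathbf{1}}$ regardless of how the creation operators are ordered, so the first equality of the lemma is just the defining property of the induced action. Applying the Leibniz rule from the first step to this monomial then gives exactly $2p y_0^{p-1} y_1^q y_2^{r+1} - q y_0^{p+1} y_1^{q-1} y_2^r$, which is the second equality.

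The argument is essentially a formality: the only nontrivial ingredient is the VOA commutator identity \eqref{eq:VOA-formula-1}, which lets us recognize $f(0)$ as a weight-preserving derivation of Zhu's $C_2$-algebra $R_{V(k,0)}$. No step presents a genuine obstacle.
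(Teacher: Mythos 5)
Your proof is correct and follows the same route as the paper, which simply observes that $f(0)$ preserves $C_2(V(k,0))$ and acts as a derivation, then reads off the formula from the commutators $[f(0),h(-1)]=2f(-1)$, $[f(0),e(-1)]=-h(-1)$, $[f(0),f(-1)]=0$. Your use of \eqref{eq:VOA-formula-1} with $m=0$ to justify both the $C_2$-invariance and the Leibniz rule is exactly the standard filling-in of the details the paper leaves implicit.
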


The above lemma implies that $f(0)$ acts on $R_{V(k,0)} = \C[y_0,y_1,y_2]$ 
as a differential operator
\begin{equation}\label{eq:def-CD}
2 y_2 \frac{\partial}{\partial y_0} - y_0 \frac{\partial}{\partial y_1}.
\end{equation}

We will calculate 
$\overline{f(0)^s e(-1)^n \1} =
(2 y_2 \frac{\partial}{\partial y_0} - y_0 \frac{\partial}{\partial y_1})^s y_1^n$
for $n \ge 1$ and $0 \le s \le 2n$.
Note that $f(0)^s e(-1)^n \1 = 0$ if $s > 2n$.

\begin{thm}\label{thm:f0-action-mod-C2-general}
For $n \ge 1$ and $0 \le s \le 2n$,
\begin{equation}\label{eq:F1}
\overline{f(0)^s e(-1)^n \1} =
\sum_{j = \max\{0, s-n\}}^{[s/2]} c_j^{(s,n)} y_0^{s-2j} y_1^{n-s+j} y_2^j,
\end{equation}
where
\begin{equation}\label{eq:F2}
c_j^{(s,n)} = (-1)^{s-j}\frac{s!\,n!}{(s-2j)!\,(n-s+j)!\,j!}
\end{equation}
for $\max\{0, s-n\} \le j \le [s/2]$.
Here $[s/2]$ denotes the largest integer which does not exceed $s/2$.
\end{thm}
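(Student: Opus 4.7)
The plan is to translate the theorem into a differential-operator computation on $\C[y_0,y_1,y_2]$ and then evaluate the resulting expression combinatorially. By Lemma \ref{eq:f0-action-1}, the action of $f(0)$ on $R_{V(k,0)} = \C[y_0,y_1,y_2]$ is the derivation $\mathcal{D} = 2 y_2 \partial_{y_0} - y_0 \partial_{y_1}$ displayed in \eqref{eq:def-CD}, so the assertion is equivalent to the polynomial identity $\mathcal{D}^{s}(y_1^n) = \sum_j c_j^{(s,n)} y_0^{s-2j} y_1^{n-s+j} y_2^j$.

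I would expand $\mathcal{D}^s$ as a sum over all sequences of $s$ choices, each choice being either the ``type A'' summand $2 y_2 \partial_{y_0}$ or the ``type B'' summand $-y_0 \partial_{y_1}$. Applied to $y_1^n$, a sequence with $j$ type A steps and $s - j$ type B steps produces the single monomial $y_0^{s-2j} y_1^{n-s+j} y_2^j$ regardless of the ordering, because each type A step shifts exponents by $(-1,0,+1)$ in $(y_0,y_1,y_2)$ and each type B step shifts them by $(+1,-1,0)$. The exponents are nonnegative precisely when $\max\{0, s - n\} \le j \le [s/2]$, which is the range of summation in \eqref{eq:F1}.

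Next I would compute the coefficient attached to each sequence by tracking the $y_0$- and $y_1$-exponents step by step. If the $\ell$-th type A step occurs at position $i_\ell$, for a chosen subset $1 \le i_1 < \cdots < i_j \le s$, then before that step there have been $\ell - 1$ type A's and $i_\ell - \ell$ type B's, so the current $y_0$-exponent is $(i_\ell - \ell) - (\ell - 1) = i_\ell - 2\ell + 1$, and the step contributes $2(i_\ell - 2\ell + 1)$; in particular, this vanishes unless $i_\ell \ge 2\ell$. The $r$-th type B step, independent of its position, contributes $-(n - r + 1)$ since $r-1$ earlier B's have already reduced the $y_1$-exponent from $n$ to $n-r+1$. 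Summing over all orderings gives
\begin{equation*}
(-1)^{s-j} \frac{n!}{(n-s+j)!} \cdot 2^j \sum_{\substack{1 \le i_1 < \cdots < i_j \le s \\ 2\ell \le i_\ell}} \prod_{\ell=1}^{j}(i_\ell - 2\ell + 1) = (-1)^{s-j} \frac{n!}{(n-s+j)!} \cdot 2^j\, Q(s,j),
\end{equation*}
and Lemma \ref{lem:Qsj} rewrites the right-hand side as $(-1)^{s-j} s!\, n!/((s-2j)!(n-s+j)!\,j!) = c_j^{(s,n)}$.

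The main obstacle is the coefficient bookkeeping in the third step, specifically the identification of the $y_0$-exponent just before the $\ell$-th type A step as $i_\ell - 2\ell + 1$, because the two summands of $\mathcal{D}$ alter the $y_0$-exponent in opposite directions and one must combine their interleaved contributions to produce exactly the sum defining $Q(s,j)$. Once this step is set up correctly, Lemma \ref{lem:Qsj} supplies the closed form and the two explicit factorial formulas \eqref{eq:F2} match immediately.
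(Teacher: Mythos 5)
Your proposal is correct and follows essentially the same route as the paper: expand $(2y_2\partial_{y_0}-y_0\partial_{y_1})^s y_1^n$ over all interleavings of the two summands, track the exponents to get the factor $2^j(i_1-1)(i_2-3)\cdots(i_j-2j+1)$ per interleaving, and sum via Lemma \ref{lem:Qsj}. The only cosmetic difference is that you read off which monomials occur from the exponent shifts $(-1,0,+1)$ and $(+1,-1,0)$, whereas the paper deduces this from the weight and $h(0)$-eigenvalue of $f(0)^s e(-1)^n\1$.
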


\begin{proof}
Set $\bv = f(0)^s e(-1)^n \1$ for simplicity of notation.
We first show \eqref{eq:F1}. We examine which terms $y_0^p y_1^q y_2^r$ appear in
the expression of $\overline{\bv}$ as a polynomial in $\C[y_0,y_1,y_2]$. The weight of
$\bv$ is $n$ and so
\begin{equation*}
p+q+r = n.
\end{equation*}

The vector $h(-1)^p e(-1)^q f(-1)^r \mathbf{1}$ is an eigenvector for $h(0)$ with
eigenvalue $2(q-r)$. Since $h(0)\bv = 2(n-s)\bv$, it follows that 
\begin{equation*}
q-r = n-s.
\end{equation*}
Form these two equations we have $q = n-s+r$ and $p=s-2r$. Replace $r$ with $j$.
Then we see that only the terms of the form $y_0^{s-2j} y_1^{n-s+j} y_2^j$ can appear
in the expression of $\overline{\bv}$. Since $s-2j$, $n-s+j$ and $j$ are nonnegative, 
\begin{equation}\label{eq:range-j}
\max\{0, s-n\} \le j \le [s/2]
\end{equation}
and \eqref{eq:F1} holds.

Next, we determine the coefficient $c_j^{(s,n)}$ of $y_0^{s-2j} y_1^{n-s+j} y_2^j$
in \eqref{eq:F1} for a fixed $j$. The polynomial
$\overline{\bv} = (2 y_2 \frac{\partial}{\partial y_0} - y_0 \frac{\partial}{\partial y_1})^s y_1^n$
is obtained by applying the differential operator \eqref{eq:def-CD} $s$ times to $y_1^n$.
In this process, we need to adopt $j$ times the first part
$2 y_2 \frac{\partial}{\partial y_0}$ and $s-j$ times the second part
$- y_0 \frac{\partial}{\partial y_1}$ of \eqref{eq:def-CD} to obtain
$y_0^{s-2j} y_1^{n-s+j} y_2^j$. 
Note that the operators $y_2 \frac{\partial}{\partial y_0}$ and 
$y_0 \frac{\partial}{\partial y_1}$ do not commute. 
Denote by $c_{(i_1,i_2,\ldots,i_j)}^{(s,n)}$ the coefficient of
$y_0^{s-2j} y_1^{n-s+j} y_2^j$ in the case where we adopt
the first part $2 y_2 \frac{\partial}{\partial y_0}$ in the $i_1$-th,
the $i_2$-th, $\ldots,$ the $i_j$-th turn with
\begin{equation}\label{eq:i1-ij-condition-1}
1 \le i_1 < i_2 < \cdots < i_j \le s
\end{equation}
and adopt the second part $- y_0 \frac{\partial}{\partial y_1}$ in the 
remaining $s-j$ turns.

For instance, if $j=0$ then we adopt 
$- y_0 \frac{\partial}{\partial y_1}$ in all the $s$ turns and obtain the monomial
\begin{equation*}
(-1)^s n(n-1) \cdots(n-s+1) y_0^s y_1^{n-s} = (-1)^s \frac{n!}{(n-s)!} y_0^s y_1^{n-s}.
\end{equation*}
That is, $c_0^{(s,n)} = (-1)^s n!/(n-s)!$. 
Note that \eqref{eq:range-j} implies $s \le n$ if $j = 0$.

The second simplest case is that $j=1$. In this case we adopt 
$2 y_2 \frac{\partial}{\partial y_0}$ just one time and adopt  
$- y_0 \frac{\partial}{\partial y_1}$ the remaining $s-1$ times. Suppose
we adopt $2 y_2 \frac{\partial}{\partial y_0}$ in the $i$-th turn.
Since $\frac{\partial}{\partial y_0} y_1^n = 0$,
we assume that $2 \le i \le s$.
Up to the $(i-1)$-th turn we always apply 
$- y_0 \frac{\partial}{\partial y_1}$ to $y_1^n$ and obtain the monomial
\begin{equation*}
(-1)^{i-1} n(n-1) \cdots (n-i+2) y_0^{i-1} y_1^{n-i+1}.
\end{equation*}
In the $i$-th turn we apply $2 y_2 \frac{\partial}{\partial y_0}$
to the above monomial and obtain
\begin{equation*}
(-1)^{i-1} n(n-1) \cdots(n-i+2) \cdot 2(i-1) y_0^{i-2} y_1^{n-i+1}y_2.
\end{equation*}
In the $i+1$-th turn through the $s$-th turn we apply 
$- y_0 \frac{\partial}{\partial y_1}$ to the above monomial. Then we obtain
\begin{equation*}
(-1)^{s-1} n(n-1) \cdots (n-s+2) \cdot 2(i-1) y_0^{s-2} y_1^{n-s+1}y_2.
\end{equation*}
That is,
\begin{equation*}
c_{(i)}^{(s,n)} = (-1)^{s-1} \frac{n!}{(n-s+1)!} \cdot 2(i-1).
\end{equation*}
Hence we have
\begin{equation*}
c_1^{(s,n)} = \sum_{i=2}^s c_{(i)}^{(s,n)} = (-1)^{s-1}\frac{s!\,n!}{(s-2)!\,(n-s+1)!}.
\end{equation*}

We now consider $c_{(i_1,i_2,\ldots,i_j)}^{(s,n)}$ for a general case.
We adopt $2 y_2 \frac{\partial}{\partial y_0}$ in the $i_1$-th,
the $i_2$-th, $\ldots,$ the $i_j$-th turn for $i_1, \ldots, i_j$ with
the condition \eqref{eq:i1-ij-condition-1}
and adopt $- y_0 \frac{\partial}{\partial y_1}$
in the remaining $s-j$ turns.
Just after the $(i_1-1)$-th turn we have the monomial
\begin{equation*}
(-1)^{i_1-1} n(n-1) \cdots (n-i_1+2) y_0^{i_1-1} y_1^{n-i_1+1}.
\end{equation*}
Note that $n-i_1+1 \ge 0$ by the conditions 
\eqref{eq:range-j} and \eqref{eq:i1-ij-condition-1}.

In the $i_1$-th turn we apply $2 y_2 \frac{\partial}{\partial y_0}$
to the term $y_0^{i_1-1} y_1^{n-i_1+1}$.
If $i_1 = 1$, then the operator $\frac{\partial}{\partial y_0}$ transforms the term to $0$. 
Hence we assume that $2 \le i_1$.
By the action of $2 y_2 \frac{\partial}{\partial y_0}$ we obtain
\begin{equation*}
2(i_1-1) y_0^{i_1-2} y_1^{n-i_1+1} y_2.
\end{equation*}

In the $i_1+1$-th turn through the $i_2-1$-th turn we apply 
$- y_0 \frac{\partial}{\partial y_1}$ to the term $y_0^{i_1-2} y_1^{n-i_1+1} y_2$.
Then we obtain
\begin{equation*}
(-1)^{i_2-i_1-1} (n-i_1+1)(n-i_1) \cdots (n-i_2+3) y_0^{i_2-3} y_1^{n-i_2+2} y_2.
\end{equation*}
Note that $n-i_2+2 \ge 0$ by \eqref{eq:range-j} and \eqref{eq:i1-ij-condition-1}.

In the $i_2$-th turn we apply $2 y_2 \frac{\partial}{\partial y_0}$
to the term $y_0^{i_2-3} y_1^{n-i_2+2} y_2$.
If $i_2 = 3$, then the operator $\frac{\partial}{\partial y_0}$ transforms the term to $0$.
Hence we assume that $4 \le i_2$.
By the action of $2 y_2 \frac{\partial}{\partial y_0}$ we obtain
\begin{equation*}
2(i_2-3) y_0^{i_2-4} y_1^{n-i_2+2} y_2^2.
\end{equation*}

We continue the above procedures. Then we eventually have that
\begin{equation*}
c_{(i_1,i_2,\ldots,i_j)}^{(s,n)} =
(-1)^{s-j} \frac{n!}{(n-s+j)!} \cdot
2^j (i_1-1)(i_2-3) \cdots (i_j-2j+1)
\end{equation*}
if $i_1,i_2,\ldots,i_j$ satisfy the conditions
$2 \le i_1$, $4 \le i_2$, $\ldots,$ $2j \le i_j$.
For $i_1,i_2,\ldots,i_j$ which do not satisfy these conditions, we have
$c_{(i_1,i_2,\ldots,i_j)}^{(s,n)} = 0$.

It remains to show the equation \eqref{eq:F2}.
The coefficient $c_j^{(s,n)}$ of $y_0^{s-2j} y_1^{n-s+j} y_2^j$ in
$\overline{\bv}$ is the sum of all $c_{(i_1,i_2,\ldots,i_j)}^{(s,n)}$
with respect to $i_1,i_2,\ldots,i_j$. Therefore,
\begin{align*}
c_j^{(s,n)}
&= \sum_{\substack{1 \le i_1 < i_2 < \cdots < i_j \le s\\
2 \le i_1, 4 \le i_2, \ldots, 2j \le i_j}} c_{(i_1,i_2,\ldots,i_j)}^{(s,n)}\\
&= (-1)^{s-j} \frac{n!}{(n-s+j)!} \cdot 2^j Q(s,j)\\
&=  (-1)^{s-j}\frac{s!\,n!}{(s-2j)!\,(n-s+j)!\,j!}
\end{align*}
by Lemma \ref{lem:Qsj}. The proof of Theorem
\ref{thm:f0-action-mod-C2-general} is complete.
\end{proof}

We now discuss $\overline{\mathbf{u}^0} = \mathbf{u}^0 + C_2(V(k,0)) \in R_{V(k,0)}$ 
for the singular vector
$\mathbf{u}^0$ of \eqref{eq:singular-vec}. 
Recall that $y=y_0$ and $z = y_1 y_2$.
Let
\begin{equation*}
f_0(y,z) = \frac{(-1)^{k+1}}{(k+1)!} \overline{\mathbf{u}^0} \in \mathbb{C}[y,z].
\end{equation*}

The following assertion is a consequence of Theorem \ref{thm:f0-action-mod-C2-general}.

\begin{cor}\label{eq:formula-1}
We have
\begin{equation*}
f_0(y,z) = \sum_{j=0}^{[(k+1)/2]} c_j y^{k+1-2j} z^j
\end{equation*}
with
\begin{equation*}
c_j = (-1)^j \frac{(k+1)!}{(k+1-2j)!\,(j!)^2}.
\end{equation*}
\end{cor}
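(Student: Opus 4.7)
The plan is to deduce the corollary directly from Theorem \ref{thm:f0-action-mod-C2-general} by specializing $s = n = k+1$. Since $\mathbf{u}^0 = f(0)^{k+1} e(-1)^{k+1}\mathbf{1}$, the theorem gives a closed form for $\overline{\mathbf{u}^0} \in \mathbb{C}[y_0, y_1, y_2]$, and the remaining task is simply to rewrite it in terms of $y = y_0$ and $z = y_1 y_2$ and to clean up the sign.

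First I would set $s = n = k+1$ in \eqref{eq:F1}. Since $\max\{0, s-n\} = 0$, the summation index $j$ ranges over $0 \le j \le [(k+1)/2]$, matching the range in the corollary. The exponents become $y_0^{k+1-2j} y_1^{j} y_2^{j}$, and using $y_1 y_2 = z$ together with $y_0 = y$, the monomial collapses to $y^{k+1-2j} z^j$. The coefficient formula \eqref{eq:F2} specializes to
\begin{equation*}
c_j^{(k+1,k+1)} = (-1)^{k+1-j}\frac{(k+1)!\,(k+1)!}{(k+1-2j)!\,j!\,j!}.
\end{equation*}

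Next I would multiply by the normalizing factor $(-1)^{k+1}/(k+1)!$ from the definition of $f_0(y,z)$. The prefactor cancels one copy of $(k+1)!$ and combines the signs via $(-1)^{k+1}\cdot(-1)^{k+1-j} = (-1)^{2(k+1)-j} = (-1)^{-j} = (-1)^j$, producing exactly
\begin{equation*}
c_j = (-1)^j \frac{(k+1)!}{(k+1-2j)!\,(j!)^2}
\end{equation*}
as the coefficient of $y^{k+1-2j} z^j$ in $f_0(y,z)$.

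There is no real obstacle here; the only thing to watch is the sign bookkeeping and the fact that when $s=n=k+1$ the lower bound $\max\{0,s-n\}$ degenerates to $0$, so every $j \in \{0, 1, \ldots, [(k+1)/2]\}$ contributes. The work of the corollary is entirely contained in Theorem \ref{thm:f0-action-mod-C2-general}, whose proof set up the derivation action \eqref{eq:def-CD} of $f(0)$ on $R_{V(k,0)}$ and evaluated it via the combinatorial identity of Lemma \ref{lem:Qsj}; the corollary is just the symmetric diagonal specialization of that general formula.
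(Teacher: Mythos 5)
Your proposal is correct and matches the paper's intent exactly: the paper states the corollary as an immediate consequence of Theorem \ref{thm:f0-action-mod-C2-general}, obtained precisely by setting $s=n=k+1$, identifying $y_0^{k+1-2j}y_1^jy_2^j$ with $y^{k+1-2j}z^j$, and absorbing the prefactor $(-1)^{k+1}/(k+1)!$ into the sign and one factor of $(k+1)!$. Your sign bookkeeping and the observation that the lower summation bound degenerates to $0$ are both accurate.
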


We consider the action of the differential operator $D$
defined by \eqref{eq:def-D}.
For a homogeneous polynomial
\begin{equation*}
f(y,z) = \sum_{j=0}^{[n/2]} a_j y^{n-2j} z^j
\end{equation*}
of weight $n$, we have
\begin{equation*}
D\cdot f(y,z) = \sum_{j=0}^{[(n+1)/2]}
\big( -2k(n+2-2j) a_{j-1} + (k(n+j) + 2n)a_j \big) y^{n+1-2j} z^j,
\end{equation*}
where $a_{-1}$ and $a_{[n/2] + 1}$ are understood to be $0$. 
Let
\begin{equation*}
f_r(y,z) = D^r \cdot f_0(y,z)
\end{equation*}
be the image of $f_0(y,z)$
under the operator $D^r$, $r = 1,2,\ldots.$ Then $f_r(y,z)$ is a constant multiple of 
$ (W^3_1)^r \cdot \overline{\bu^0} = (W^3_1)^r \bu^0 + C_2(V(k,0)) \in \C[y,z]$  
by Proposition \ref{prop:D_and_W3p1}. 
The polynomial $f_r(y,z)$ is homogeneous of weight $k+1+r$.

We can verify that the Jacobian determinant of  $(f_0(y,z), f_1(y,z))$ is nonzero.
\begin{equation*}
\left|
\begin{array}{cc}
\partial f_0/\partial y & \partial f_0/\partial z\\
\partial f_1/\partial y & \partial f_1/\partial z
\end{array}
\right|
\ne 0.
\end{equation*}
Indeed, if $k = 2m-1$ is odd, then $\partial f_0/\partial z$ contains the term $z^{m-1}$ 
and $ \partial f_1/\partial y$ contains the term $z^m$, whereas  
$\partial f_0/\partial y$ and $\partial f_1/\partial z$ are homogeneous polynomials 
of odd weight and so every term in these two polynomials involves an odd exponent of $y$. 
If $k = 2m$ is even, then both $\partial f_0/\partial y$ and $\partial f_1/\partial z$ 
contain the term $z^m$, whereas $\partial f_0/\partial z$ and $\partial f_1/\partial y$ 
are homogeneous of odd weight. 
In either case, the Jacobian determinant is nonzero.

Since the Jacobian determinant of  $(f_0(y,z), f_1(y,z))$ is nonzero, 
$P(f_0(y,z), f_1(y,z)) = 0$ for a polynomial $P(X,Y) \in \C[X,Y]$ only if $P(X,Y) = 0$. 
That is, $f_0(y,z)$ and $f_1(y,z)$ are algebraically independent over $\C$.

We can verify the following lemma by a direct calculation.
\begin{lem}\label{lem:relation_f0f1f2}
$f_2(y,z)$ can be expressed as follows.
\begin{equation*}
f_2(y,z) = p(y,z)f_0(y,z) + q(y)f_1(y,z),
\end{equation*}
where $p(y,z) = -(k+1)(k+2)^2((k+1)y^2+kz)$ and $q(y) = (k+2)(2k+3)y$.
\end{lem}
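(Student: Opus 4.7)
The plan is to verify this polynomial identity by direct coefficient comparison, exploiting the fact that both sides are homogeneous elements of $\C[y,z]$ of weight $k+3$ (since $\wt f_0 = k+1$, $\wt f_1 = k+2$, $\wt p = 2$, and $\wt q = 1$). It therefore suffices to match the coefficient of $y^{k+3-2j}z^j$ on each side for $0 \le j \le [(k+3)/2]$.

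First I would derive, from the definition \eqref{eq:def-D}, the general formula for how $D$ acts on the coefficients of a weight-$n$ homogeneous polynomial: if $f = \sum_j a_j y^{n-2j}z^j$, then $D \cdot f = \sum_j b_j y^{n+1-2j}z^j$ with
\begin{equation*}
b_j = \bigl((k+2)(n-2j) + (3k+4)j\bigr)\, a_j - 2k(n+2-2j)\, a_{j-1}.
\end{equation*}
Applying this to $f_0 = \sum_j c_j y^{k+1-2j}z^j$ with $c_j$ as in Corollary~\ref{eq:formula-1} yields an explicit expression for the coefficients $d_j$ of $f_1 = Df_0$; a second application yields the coefficients $e_j$ of $f_2 = Df_1$ as a linear combination of $c_j, c_{j-1}, c_{j-2}$. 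Meanwhile, the coefficient of $y^{k+3-2j}z^j$ in the right-hand side $pf_0 + qf_1$ is $-(k+1)(k+2)^2\bigl[(k+1)c_j + kc_{j-1}\bigr] + (k+2)(2k+3)\, d_j$, obtained by reading off the contributions of the $y^2$, $z$, and $y$ prefactors in $p$ and $q$.

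Equating the two expressions for $e_j$ and using the two-term recurrence $j^2 c_j + (k+3-2j)(k+2-2j)\, c_{j-1} = 0$, which is immediate from the closed-form formula for $c_j$, reduces the identity to a polynomial identity in $k$ and $j$ that can be verified by routine algebra (essentially, showing that after substitution the coefficients of $c_j$, $c_{j-1}$, and $c_{j-2}$ each vanish, up to the relations coming from the recurrence). The main obstacle is purely bookkeeping: the computation is lengthy and one must carefully handle the boundary indices of $j$, where certain $c_{j'}$ vanish because $f_0$ only has terms with $j' \le [(k+1)/2]$, whereas $f_1$ and $f_2$ can acquire higher-$j$ terms through the $a_{j-1}$ part of the derivative formula. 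This is why the paper simply remarks that the lemma can be verified by direct calculation rather than spelling out the computation.
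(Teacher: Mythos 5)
Your proposal is correct and is essentially the computation the paper has in mind when it says the lemma "can be verified by a direct calculation": your coefficient formula for $D$ agrees with the one displayed in the paper just before the lemma, your recurrence $j^2c_j+(k+3-2j)(k+2-2j)c_{j-1}=0$ follows from Corollary \ref{eq:formula-1}, and spot-checking the $j=0$ and $j=1$ coefficients confirms both sides equal $(k+1)(k+2)^3$ and $-k(k+1)(k+2)^3(k+3)$ respectively. No substantive difference from the paper's (unstated) argument.
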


Let $J$ be the ideal of 
$\mathbb{C}[y,z]$ defined by
\begin{equation*}
J = \mathbb{C}[y,z]f_0(y,z) + \mathbb{C}[y,z]f_1(y,z).
\end{equation*}
Then  
\begin{equation}\label{eq:in-J}
f_r(y,z) \in J\quad\text{ for }r \ge 0
\end{equation}
by the above lemma.

\begin{lem}\label{lem:J-cofinite}
$\dim \mathbb{C}[y,z]/J < \infty$.
\end{lem}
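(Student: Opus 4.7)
The plan is to reduce the statement to showing that the common zero locus $V(f_0, f_1) \subset \C^2$ equals $\{0\}$. Since $J$ is generated by the weighted-homogeneous polynomials $f_0$ (weight $k+1$) and $f_1$ (weight $k+2$), the quotient $\C[y,z]/J$ is a $\Z_{\ge 0}$-graded $\C$-algebra with degree-zero piece $\C$; graded Nakayama combined with Hilbert's Nullstellensatz reduces the finite-dimensionality to the condition $(y, z) \subseteq \sqrt{J}$, equivalently $V(f_0, f_1) = \{0\}$.

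To prove $V(f_0, f_1) = \{0\}$, I suppose $(a, b) \in \C^2$ is a common zero. Applying Euler's identity to the weighted-homogeneous polynomial $f_0$ of weight $k+1$ yields
\[
a\, \partial_y f_0(a, b) + 2b\, \partial_z f_0(a, b) = (k+1) f_0(a, b) = 0,
\]
and combining with $f_1(a, b) = (Df_0)(a, b) = 0$ gives
\[
\bigl((k+2) a^2 - 2kb\bigr) \partial_y f_0(a, b) + (3k+4) ab\, \partial_z f_0(a, b) = 0.
\]
The determinant of this $2 \times 2$ linear system in $\bigl(\partial_y f_0(a, b), \partial_z f_0(a, b)\bigr)$ evaluates to $kb(a^2 + 4b)$. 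If $b = 0$, then $f_0(a, 0) = a^{k+1}$ (since $c_0 = 1$ by Corollary \ref{eq:formula-1}) forces $a = 0$. If $a^2 + 4b = 0$, substitution into $f_0$ combined with the classical identity $\sum_j \binom{n}{2j}\binom{2j}{j}/4^j = \binom{2n}{n}/2^n$ yields $f_0(a, -a^2/4) = a^{k+1} \binom{2(k+1)}{k+1}/2^{k+1}$, which is nonzero unless $a = 0$.

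The hard case is $kb(a^2 + 4b) \ne 0$, where the linear system forces $\partial_y f_0(a, b) = \partial_z f_0(a, b) = 0$, making $(a, b)$ a singular point of the curve $V(f_0)$; this must be ruled out by showing $f_0 \in \C[y, z]$ is squarefree. The weighted-homogeneous irreducible polynomials in $\C[y, z]$ (with $\wt y = 1$, $\wt z = 2$) are exactly $y$, $z$, and $y^2 + \beta z$ for $\beta \in \C^*$. Direct inspection ($c_0 = 1$ and $f_0(y, 0) = y^{k+1}$) shows that neither $y^2$ nor $z$ divides $f_0$, so squarefreeness reduces to showing the univariate polynomial $P(z) := f_0(1, z)$ has only simple roots. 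Writing $P_n(z) := \sum_j (-1)^j \binom{n}{2j}\binom{2j}{j} z^j$ (so $P = P_{k+1}$), I would derive the generating function identity $\sum_{n \ge 0} P_n(z)\, t^n = (1 - 2t + (1+4z)t^2)^{-1/2}$ from the explicit formula for the coefficients, and then observe via the substitution $T = t\sqrt{1 + 4z}$ that the right-hand side becomes the classical Legendre generating function. This gives $P_n(z) = (1 + 4z)^{n/2} P_n^{\mathrm{Leg}}(1/\sqrt{1 + 4z})$; the simple-root property of $P = P_{k+1}$ then follows from the classical fact that the Legendre polynomial $P_n^{\mathrm{Leg}}$ has $n$ distinct real roots in $(-1, 1)$ symmetric about $0$.
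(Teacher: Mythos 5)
Your proposal is correct, and its first half coincides with the paper's own argument: the reduction of the lemma to $V(f_0,f_1)=\{(0,0)\}$, the two linear relations in $(\partial_y f_0,\partial_z f_0)$ coming from the weighted Euler operator $E=y\partial_y+2z\partial_z$ and from $f_1=D\cdot f_0$, the determinant $kb(a^2+4b)$, and the elimination of the loci $b=0$ and $a^2+4b=0$ by evaluating $f_0$ there (the paper gets $f_0(y,-y^2/4)\ne 0$ directly from the positivity of $(-1)^jc_j$, so your central-binomial identity, while correct, is not needed). Where you genuinely diverge is in the main case $kb(a^2+4b)\ne 0$. The paper observes that $D$ and $E$ preserve $J$ and hence define tangent vectors to $X=V(J)$ at every point; at a smooth point of $X$ away from the origin the nonvanishing determinant makes these two vectors independent, forcing $\dim T_xX\ge 2$ and hence $\dim X\ge 2$, which is absurd --- no further information about $f_0$ is required. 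You instead conclude $\nabla f_0(a,b)=0$ and must then prove the much stronger statement that $V(f_0)$ is smooth off the origin, i.e.\ that $f_0$ is squarefree, which costs you the generating-function and Legendre analysis. That analysis does work: the identity $P_n(z)=(1+4z)^{n/2}P_n^{\mathrm{Leg}}\bigl((1+4z)^{-1/2}\bigr)$ is the classical representation $P_n^{\mathrm{Leg}}(x)=\sum_j\binom{n}{2j}\binom{2j}{j}\bigl((x^2-1)/4\bigr)^jx^{n-2j}$ in disguise; the $[n/2]$ distinct values $z=(1-x_0^2)/(4x_0^2)$ coming from the $\pm$-pairs of nonzero Legendre roots exhaust the degree of $P_{k+1}(z)$, so all its roots are simple; and the factorization of weighted-homogeneous polynomials into $y$, $z$ and $y^2+\beta z$ (smooth components meeting pairwise only at the origin) does convert squarefreeness into smoothness away from $0$ --- though you should state this last implication explicitly, and note that it is $c_{[(k+1)/2]}\ne 0$, not $c_0=1$, that rules out $y^2\mid f_0$ (for $k$ even one actually has $y\mid f_0$). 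In short, your route is valid and yields the extra information that $f_0$ is squarefree with explicitly located roots, but it is substantially heavier than the paper's soft tangent-space argument, which needs nothing about $f_0$ beyond the sign pattern of its coefficients.
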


\begin{proof}
Let
$X=\{(y,z)\in \C^2\,|\, g(y,z)=0\text{ for all }g\in J\}$ be the variety 
of the zero set of $J$. 
We need to show that
$\dim X=0$,
or equivalently,
$X=\{(0,0)\}$ 
because $X$ is conic.

By (\ref{eq:in-J}), 
$D$ preserves $J$,
and hence
it defines a derivation on $\C[y,z]/J$. 
Since $J$ is a homogenous ideal, it is
also preserved by the derivation
\begin{equation*}
E = y \frac{\partial}{\partial y} + 2z \frac{\partial}{\partial z},
\end{equation*}
which corresponds to
the weight $0$ operator $L_{\mathrm{aff}}(0) = (\omega_{\mathrm{aff}})_1$ 
for the conformal vector $\omega_{\mathrm{aff}}$ of $V(k,0)$. 
Thus we have 
two elements $D, E$ in $\operatorname{Der}(\C[y,z]/J)$.
Let $D_x$ and $E_x$
be the images of
these elements
under the natural map
$\operatorname{Der}(\C[y,z]/J)\rightarrow T_{x}X$ for $x\in X$, 
where $T_x X$ is the tangent space of $X$ at $x$.

The determinant of the $2 \times 2$ matrix consisting of the coefficients of
$\partial/\partial y$ and $\partial/\partial z$ in the differential operators $D$ and
$E$ is
\begin{equation}\label{eq:det_D_Laff}
\left|
\begin{array}{cc}
(k+2)y^2 - 2kz & (3k+4)yz\\
y & 2z
\end{array}
\right|
= -k(y^2+4z)z.
\end{equation}

Suppose 
$\dim X>0$.
Then there exists a point $(y,z)\ne (0,0)$
in the smooth part of $X$.
We claim that the determinant $-k(y^2+4z)z$ can not be $0$. 
Indeed,
if $z=-y^2/4$
then 
\begin{equation*}
f_0(y, -y^2/4) = \sum_{j=0}^{[(k+1)/2]} \frac{(-1)^j}{4^j} c_j y^{k+1}
\end{equation*}
is nonzero, for $(-1)^j c_j > 0$. Also,
we  have $f_0(y,0) = y^{k+1} \ne 0$. 
We have proved that
the vectors  
$D_{(y,z)}$ and $E_{(y,z)}$ 
are linearly independent,
and hense
$\dim T_{(y,z)}X\geq 2$.
This forces that
 $\dim X\geq 2$,
which is absurd.
This completes the proof.
\end{proof}

We will study the weight $n$ subspace $J_{(n)}$ of $J$. Since $f_0(y,z)$ and
$f_1(y,z)$ are homogeneous polynomials of weight $k+1$ and $k+2$, respectively, 
we have $J_{(n)} = 0$ for $n \le k$, $J_{(k+1)} = \C f_0(y,z)$, and
\begin{equation}\label{eq:weight_subspace_J}
J_{(n)} = \C[y,z]_{(n-k-1)}f_0(y,z) + \C[y,z]_{(n-k-2)}f_1(y,z)
\end{equation}
for $n \ge k+2$. 
The next proposition plays an important role in our argument.

\begin{prop}\label{prop:kernel_pi}
We have an exact sequence of $\C[y,z]$-modules
\begin{equation}\label{eq:exact_sec_J}
0 \rightarrow \C[y,z] \stackrel{\xi_1}{\rightarrow} \C[y,z] \oplus \C[y,z] 
\stackrel{\xi_2}{\rightarrow} J \rightarrow 0,
\end{equation}
where
\begin{align*}
\xi_1 &: h(y,z) \mapsto (h(y,z)f_1(y,z), -h(y,z)f_0(y,z)), \\
\xi_2 &: (h_0(y,z), h_1(y,z)) \mapsto h_0(y,z)f_0(y,z) + h_1(y,z) f_1(y,z).
\end{align*}
That is, $a(y,z)f_0(y,z) + b(y,z)f_1(y,z) = 0$ for 
$a(y,z)$, $b(y,z) \in \C[y,z]$ if and only if $a(y,z) = h(y,z)f_1(y,z)$ and 
$b(y,z) = -h(y,z)f_0(y,z)$ for some $h(y,z) \in \C[y,z]$.
\end{prop}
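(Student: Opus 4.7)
The plan is to peel off the easy parts first and reduce the whole statement to the single fact that $f_0(y,z)$ and $f_1(y,z)$ have no common polynomial factor of positive degree. Surjectivity of $\xi_2$ is just the definition of $J$; injectivity of $\xi_1$ follows from $\C[y,z]$ being an integral domain together with $f_0(y,z), f_1(y,z) \neq 0$; and $\xi_2 \circ \xi_1 = 0$ is the computation $f_0 f_1 - f_1 f_0 = 0$. So the only real content is the exactness in the middle: given $a(y,z), b(y,z) \in \C[y,z]$ with
\begin{equation*}
a(y,z) f_0(y,z) + b(y,z) f_1(y,z) = 0,
\end{equation*}
I must produce $h(y,z) \in \C[y,z]$ with $a = h f_1$ and $b = -h f_0$.

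Here I would use that $\C[y,z]$ is a UFD. Rewriting the relation as $a f_0 = -b f_1$, if one knows that $\gcd(f_0,f_1) = 1$, then $f_1$ divides $a$, say $a = h f_1$, and substituting back and cancelling $f_1$ (permissible as $\C[y,z]$ is a domain) gives $b = -h f_0$, which is exactly the required form.

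So the whole proof reduces to showing $\gcd(f_0, f_1) = 1$, and this is where I would invoke Lemma \ref{lem:J-cofinite}. The proof of that lemma already showed that the vanishing locus $X = V(J) \subset \C^2$ is the single point $\{(0,0)\}$. Now suppose for contradiction that $p(y,z)$ is an irreducible polynomial of positive degree dividing both $f_0$ and $f_1$. Then $V(p) \subseteq V(f_0) \cap V(f_1) = V(J) = \{(0,0)\}$. But an irreducible polynomial of positive degree in $\C[y,z]$ cuts out an affine plane curve, which is an infinite set by, e.g., the Nullstellensatz or a direct parametrization argument. This contradiction forces $\gcd(f_0,f_1) = 1$.

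The main (in fact only) obstacle is verifying that the finiteness statement of Lemma \ref{lem:J-cofinite} is strong enough to rule out a common factor; the above geometric argument handles this cleanly, so there is no real obstacle beyond packaging. One cosmetic point to be careful about: to conclude $V(p)$ is infinite one should briefly remark that over an algebraically closed field an irreducible polynomial in two variables has infinitely many zeros (e.g.\ after a linear change of coordinates making it monic in one variable, one has a zero for every value of the other variable). With that in place, the exact sequence follows.
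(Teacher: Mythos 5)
Your proof is correct, and it takes a genuinely different route from the paper's. You reduce everything to the coprimality of $f_0$ and $f_1$ in the UFD $\C[y,z]$: once $\gcd(f_0,f_1)=1$, the relation $af_0=-bf_1$ forces $f_1\mid a$ and the syzygy module is exactly the cyclic module generated by $(f_1,-f_0)$, which is the standard exactness of the Koszul complex for a regular sequence of length two. Your derivation of $\gcd(f_0,f_1)=1$ is also sound: a common irreducible factor $p$ of positive degree would give $V(p)\subseteq V(f_0)\cap V(f_1)=V(J)$, and $V(J)$ is finite (indeed $\{(0,0)\}$, as established in the proof of Lemma \ref{lem:J-cofinite}), while $V(p)$ is infinite over $\C$ — no circularity, since that lemma precedes the proposition. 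The paper instead argues by graded dimension count: it introduces the free module $\CF=\C[y,z]P_0\oplus\C[y,z]P_1$, shows that a hypothetical relation of weight $m\le 2k+2$ would force $\dim J_{(n)}<\dim\C[y,z]_{(n)}$ for infinitely many $n$ (contradicting the cofiniteness of $J$), and then identifies $\Ker\pi$ with $\C[y,z](f_1P_0-f_0P_1)$ by matching Hilbert series. So both proofs ultimately rest on Lemma \ref{lem:J-cofinite}, but you use its geometric content ($V(J)$ is a point) while the paper uses its numerical content ($\dim\C[y,z]/J<\infty$) through weight-space dimensions. Your argument is shorter and more conceptual; the paper's has the side benefit that the explicit values of $\dim J_{(n)}$ recorded in Lemma \ref{lem:weight_subspace_J} fall out of the proof — though these also follow immediately from the exact sequence once it is established, so nothing is lost.
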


\begin{proof}
Let  $\CF$  be a free $\C[y,z]$-module with basis $\{P_0, P_1\}$.
We assign the weight of $P_0$ and $P_1$ by
$\wt P_0 = k+1$ and $\wt P_1 = k+2$ so that $\CF$ is a $\Z_{\ge 0}$-graded module. 
Denote by $\CF_{(n)}$ the weight $n$ subspace of $\CF$. Then $\CF_{(n)} = 0$ for $n \le k$,
$\CF_{(k+1)} = \C P_0$ is of dimension $1$, and
\begin{equation}\label{eq:weight_subspace_F}
\CF_{(n)} = \C[y,z]_{(n-k-1)}P_0 + \C[y,z]_{(n-k-2)}P_1
\end{equation}
for $n \ge k+2$.
Let $\pi$ be a homomorphism of
$\C[y,z]$-modules from $\CF$ onto $J$ which maps $P_r$ to $f_r(y,z)$, $r = 0,1$.
\begin{equation}\label{eq:def_pi}
\pi : \CF \rightarrow J; \quad P_r \mapsto f_r(y,z),  \quad r = 0,1.
\end{equation}
Then $\CF/\Ker \pi \cong J$. We will determine the kernel $\Ker \pi$ of
$\pi$ by using the equation
\begin{equation}\label{eq:Gaussian_symbol}
[n/2]  = n-k-2 - [(n-2k-3)/2]
\end{equation}
for $n \ge 2k+3$.

Suppose $a(y,z)f_0(y,z) + b(y,z)f_1(y,z) = 0$ for some nonzero homogeneous 
$a(y,z)$, $b(y,z) \in \C[y,z]$ with $\wt a(y,z) \le k+1$ and $\wt b(y,z) \le k$. 
We choose  $a(y,z)$ and $b(y,z)$ so that
their weights are as small as possible. Then $\wt a(y,z) = m-k-1$ and
$\wt b(y,z)  = m-k-2$ for some $k+3 \le m \le 2k+2$.

Let $T = a(y,z)P_0 + b(y,z)P_1 \in \CF$. Then $\wt T = m$ and $\C[y,z]T$ is
contained in $\Ker \pi$. Take $n \ge m$. We consider the dimension of the weight
$n$ subspace. Since $\CF$ is a free $\C[y,z]$-module, it follows from
Lemma \ref{lem:basis_CA} and \eqref{eq:weight_subspace_F} that
\begin{equation}\label{eq:dim_Fn}
\begin{split}
\dim \CF_{(n)} &= [(n-k-1)/2] + [(n-k-2)/2] + 2\\
&= n-k.
\end{split}
\end{equation}

Likewise, the weight $n$ subspace of $\C[y,z]T$ is $\C[y,z]_{(n-m)}T$,
which is  $[(n-m)/2] + 1$ dimensional.
Since $\CF/\Ker \pi \cong J$ and
$\C[y,z]T \subset \Ker \pi$, we have an upper bound for the dimension of $J_{(n)}$.
\begin{equation}\label{eq:upper_bound_Jn}
\dim J_{(n)} \le  n-k-1 - [(n-m)/2].
\end{equation}

For simplicity of notation, we denote the right hand side of \eqref{eq:upper_bound_Jn}
by $Q(n,m)$.
If $n = 2k+2$, then $Q(2k+2,m) = k+1 - [(2k+2-m)/2]$ and so 
$Q(n,m) < [n/2] + 1$ for all $k+3 \le m \le 2k+2$. 
If $n = 2k+3$, then $Q(2k+3,m) = k+2 - [(2k+3-m)/2]$ and so 
$Q(n,m) = [n/2] + 1$ for $m = 2k+2$ and $Q(n,m) < [n/2] + 1$ for 
$k+3 \le m \le 2k+1$.
Now, both $Q(n,m)$ and $[n/2] + 1$ increase by $1$ when $n$ increases by $2$. 
Since $\dim \C[y,z]_{(n)} = [n/2] + 1$, it follows that
$\dim J_{(n)} < \dim \C[y,z]_{(n)}$ for infinitely many $n$.
But this is impossible by Lemma \ref{lem:J-cofinite}. 
Therefore, $a(y,z)f_0(y,z) + b(y,z)f_1(y,z) \ne 0$ 
for any nonzero homogeneous polynomials 
$a(y,z)$, $b(y,z) \in \C[y,z]$ with $\wt a(y,z) \le k+1$ and $\wt b(y,z) \le k$.

This implies that
the weight $n$ subspace $(\Ker \pi)_{(n)}$ of $\Ker \pi$ is $0$
for $n \le 2k+2$ and that $\dim J_{(n)} = n-k$ for $k+2 \le n \le 2k+2$
by \eqref{eq:weight_subspace_J}. 
In particular, $J_{(n)} = \C[y,z]_{(n)}$ for $n=2k+1$, $2k+2$.

Let $s$ be a positive integer. By Lemma \ref{lem:basis_CA}, 
$\C[y,z]_{(2s+1)} = y \C[y,z]_{(2s)}$ and
$\C[y,z]_{(2s+2)} = y \C[y,z]_{(2s+1)} + z \C[y,z]_{(2s)}$.
Hence the above paragraph in fact implies that
$J_{(n)} = \C[y,z]_{(n)}$ for all $n \ge 2k+1$.

Finally, we determine $\Ker \pi$. As shown above,
$ (\Ker \pi)_{(n)} = 0$ for $n \le 2k+2$.
Let $n \ge 2k+3$. Since $\CF/\Ker \pi \cong J$, we have
$\dim (\Ker \pi)_{(n)} = \dim \CF_{(n)} - \dim J_{(n)}$.
Moreover, $\dim J_{(n)} = [n/2] + 1$. Thus
\begin{equation*}
\dim (\Ker \pi)_{(n)} = [(n-2k-3)/2] + 1
\end{equation*}
by \eqref{eq:Gaussian_symbol} and \eqref{eq:dim_Fn}.
Now, $\C[y,z] (f_1(y,z)P_0 - f_0(y,z)P_1)$ is contained in
$\Ker \pi$ and the dimension of its weight $m$ subspace
coincides with $\dim (\Ker \pi)_{(m)}$ for all $m \ge 0$. 
Therefore, it is equal to $\Ker \pi$ and the proof is complete.
\end{proof}

In the proof of the above proposition, the weight $n$
subspace $J_{(n)}$ of $J$ is determined.
Combining the results with Lemma \ref{lem:basis_CA},
we can calculate the dimension of $\C[y,z]/J$.

\begin{lem}\label{lem:weight_subspace_J}
$(1)$ $J_{(n)} = 0$ if $n \le k$ and  $J_{(k+1)} = \C f_0(y,z)$.

$(2)$ $\dim J_{(n)} = n-k$ if $k \le n \le 2k+2$.

$(3)$ $J_{(n)} = \C[y,z]_{(n)}$ if $n \ge 2k+1$.

$(4)$ $\dim \C[y,z]/J = (k+1)(k+2)/2$.
\end{lem}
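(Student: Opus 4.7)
My plan is to observe that parts (1), (2), (3) of the lemma were essentially established already inside the proof of Proposition \ref{prop:kernel_pi}, so the work is mostly to assemble them cleanly and then deduce (4) by a dimension count.

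For (1), since $f_0(y,z)$ and $f_1(y,z)$ are homogeneous of weights $k+1$ and $k+2$ respectively, and $J = \C[y,z]f_0(y,z) + \C[y,z]f_1(y,z)$, nothing in $J$ has weight less than $k+1$, and the weight $k+1$ component is $\C f_0(y,z)$.

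For (2) and (3), I will recall the three facts proved inside the argument for Proposition \ref{prop:kernel_pi}: first, $(\Ker \pi)_{(n)} = 0$ for $n \le 2k+2$, so $\dim J_{(n)} = \dim \CF_{(n)} = n-k$ for $k+1 \le n \le 2k+2$, which gives (2); second, $J_{(2k+1)} = \C[y,z]_{(2k+1)}$ and $J_{(2k+2)} = \C[y,z]_{(2k+2)}$, since $\dim J_{(n)} = [n/2] + 1 = \dim \C[y,z]_{(n)}$ in those two weights; third, the multiplication identities
\begin{equation*}
\C[y,z]_{(2s+1)} = y\,\C[y,z]_{(2s)}, \qquad
\C[y,z]_{(2s+2)} = y\,\C[y,z]_{(2s+1)} + z\,\C[y,z]_{(2s)}
\end{equation*}
from Lemma \ref{lem:basis_CA} allow an induction on $n$ that propagates $J_{(n)} = \C[y,z]_{(n)}$ from $n = 2k+1, 2k+2$ to all larger $n$, yielding (3).

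For (4), since $J_{(n)} = \C[y,z]_{(n)}$ for $n \ge 2k+1$, the quotient is finite dimensional and
\begin{equation*}
\dim \C[y,z]/J = \sum_{n=0}^{2k} \bigl( \dim \C[y,z]_{(n)} - \dim J_{(n)} \bigr).
\end{equation*}
By Lemma \ref{lem:basis_CA} we have $\dim \C[y,z]_{(n)} = [n/2] + 1$, and by (1), (2) we have $\dim J_{(n)} = 0$ for $n \le k$ and $\dim J_{(n)} = n - k$ for $k+1 \le n \le 2k$. Substituting and splitting the sum by parity of $n$ and $k$ reduces the problem to an elementary finite sum, which I expect to evaluate to $(k+1)(k+2)/2$ after a short arithmetic check (easily verified on a small case, e.g.\ $k=5$ gives $12 + 9 = 21 = 6\cdot 7/2$).

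The main obstacle here is really only the bookkeeping for part (4); no new vertex-algebraic input is needed beyond what Proposition \ref{prop:kernel_pi} and Lemma \ref{lem:basis_CA} already supply. The only mild care required is to keep track of the floor function $[n/2]$ according to the parity of $n$, which suggests treating the two parities separately when summing.
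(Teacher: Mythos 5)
Your proposal is correct and follows essentially the same route as the paper, which likewise just reads off parts (1)--(3) from the facts established in the proof of Proposition \ref{prop:kernel_pi} together with Lemma \ref{lem:basis_CA}, and then obtains (4) by the weight-by-weight dimension count. The arithmetic in (4) does work out to $(k+1)(k+2)/2$ as you expect.
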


Next, we study the intersection of $J_{(n)}$ and $\CA$.

\begin{lem}\label{lem:J_cap_CA}
$(1)$ $J_{(n)} \cap \CA = 0$ if $n \le k$ and $J_{(k+1)} \cap \CA = \C f_0(y,z)$.

$(2)$ $\dim J_{(n)} \cap \CA = n-k-1$ if $k+2 \le n \le 2k+2$.

$(3)$ $J_{(n)} \cap \CA = \CA_{(n)}$ if $n \ge 2k+1$.

$(4)$ $\dim \CA/(J \cap \CA) = k(k+1)/2$.
\end{lem}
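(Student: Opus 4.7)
Parts (1) and (3) are quick. For (1), Lemma~\ref{lem:weight_subspace_J} gives $J_{(n)} = 0$ for $n \le k$ and $J_{(k+1)} = \C f_0(y,z)$; since the singular vector $\bu^0 \in \CN$, its image $\overline{\bu^0}$ — a nonzero scalar multiple of $f_0(y,z)$ — lies in $\CA$, so $J_{(k+1)} \cap \CA = \C f_0(y,z)$. Part (3) is immediate from Lemma~\ref{lem:weight_subspace_J}(3): if $J_{(n)} = \C[y,z]_{(n)}$, then $J_{(n)} \cap \CA = \CA_{(n)}$ at once.

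For (2), my plan is to exploit the coordinate description of $\CA_{(n)}$ extracted from Lemma~\ref{lem:basis_CA}(2). That description says exactly: writing a weight-$n$ polynomial as $p(y,z) = \sum_{j=0}^{[n/2]} a_j y^{n-2j} z^j$, we have $p \in \CA_{(n)}$ if and only if the single linear condition $a_1 + nk\, a_0 = 0$ holds. Thus $\CA_{(n)}$ has codimension one in $\C[y,z]_{(n)}$ for $n \ge 2$, and $\dim J_{(n)} \cap \CA_{(n)}$ is either $\dim J_{(n)}$ or $\dim J_{(n)} - 1$, depending on whether every element of $J_{(n)}$ satisfies that single linear condition. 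To rule out the former, I would exhibit the witness $y^{n-k-1} f_0(y,z) \in J_{(n)}$ for $n \ge k+1$, then read off from Corollary~\ref{eq:formula-1} that its first two coefficients are $a_0 = c_0 = 1$ and $a_1 = c_1 = -k(k+1)$, so $a_1 + nk\, a_0 = k(n - k - 1)$, which is nonzero precisely when $n > k+1$. Combined with $\dim J_{(n)} = n-k$ from Lemma~\ref{lem:weight_subspace_J}(2), this yields $\dim J_{(n)} \cap \CA = n - k - 1$ for $k+2 \le n \le 2k+2$.

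For (4), since $\CA$ and $J \cap \CA$ are both homogeneous subspaces of $\C[y,z]$, I will write $\dim \CA/(J \cap \CA) = \sum_n \bigl(\dim \CA_{(n)} - \dim (J_{(n)} \cap \CA_{(n)})\bigr)$. Substituting $\dim \CA_{(n)}$ from Lemma~\ref{lem:basis_CA}(2) together with the piecewise formulas from (1)–(3) — and noting that the contribution vanishes for $n \ge 2k+1$, since (2) and (3) agree at $n = 2k+1, 2k+2$ with $\dim \CA_{(n)}$ — reduces the problem to a finite combinatorial sum, which a direct evaluation (splitting by the parity of $k$) shows equals $k(k+1)/2$. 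The only genuinely substantive step is constructing the witness $y^{n-k-1} f_0(y,z)$ in (2) and verifying the coefficient identity; once the codimension-one characterization of $\CA_{(n)}$ is in hand, everything else is bookkeeping.
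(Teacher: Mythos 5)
Your proposal is correct and follows essentially the same route as the paper: the key step in both is the witness $y^{n-k-1}f_0(y,z) \in J_{(n)} \setminus \CA$ (detected via the codimension-one description of $\CA_{(n)}$ from Lemma \ref{lem:basis_CA}(2), which forces the coefficient relation $a_1 + nk\,a_0 = 0$), combined with $\dim J_{(n)} = n-k$ from Lemma \ref{lem:weight_subspace_J}. The remaining parts are the same bookkeeping the paper carries out.
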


\begin{proof}
The assertions (1) and (3) are clear from Lemma \ref{lem:weight_subspace_J}.
Take $n$ such that $k+2 \le n \le 2k+2$ and multiply $f_0(y,z)$ by $y^{n-k-1}$.
Then we obtain an element
\begin{equation*}
y^{n-k-1} f_0(y,z) = y^n - (k+1)k y^{n-2}z^2 + (\textrm{terms of the form }
y^{n-2j}z^j, j \ge 2)
\end{equation*}
of $J_{(n)}$, which is not contained in $\CA$ by Lemma \ref{lem:basis_CA}. 
Since $\dim \CA_{(n)} = \dim \C[y,z]_{(n)} - 1$, 
we have  $\dim J_{(n)} \cap \CA = \dim J_{(n)}  - 1$
and the assertion (2) holds. Now, we can calculate the dimension of
$\CA/(J \cap \CA)$ and obtain the assertion (4).
\end{proof}

We consider some ideals of $\CA$ generated by $f_r(y,z)$, $r = 0, 1, 2, \cdots$.
Let $I_s$ be the ideal of $\mathcal{A}$ generated by
$f_r(y,z)$, $0 \le r \le s-1$ and
$I$ the ideal of $\CA$ generated by $f_r(y,z)$, $r \ge 0$.
We will calculate the dimension of weight $n$ subspaces $I_{s(n)}$
and $I_{(n)}$ of these ideals.
First, we study $I_2 = \CA f_0(y,z) + \CA f_1(y,z)$.

\begin{lem}\label{lem:weight_subspace_I2}
$(1)$ $ I_{2(n)} = 0$ if $n \le k$ and $\dim I_{2(n)} = 1$ if $n= k+1, k+2$.

$(2)$  $\dim I_{2(n)} = n-k-2$ if $k+3 \le n \le 2k+2$.

$(3)$  $\dim I_{2(2k+3)} = k$.

$(4)$ $I_{2(n)} = \CA_{(n)}$ if $n \ge 2k+4$.

$(5)$ $\dim \CA/I_2 = (k+1)(k+2)/2$.
\end{lem}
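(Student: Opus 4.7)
The approach mirrors Proposition \ref{prop:kernel_pi} and Lemma \ref{lem:weight_subspace_J}, but inside $\CA$ instead of $\C[y,z]$. The basic object is the surjective weight-$n$ map
\[
\xi_n : \CA_{(n-k-1)} \oplus \CA_{(n-k-2)} \twoheadrightarrow I_{2(n)}, \qquad (a,b) \mapsto a f_0(y,z) + b f_1(y,z).
\]
By Proposition \ref{prop:kernel_pi} the syzygies of $(f_0, f_1)$ over $\C[y,z]$ are generated by $(f_1, -f_0)$, so under the identification $h \leftrightarrow (hf_1, -hf_0)$ one has
\[
\ker \xi_n \;\cong\; K_n := \{\, h \in \C[y,z]_{(n-2k-3)} \,:\, h f_0(y,z) \in \CA \text{ and } h f_1(y,z) \in \CA \,\}.
\]

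For $n \le 2k+2$ the weight of $h$ is negative, so $K_n = 0$; assertions (1) and (2) then follow from a direct count of $\dim\CA_{(n-k-1)} + \dim\CA_{(n-k-2)}$ using $\dim\CA_{(0)} = 1$, $\dim\CA_{(1)} = 0$, and $\dim \CA_{(m)} = [m/2]$ for $m \ge 2$ (Lemma \ref{lem:basis_CA}). For $n = 2k+3$ the parameter $h$ is a scalar and $K_n = \C$ automatically since $f_0, f_1 \in \CA$, giving $\dim I_{2(2k+3)} = \dim\CA_{(k+1)} + \dim\CA_{(k+2)} - 1 = k$, which is (3).

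The heart of the argument is $n \ge 2k+4$. The plan is to prove $K_n \cong \CA_{(n-2k-3)}$; the arithmetic identity $\dim\CA_{(n-k-1)} + \dim\CA_{(n-k-2)} - \dim\CA_{(n-2k-3)} = [n/2]$ (a short case check on the parity of $n$) then gives $\dim I_{2(n)} = \dim\CA_{(n)}$, and combined with $I_{2(n)} \subset \CA_{(n)}$ forces $I_{2(n)} = \CA_{(n)}$, proving (4). The inclusion $\CA_{(n-2k-3)} \subset K_n$ is clear because $\CA$ is a ring and $f_0, f_1 \in \CA$. The converse is the crux and reduces to the following statement:

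\emph{Key Lemma.} For every integer $m \ge 1$, if $h \in \C[y,z]_{(m)}$ satisfies $h f_0(y,z) \in \CA$, then $h \in \CA$.

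To prove it, use Lemma \ref{lem:basis_CA} to identify $\CA_{(m)}$ with $\ker \phi_m$, where $\phi_m\big(\sum_j a_j y^{m-2j}z^j\big) = a_1 + mk\,a_0$ (with $a_1 := 0$ when $m < 2$), and to write $\C[y,z]_{(m)} = \CA_{(m)} \oplus \C y^m$. Decomposing $h = h_0 + c y^m$ with $h_0 \in \CA_{(m)}$ reduces the claim to $y^m f_0(y,z) \notin \CA$ for $m \ge 1$. The explicit leading coefficients $c_0 = 1$ and $c_1 = -k(k+1)$ from Corollary \ref{eq:formula-1} give
\[
\phi_{m+k+1}\big(y^m f_0(y,z)\big) = -k(k+1) + (m+k+1)k = km \neq 0,
\]
so $c = 0$ and $h \in \CA$. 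Finally, (5) is obtained by summing $\dim \CA_{(n)} - \dim I_{2(n)}$ over $n$; only $0 \le n \le 2k+3$ contributes, and a direct summation (pairing consecutive weights $2j$ and $2j+1$) yields $(k+1)(k+2)/2$. The one nontrivial ingredient is the Key Lemma; the single coefficient computation $\phi_{m+k+1}(y^m f_0) = km$ is where the singular vector $\bu^0$ makes its presence felt, and once it is in hand the rest is routine dimension bookkeeping patterned on Lemma \ref{lem:weight_subspace_J}.
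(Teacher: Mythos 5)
Your proposal is correct and takes essentially the same route as the paper: the paper's proof of (2)--(4) is a two-line appeal to Lemma \ref{lem:basis_CA} and Proposition \ref{prop:kernel_pi}, and your argument is precisely the detailed execution of that appeal, computing $\ker\xi_n$ via the syzygy $(f_1,-f_0)$ and counting dimensions. Your ``Key Lemma'' ($hf_0\in\CA\Rightarrow h\in\CA$, verified by $\phi_{m+k+1}(y^mf_0)=km\ne 0$) is the same coefficient check on the leading terms $c_0=1$, $c_1=-k(k+1)$ of $f_0$ that the paper performs in the proof of Lemma \ref{lem:J_cap_CA}, so there is no genuinely new ingredient, only a clean write-up of what the paper leaves implicit.
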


\begin{proof}
The weight $n$ subspace of $I_2$ is 
$I_{2(n)} = \CA_{(n-k-1)} f_0(y,z) + \CA_{(n-k-2)} f_1(y,z)$. 
Since $\CA_{(0)} = \C$ and $\CA_{(1)} = 0$,
we have  $I_{2(k+1)} = \C f_0(y,z)$ and $I_{2(k+2)} = \C f_1(y,z)$.
Thus the assertion (1) holds. 
The assertions (2), (3) and (4) follow from Lemma \ref{lem:basis_CA} 
and Proposition \ref{prop:kernel_pi}. 
Now, we can calculate the dimension of $\CA/I_2$ and obtain the assertion (5).
\end{proof}

Next, we study $I_3 = I_2 + \CA f_2(y,z)$.

\begin{lem}\label{lem:weight_subspace_I3}
$(1)$ $I_{3(n)} = I_{2(n)}$ if $n \le k+2$, $n = k+4$, or $n \ge 2k+4$.

$(2)$ $\dim I_{3(n)} = \dim I_{2(n)} + 1$ if $n = k+3$ or $k+5 \le n \le 2k+3$.

$(3)$ $I_{3(n)} = \CA_{(n)}$ if $n \ge 2k+1$.

$(4)$ $\dim \CA/I_3 = k(k+1)/2 + 1$.
\end{lem}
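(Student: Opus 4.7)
The plan is to analyze $I_{3(n)} = I_{2(n)} + \CA_{(n-k-3)} f_2(y,z)$ weight-by-weight, using three key ingredients: the identity $f_2 = p(y,z) f_0 + q(y) f_1$ from Lemma \ref{lem:relation_f0f1f2}, the description of relations $a f_0 + b f_1 = 0$ from Proposition \ref{prop:kernel_pi}, and the decomposition $\C[y,z]_{(r)} = \C y^r \oplus \CA_{(r)}$ for $r \ge 1$ from Lemma \ref{lem:basis_CA}(2).

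First I would dispose of the trivial portions of (1). Since $\wt f_2 = k+3$ and $\CA_{(1)} = 0$, the summand $\CA_{(n-k-3)} f_2$ vanishes for $n \le k+2$ and for $n = k+4$, giving $I_{3(n)} = I_{2(n)}$; and for $n \ge 2k+4$, Lemma \ref{lem:weight_subspace_I2}(4) already gives $I_{2(n)} = \CA_{(n)}$, so $I_{3(n)} = \CA_{(n)} = I_{2(n)}$. For the $n = k+3$ case of (2), I would show $f_2 \notin I_{2(k+3)} = \C g_2 f_0$ as follows: if $f_2 = c g_2 f_0$ for some scalar $c$, then combining with $f_2 = pf_0 + qf_1$ gives $(p - cg_2)f_0 + qf_1 = 0$, and Proposition \ref{prop:kernel_pi} forces $q = -hf_0$ for some $h \in \C[y,z]$, which is impossible since $\wt q = 1 < k+1 = \wt f_0$. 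Hence $\dim I_{3(k+3)} = 2$.

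The core of the argument is the range $k+5 \le n \le 2k+3$. Setting $m := n-k-3$ so $2 \le m \le k$, I would analyze the linear map $\CA_{(m)} \to \CA_{(n)}/I_{2(n)}$, $g \mapsto gf_2 + I_{2(n)}$, using the basis $\{g_\ast\} \cup \{g_j : 2 \le j \le [m/2]\}$ of $\CA_{(m)}$ from Lemma \ref{lem:basis_CA}(2), with $g_\ast = y^m - mky^{m-2}z$ and $g_j = y^{m-2j}z^j$. For each $g_j$ with $j \ge 2$, direct multiplication shows every monomial appearing in $g_j p$ and in $g_j q$ has $z$-exponent at least $2$, placing $g_j p \in \CA_{(m+2)}$ and $g_j q \in \CA_{(m+1)}$, hence $g_j f_2 \in I_{2(n)}$. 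For $g_\ast$, I would compute its $\C y^{m+2}$-component along $\CA_{(m+2)}$: any weight-$(m+2)$ polynomial $a_0 y^{m+2} + a_1 y^m z + \cdots$ projects to $(a_0 + a_1/((m+2)k)) y^{m+2}$, and substituting the expansion of $g_\ast p$ yields the nonzero value $-(k+1)(k+2)^2(2k+3)(m+2)^{-1} y^{m+2}$. In particular $g_\ast p \notin \CA$. If $g_\ast f_2 = Af_0 + Bf_1 \in I_{2(n)}$ with $A \in \CA_{(m+2)}$, $B \in \CA_{(m+1)}$, Proposition \ref{prop:kernel_pi} forces $A = g_\ast p - hf_1$ for some $h \in \C[y,z]_{(n-2k-3)}$. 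For $n \le 2k+2$ the negative weight forces $h = 0$, contradicting $g_\ast p \notin \CA$; for $n = 2k+3$, $h$ is a constant, but since $f_1 \in \CA$ its $\C y^{k+2}$-projection vanishes, so $A$ has the same nonzero projection as $g_\ast p$, again a contradiction. Thus $g_\ast f_2 \notin I_{2(n)}$ and the map has exactly one-dimensional image, completing (2).

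Assertions (3) and (4) then reduce to dimension bookkeeping. For $n = 2k+1, 2k+2, 2k+3$, part (2) gives $\dim I_{3(n)} = \dim I_{2(n)} + 1 = \dim \CA_{(n)}$, which together with the $n \ge 2k+4$ case from (1) yields (3). Summing contributions, $\dim I_3/I_2 = 1 + (2k+3 - (k+5) + 1) = k$, so $\dim \CA/I_3 = (k+1)(k+2)/2 - k = k(k+1)/2 + 1$ by Lemma \ref{lem:weight_subspace_I2}(5), proving (4). The main technical obstacle will be the explicit projection computation for $g_\ast p$ and the careful treatment of the boundary weight $n = 2k+3$, which crucially uses that both $f_0$ and $f_1$ lie in $\CA$.
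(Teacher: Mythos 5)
Your proof is correct and takes essentially the same route as the paper: the trivial weights are handled identically, the case $n=k+3$ and the range $k+5\le n\le 2k+3$ are settled by combining Lemma \ref{lem:relation_f0f1f2} with Proposition \ref{prop:kernel_pi} and testing the basis of $\CA_{(n-k-3)}$ from Lemma \ref{lem:basis_CA}, and (3)--(4) follow from the same dimension count. The only cosmetic difference is that you certify $(y^m-mky^{m-2}z)f_2\notin I_{2(n)}$ through the $f_0$-coefficient (computing the nonzero projection of $(y^m-mky^{m-2}z)p(y,z)$ onto $\C y^{m+2}$), whereas the paper uses the $f_1$-coefficient (observing that $y(y^m-mky^{m-2}z)\notin\CA$); both follow immediately from Lemma \ref{lem:basis_CA}(2).
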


\begin{proof}
Since $f_2(y,z)$ is a homogeneous polynomial of weight $k+3$, we have 
$I_{3(n)} = I_{2(n)}$ for $n \le k+2$.
Let $n \ge k+3$. Then the weight $n$ subspace of $I_3$ is
\begin{equation}\label{eq:weight_subspace_I3}
I_{3(n)} =  \CA_{(n-k-1)} f_0(y,z) + \CA_{(n-k-2)} f_1(y,z) + \CA_{(n-k-3)} f_2(y,z).
\end{equation}

If $n=k+4$, then $I_{3(n)} = I_{2(n)}$ since $\CA_{(1)} = 0$.
If $n \ge 2k+4$, then $I_{3(n)} = I_{2(n)}$ by
Lemma \ref{lem:weight_subspace_I2} (4).
Hence the assertion (1) holds.

There is no $\alpha \in \C$ such that $f_2(y,z) = \alpha (y^2-2kz) f_0(y,z)$ 
by Lemma \ref{lem:relation_f0f1f2} and Proposition \ref{prop:kernel_pi}. 
Since $\CA_{(0)} = \C$,  $\CA_{(1)} = 0$ and $\CA_{(2)} = \C (y^2-2kz)$, 
we see that $I_{3(k+3)} = I_{2(k+3)} \oplus \C f_2(y,z)$.

It remains to discuss the case $k+5 \le n \le 2k+3$.
In this case we study a necessary and sufficient condition for
$c(y,z) \in \CA_{(n-k-3)} $ such that the equation
\begin{equation}\label{eq:condition_on_cyz}
c(y,z) f_2(y,z) = a(y,z) f_0(y,z) + b(y,z) f_1(y,z)
\end{equation}
holds for some $a(y,z) \in \CA_{(n-k-1)} $, $b(y,z) \in \CA_{(n-k-2)}$.
We deal with two cases $k+5 \le n \le 2k+2$ and $n = 2k+3$ separately.
Suppose $k+5 \le n \le 2k+2$. Then Lemma \ref{lem:relation_f0f1f2} and
Proposition \ref{prop:kernel_pi} imply that \eqref{eq:condition_on_cyz}
is equivalent to the system of equations
\begin{equation}\label{eq:condition_on_cyz_2}
\begin{split}
a(y,z) - p(y,z) c(y,z) &= 0,\\
b(y,z) - q(y) c(y,z) &= 0.
\end{split}
\end{equation}

Set $m=n-k-3$ and recall the basis of $\CA_{(m)}$ in Lemma \ref{lem:basis_CA}. 
If $c(y,z) =y^m - m k y^{m-2} z \in \CA_{(m)}$,
then $y c(y,z) = y^{m+1} - m k y^{m-1} z$ is not contained in $\CA$.
Thus $b(y,z) - q(y) c(y,z) \ne 0$ for any $b(y,z) \in \CA_{(n-k-2)} $.
If $c(y,z) = y^{m-2j} z^j$ for some $2 \le j \le [m/2]$, then
the exponent of $z$ in each term appears in $p(y,z) c(y,z)$ and
$q(y) c(y,z)$ is at least $2$ and so $p(y,z) c(y,z)$ and
$q(y) c(y,z)$ are contained in $\CA$. Hence there exist
$a(y,z) \in \CA_{(n-k-1)} $ and $b(y,z) \in \CA_{(n-k-2)} $ which
satisfy \eqref{eq:condition_on_cyz_2}.
Thus $\dim I_{3(n)} = \dim I_{2(n)} + 1$ for $k+5 \le n \le 2k+2$.

Finally, we consider the case $n=2k+3$. In this case,
Proposition \ref{prop:kernel_pi} implies that 
\eqref{eq:condition_on_cyz} holds if and only if
\begin{equation}\label{eq:condition_on_cyz_4}
\begin{split}
a(y,z) - p(y,z) c(y,z) &= \beta f_1(y,z),\\
b(y,z) - q(y) c(y,z) &= -\beta f_0(y,z)
\end{split}
\end{equation}
for some $\beta \in \C$. 
If $c(y,z) =y^k - k^2 y^{k-2} z \in \CA_{(k)}$, then
$y c(y,z) \not\in \CA$ and so there is no $b(y,z) \in \CA_{(k+1)}$
which satisfies \eqref{eq:condition_on_cyz_4}.
If $c(y,z) = y^{k-2j} z^j$ for some $2 \le j \le [k/2]$, then as before
we see that there are $a(y,z) \in \CA_{(k+2)} $ and $b(y,z) \in \CA_{(k+1)} $
which satisfy \eqref{eq:condition_on_cyz_4}.

We have shown that
\begin{equation*}
I_{3(n)} = I_{2(n)} \oplus \C \big( y^{n-k-3} - (n-k-3) k y^{n-k-5} z \big) f_2(y,z)
\end{equation*}
for $k+5 \le n \le 2k+3$. This completes the proof of the assertion (2).
Now, the assertions (3) and (4) are clear from Lemma \ref{lem:weight_subspace_I2}.
\end{proof}

We are now in a position to discuss $I_4 = I_3 + \CA f_3(y,z)$.

\begin{lem}\label{lem:weight_subspace_I4}
$(1)$ $I_4 = I_3 \oplus \C f_3(y,z)$.

$(2)$ $\dim \CA/I_4 = k(k+1)/2$.
\end{lem}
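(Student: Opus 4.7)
The plan is to squeeze $I_4$ between $I_3 + \C f_3$ and $J \cap \CA$ and then exploit the known dimension $\dim \CA/(J \cap \CA) = k(k+1)/2$ from Lemma \ref{lem:J_cap_CA}(4). The outer inclusion $I_4 \subseteq J \cap \CA$ is immediate: each generator $f_r$ for $r = 0,1,2,3$ lies in $J$ by \eqref{eq:in-J}, and in $\CA$ because $f_0 \in \CA$ and $D$ preserves $\CA$ by Proposition \ref{prop:D_and_W3p1}. The inner inclusion $I_3 + \C f_3 \subseteq I_4$ is tautological. Combining these gives
\begin{equation*}
\dim \CA/(I_3 + \C f_3) \ge \dim \CA/I_4 \ge \dim \CA/(J \cap \CA) = k(k+1)/2.
\end{equation*}

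Once $f_3 \notin I_3$ is established, the leftmost dimension equals $\dim \CA/I_3 - 1 = k(k+1)/2$ by Lemma \ref{lem:weight_subspace_I3}(4); the squeeze then collapses to equalities throughout, which simultaneously yields $\dim \CA/I_4 = k(k+1)/2$ (assertion (2)) and forces the surjection $\CA/(I_3 + \C f_3) \twoheadrightarrow \CA/I_4$ to be an isomorphism, so $I_4 = I_3 + \C f_3 = I_3 \oplus \C f_3$ (assertion (1)).

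The main obstacle is therefore to prove $f_3 \notin I_3$. Any decomposition $f_3 = a_0 f_0 + a_1 f_1 + a_2 f_2$ with $a_i \in \CA$ homogeneous must have weights matching $\wt f_3 = k+4$, so $a_0 \in \CA_{(3)} = \C g_3$, $a_1 \in \CA_{(2)} = \C g_2$, and $a_2 \in \CA_{(1)} = 0$ by Lemma \ref{lem:basis_CA}(2). Applying $D$ to $f_2 = pf_0 + qf_1$ from Lemma \ref{lem:relation_f0f1f2}, using $Df_0 = f_1$ and $Df_1 = f_2$, produces the explicit decomposition
\begin{equation*}
f_3 = (Dp + qp) f_0 + (p + Dq + q^2) f_1,
\end{equation*}
and the ambiguity polynomial in Proposition \ref{prop:kernel_pi} would have negative weight $3 - (k+2)$ and thus vanishes, so this is the unique such decomposition. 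It then suffices to check that $p + Dq + q^2$ is not a scalar multiple of $g_2 = y^2 - 2kz$, which is a short direct computation using $p = -(k+1)(k+2)^2((k+1)y^2 + kz)$ and $q = (k+2)(2k+3)y$: the ratio of the two resulting coefficients fails to equal $-2k$ for $k \ge 5$.
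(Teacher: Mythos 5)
Your proof is correct and follows essentially the same route as the paper: both obtain the unique weight-$(k+4)$ decomposition $f_3 = (D\cdot p + pq)f_0 + (p + D\cdot q + q^2)f_1$ by applying $D$ to the relation of Lemma \ref{lem:relation_f0f1f2}, rule out $f_3 \in I_3$ by comparing the coefficients with $\CA_{(3)}$ and $\CA_{(2)}$ via Proposition \ref{prop:kernel_pi}, and conclude by squeezing $I_4$ between $I_3 + \C f_3(y,z)$ and $J \cap \CA$. The only (harmless) difference is that you verify a single coefficient, $p + D\cdot q + q^2 \notin \C g_2$, where the paper records that both coefficients fail to lie in $\CA$; one suffices.
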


\begin{proof}
We want to show that $I_3$ does not contain $f_3(y,z)$.
By Lemma \ref{lem:relation_f0f1f2}, the action of the differential operator
$D$ on $f_2(y,z)$ gives
\begin{align*}
f_3(y,z)
&= (D \cdot p(y,z)) f_0(y,z) + \big( p(y,z) + D \cdot q(y) \big) f_1(y,z) + q(y) f_2(y,z)\\
&= \big( D \cdot p(y,z) + p(y,z) q(y) \big) f_0(y,z)
+ \big( p(y,z) + D \cdot q(y) + q(y)^2 \big) f_1(y,z).
\end{align*}
Moreover, $D \cdot p(y,z) + p(y,z) q(y)$ is
\begin{equation*}
(k+1)(k+2)^2 \big( (k+1)(k+2)(2k+5) y^3 - 2k(k^2+3k+3)yz \big)
\end{equation*}
and $p(y,z) + D \cdot q(y) + q(y)^2$ is
\begin{equation*}
(k+2) \big( (k+2) (3k^2+12k+11) y^2 - k (k^2+7k+8) z \big),
\end{equation*}
neither of which is contained in $\CA$ by  Lemma \ref{lem:basis_CA}. 
On the other hand, we know that the weight $k+4$ subspace of $I_3$ is
\begin{equation*}
I_{3(k+4)} = \C (y^3 - 3kyz) f_0(y,z) + \C (y^2 - 2kz)  f_1(y,z),
\end{equation*}
since $\CA_{(1)} = 0$. Thus $I_{3(k+4)}$ does not contain  $f_3(y,z)$ by
Proposition \ref{prop:kernel_pi}.

This, together with Lemma \ref{lem:weight_subspace_I3} (4) implies that
$\dim \CA/I_4 \le k(k+1)/2$. Since $I_4 \subset J \cap \CA$,
It follows from Lemma \ref{lem:J_cap_CA} (4) that
$I_4 = J \cap \CA$ and we obtain the assertions.
\end{proof}

As mentioned in the proof of the above lemma, we have $I_4 = J \cap \CA$.
Since $I_4 \subset I \subset J \cap \CA$ by the definition of $I$, the
following theorem holds.

\begin{thm}\label{thm:CA_I}
$(1)$ $\dim \CA/I = k(k+1)/2$.

$(2)$ $I =  I_4$ and $J \cap \CA = I$.
\end{thm}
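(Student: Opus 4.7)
The plan is a short sandwich argument using two dimension counts that are already in hand. By construction, $I_4$ is generated over $\CA$ by $f_0, f_1, f_2, f_3$, while $I$ is generated by the full family $\{f_r\}_{r\ge 0}$, so the inclusion $I_4 \subset I$ is automatic. At the other end, each $f_r$ lies in $J$ by \eqref{eq:in-J}, and $f_r$ lies in $\CA$ because $f_0 = \frac{(-1)^{k+1}}{(k+1)!}\overline{\mathbf{u}^0}$ is the image of an element of $\CN$ and the operator $D$, when restricted to $\CA$, agrees up to the scalar $-1/6k$ with $W^3_1$ (Proposition \ref{prop:D_and_W3p1}), which preserves $\CA$ by Lemma \ref{Wr1_mod_C2_of_CN}. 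Hence $f_r \in J \cap \CA$ for every $r \ge 0$, giving $I \subset J\cap \CA$.

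Thus I would record the chain of inclusions
\begin{equation*}
I_4 \subset I \subset J \cap \CA
\end{equation*}
which produces surjections $\CA/I_4 \twoheadrightarrow \CA/I \twoheadrightarrow \CA/(J\cap\CA)$. The two outer quotients already have their dimensions computed: Lemma \ref{lem:weight_subspace_I4}(2) gives $\dim \CA/I_4 = k(k+1)/2$, and Lemma \ref{lem:J_cap_CA}(4) gives $\dim \CA/(J\cap\CA) = k(k+1)/2$. Equality of the outer finite dimensions forces the surjections to be isomorphisms, hence $I_4 = I = J \cap \CA$ and $\dim \CA/I = k(k+1)/2$.

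There is really no obstacle at this stage: all the non-trivial content was absorbed into the preceding section, namely the explicit identification of $J_{(n)} \cap \CA$ via Proposition \ref{prop:kernel_pi} (which used the algebraic independence of $f_0, f_1$ and the $C_2$-cofiniteness of $\C[y,z]/J$), and the explicit weight-by-weight analysis leading to Lemma \ref{lem:weight_subspace_I4}. The only point worth double-checking in the write-up is that $f_r \in \CA$ for every $r \ge 0$, since $\CA$ is only a proper subalgebra of $\C[y,z]$; but this follows inductively from $f_0 \in \CA$ together with the fact that $W^3_1$ preserves $R_\CN \cong \CA$, so the dimension squeeze is the entire argument.
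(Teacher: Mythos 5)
Your proposal is correct and follows essentially the same route as the paper: the chain $I_4 \subset I \subset J \cap \CA$ together with the equality of the finite dimensions $\dim \CA/I_4 = \dim \CA/(J\cap\CA) = k(k+1)/2$ from Lemmas \ref{lem:weight_subspace_I4} and \ref{lem:J_cap_CA} forces all three ideals to coincide. The paper in fact already records the identity $I_4 = J\cap\CA$ inside the proof of Lemma \ref{lem:weight_subspace_I4} by exactly this squeeze, so your write-up (including the check that each $f_r$ lies in $\CA$ because $W^3_1$ preserves $R_\CN\cong\CA$) matches the intended argument.
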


\section{Embedding of $R_{\CW}$ in $R_{L(k,0)}$}\label{sec:embedding_C2CW}

In this section we study Zhu's $C_2$-algebra $R_{\CW}$ of $\CW$. 
Let $\CU = V(k,0)(0)$, that is, $\CU = \{ v \in V(k,0)\,|\, h(0) v = 0\}$. 
Then $\CN \subset \CU \subset V(k,0)$. 
As before we denote the image of a subset $S$ of $V(k,0)$ in 
$R_{V(k,0)}$ by $\overline{S} = S + C_2(V(k,0))$. 
Note that $\overline{\CU} = \C[y,z]$. 

For $u, v \in V(k,0)$, we have $h(0) u_{-2} v = (h(0)u)_{-2} v + u_{-2} h(0)v$ 
by \eqref{eq:VOA-formula-1}. 
Thus $C_2(V(k,0))$ is invariant under $h(0)$ and $h(0)$ induces an action on 
$R_{V(k,0)}$ by $h(0)\cdot \overline{v} = \overline{h(0)v}$. 
Moreover, $h(0) u_{-1} v = (h(0)u)_{-1} v + u_{-1} h(0)v$ 
implies that $h(0)$ acts on $R_{V(k,0)} = \C[y_0,y_1,y_2]$ as a derivation. 
Actually, $h(0) \cdot y_0^p y_1^q y_2^r = 2(q-r) y_0^p y_1^q y_2^r$ 
by \eqref{eq:def-y0y1y2}. 

Since $L(k,0) = V(k,0)/\CJ$, we see that 
$R_{L(k,0)} = L(k,0)/C_2(L(k,0))$ is isomorphic to 
$V(k,0)/ (C_2(V(k,0)) + \CJ) \cong R_{V(k,0)}/\overline{\CJ}$. 
We identify $R_{L(k,0)}$ with $R_{V(k,0)}/\overline{\CJ}$ for simplicity 
and write $\overline{v} + \overline{\CJ}$, $v \in V(k,0)$ for an 
element of  $R_{L(k,0)}$.

A basis of $L(k,0)$ in known \cite{FKLMM, MP} (see \cite[Theorem 1.1]{Feigin} also.)  
Using the above convention, we see that 
\begin{equation}\label{eq:RL-basis}
y_0^p y_1^q y_2^r + \overline{\CJ}, \quad r \le k, p+r \le k, p+q \le k 
\end{equation}
form a basis of $R_{L(k,0)}$. 
Since $\CJ$ is an ideal of $V(k,0)$, its image $\overline{\CJ}$ in  $R_{V(k,0)}$ 
is invariant under $h(0)$ and so $h(0)$ acts on $R_{L(k,0)}$ by 
$h(0) \cdot (\overline{v} + \overline{\CJ}) = h(0) \cdot \overline{v} + \overline{\CJ}$. 
With respect to this action,  
the element $y_0^p y_1^q y_2^r + \overline{\CJ}$ is an eigenvector for 
$h(0)$ with eigenvalue $2(q-r)$. Hence  
$y^p z^q + \overline{\CJ}$, $q \le k$, $p+q \le k$ form a basis of 
$R_{L(k,0)}^{\h} = \{ x \in R_{L(k,0)} \,|\, h(0) \cdot x = 0\}$. 
We also note that 
\begin{equation}\label{eq:RLH}
R_{L(k,0)}^{\h} = (\overline{\CU} + \overline{\CJ})/\overline{\CJ} 
\cong \overline{\CU}/(\overline{\CU} \cap \overline{\CJ}).
\end{equation}

The element $y^p z^q + \overline{\CJ}$ is of weight $p + 2q$. Thus 
$y^p z^q + \overline{\CJ}$, $q \le k$, $p+q \le k$, $p + 2q = n$ 
form a basis of the weight $n$ subspace $(R_{L(k,0)}^{\h})_{(n)}$ of $R_{L(k,0)}^{\h}$. 
In particular, 
\begin{equation}\label{eq:dim-RLHN}
\dim (R_{L(k,0)}^{\h})_{(n)} = 
\begin{cases}
[n/2] + 1 & \text{if } n \le k,\\
[n/2] + 1 -n + k & \text{if } k+1 \le n \le 2k,\\
0 &  \text{if }  n \ge 2k+1.
\end{cases}
\end{equation}
Note that $[n/2] + 1 -n + k$ is equal to $k - [n/2] + 1$ or $k - [n/2] $ 
according as $n$ is even or odd for $k+1 \le n \le 2k$. 

Recall that $J = \C[y,z]f_0(y,z) + \C[y,z]f_1(y,z)$.

\begin{lem}\label{lem:J_in_CU}
$J = \overline{\CU} \cap \overline{\CJ}$.
\end{lem}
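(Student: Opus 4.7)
The plan is a dimension-count argument: show $J \subseteq \overline{\CU} \cap \overline{\CJ}$ and then show both $\overline{\CU}/J$ and $\overline{\CU}/(\overline{\CU} \cap \overline{\CJ})$ have dimension $(k+1)(k+2)/2$.

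First I would check the inclusion $J \subseteq \overline{\CU} \cap \overline{\CJ}$. By construction $J \subseteq \C[y,z] = \overline{\CU}$, so the real content is $J \subseteq \overline{\CJ}$. The singular vector $\bu^0 = f(0)^{k+1} e(-1)^{k+1}\1$ lies in $\CJ$ since $e(-1)^{k+1}\1$ is the generator of $\CJ$ and $\CJ$ is stable under every $a(n)$; hence $f_0(y,z)$, being a nonzero scalar multiple of $\overline{\bu^0}$, lies in $\overline{\CJ}$. The vector $W^3_1 \bu^0$ is again in $\CJ$, so $f_1(y,z)$, which is proportional to $\overline{W^3_1 \bu^0}$ by Proposition~\ref{prop:D_and_W3p1}, also lies in $\overline{\CJ}$. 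Next, $\overline{\CJ}$ is a (Poisson) ideal of $R_{V(k,0)}$: for $a \in V(k,0)$ and $v \in \CJ$ we have $a_{-1}v \in \CJ$, which translates to $\overline{a}\cdot \overline{v} \in \overline{\CJ}$. Combining these facts, every element of $\C[y,z]f_0(y,z) + \C[y,z]f_1(y,z)$ lies in $\overline{\CU}\cap\overline{\CJ}$.

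Now I would compute the dimension of $\overline{\CU}/(\overline{\CU} \cap \overline{\CJ})$ using the identification \eqref{eq:RLH}, namely $\overline{\CU}/(\overline{\CU} \cap \overline{\CJ}) \cong R_{L(k,0)}^{\h}$. The basis $y^p z^q + \overline{\CJ}$ with $p+q \le k$ (where the superfluous constraint $q \le k$ drops out) gives
\begin{equation*}
\dim R_{L(k,0)}^{\h} \;=\; \#\{(p,q) \in \Z_{\ge 0}^2 : p+q \le k\} \;=\; \binom{k+2}{2} \;=\; \frac{(k+1)(k+2)}{2}.
\end{equation*}
By Lemma~\ref{lem:weight_subspace_J}(4) we also have $\dim \C[y,z]/J = (k+1)(k+2)/2$, i.e.\ $\dim \overline{\CU}/J = (k+1)(k+2)/2$.

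Finally, since $J \subseteq \overline{\CU} \cap \overline{\CJ}$ and $\dim \overline{\CU}/J = \dim \overline{\CU}/(\overline{\CU}\cap \overline{\CJ})$, the two subspaces coincide, proving $J = \overline{\CU} \cap \overline{\CJ}$. The only delicate step is verifying that $\overline{\CJ}$ is stable under multiplication by elements of $R_{V(k,0)}$ (equivalently, by elements of $\overline{\CU}$), but this is immediate from the fact that $\CJ$ is a $\widehat{sl}_2$-submodule and hence an ideal of the vertex operator algebra $V(k,0)$; the dimension counts themselves have already been done in the preceding lemmas and in the paragraph describing the basis \eqref{eq:RL-basis}, so no further combinatorial work is required.
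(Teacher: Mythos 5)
Your proof is correct and follows essentially the same route as the paper: establish the inclusion $J \subseteq \overline{\CU}\cap\overline{\CJ}$ via the singular vector and the ideal property of $\overline{\CJ}$, then conclude by a dimension count based on \eqref{eq:RLH} and Lemma \ref{lem:weight_subspace_J}. The only cosmetic difference is that you compare total codimensions (both equal to $(k+1)(k+2)/2$), whereas the paper matches the graded dimensions $\dim J_{(n)} = \dim(\overline{\CU}\cap\overline{\CJ})_{(n)}$ weight by weight; these are equivalent here since both quotients are finite dimensional.
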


\begin{proof}
By the definition of $\CJ$, it contains $\bu^0$ and $W^3_1 \bu^0$. 
Hence $\overline{\CJ}$ contains $f_0(y,z)$ and $f_1(y,z)$. 
If $v \in \CJ$, then $h(-1)^p e(-1)^q f(-1)^q v \in \CJ$ and 
$y^p z^q \overline{v} \in \overline{\CJ}$ for $p, q \ge 0$. 
Hence $J \subset  \overline{\CJ}$. 
We compare the dimension of the weight $n$ subspaces of 
$J$ and $\overline{\CU} \cap \overline{\CJ}$. 
It follows from \eqref{eq:RLH} that $\dim (\overline{\CU} \cap \overline{\CJ})_{(n)}$ 
is the difference of $\dim \overline{\CU}_{(n)}$ and $\dim (R_{L(k,0)}^{\h})_{(n)}$. 
Since $\dim \overline{\CU}_{(n)} = [n/2] + 1$ by Lemma \ref{lem:basis_CA}, 
we see that $\dim J_{(n)}$ coincides with $\dim (\overline{\CU} \cap \overline{\CJ})_{(n)}$ 
for all $n \ge 0$ by \eqref{eq:dim-RLHN} and Lemma \ref{lem:weight_subspace_J}. 
Thus the assertion holds.
\end{proof}

\begin{lem}\label{lem:I_in_CN}
$I = \overline{\CN} \cap \overline{\CJ} = \overline{\CI}$.
\end{lem}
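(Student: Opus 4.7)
The plan is to prove the two equalities by sandwiching: I will show the chain of inclusions $I \subseteq \overline{\CI} \subseteq \overline{\CN} \cap \overline{\CJ} \subseteq I$, so that all three sets must coincide.

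For the first inclusion $I \subseteq \overline{\CI}$, I would first observe that $\overline{\CI}$ is an ideal of the subalgebra $\CA = \overline{\CN}$ of $R_{V(k,0)}$, because $\CI$ is a VOA-ideal of $\CN$ and hence $u_{-1}\CI \subseteq \CI$ for all $u \in \CN$. Next, since $\bu^0 \in \CI$ and $W^3 \in \CN$, iterating gives $(W^3_1)^r \bu^0 \in \CI$ for every $r \geq 0$, so by the defining relation $f_r(y,z) = \text{const}\cdot\overline{(W^3_1)^r \bu^0}$ (Proposition~\ref{prop:D_and_W3p1} together with the identification from Section~\ref{sec:CN_mod_C2_Weyl_module}), each $f_r(y,z)$ lies in $\overline{\CI}$. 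Since $I$ is by definition the $\CA$-ideal generated by these $f_r$, we get $I \subseteq \overline{\CI}$.

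The second inclusion $\overline{\CI} \subseteq \overline{\CN} \cap \overline{\CJ}$ is immediate from $\CI = \CN \cap \CJ$ (Lemma 3.1 of \cite{DLY}, quoted in Section~\ref{sec:preliminaries}), by passing to images in $R_{V(k,0)}$.

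For the third inclusion $\overline{\CN} \cap \overline{\CJ} \subseteq I$, I would use that $\CN \subseteq \CU$, so $\overline{\CN} \cap \overline{\CJ} \subseteq \overline{\CU} \cap \overline{\CJ}$, and apply Lemma~\ref{lem:J_in_CU} to identify the right-hand side with $J$. Since also $\overline{\CN} \cap \overline{\CJ} \subseteq \overline{\CN} = \CA$, we obtain $\overline{\CN} \cap \overline{\CJ} \subseteq J \cap \CA$. By Theorem~\ref{thm:CA_I}(2), $J \cap \CA = I$, completing the chain.

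Once the three inclusions are in place, the conclusion $I = \overline{\CN} \cap \overline{\CJ} = \overline{\CI}$ follows formally. The main conceptual point — and the only step requiring genuine work beyond bookkeeping — is the identification of the $f_r$ with the images of $(W^3_1)^r \bu^0$ and the verification that $\overline{\CI}$ is an $\CA$-ideal; both of these have been set up in the previous sections, so the proof itself should be short.
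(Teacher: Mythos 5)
Your proposal is correct and uses exactly the same ingredients as the paper's proof (Lemma \ref{lem:J_in_CU}, Theorem \ref{thm:CA_I}(2), the identity $\CI = \CN \cap \CJ$, and the fact that $(W^3_1)^r\bu^0 \in \CI$ so that $f_r \in \overline{\CI}$); the only difference is cosmetic, in that you organize the argument as a cyclic chain of three inclusions while the paper first establishes $\overline{\CN}\cap\overline{\CJ} = I$ outright and then sandwiches $\overline{\CI}$ between $I$ and that intersection. Your explicit remark that $\overline{\CI}$ is an $\CA$-ideal (needed for $I \subseteq \overline{\CI}$) is a point the paper leaves implicit.
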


\begin{proof}
Recall that $\overline{\CN} = \CA$ by the definition of $\CA$ 
and $J \cap \CA = I$ by Theorem \ref{thm:CA_I}. 
Then Lemma \ref{lem:J_in_CU} implies $I = \overline{\CN} \cap \overline{\CJ}$. 
We also have $\overline{\CI} \subset \overline{\CN} \cap \overline{\CJ}$, 
for $\CI = \CN \cap \CJ$ by \cite[Lemma 3.1]{DLY}. 
The maximal ideal $\CI$ of $\CN$ contains $(W^3_1)^r \bu^0$ and so 
$\overline{\CI}$ contains $f_r(y,z)$, $r \ge 0$. 
Since $I$ is the ideal of $\overline{\CN}$ generated by $f_r(y,z)$, $0 \le r \le 3$, 
we have  $I \subset \overline{\CI}$. 
This proves the lemma.
\end{proof}

\begin{thm}\label{thm:embedding_RW_in_RL}
$(1)$ $\dim R_{\CW} = k(k+1)/2$. In particular, $\CW$ is $C_2$-cofinite.

$(2)$ $C_2(\CW) = \CW \cap C_2(L(k,0))$ and $R_{\CW} \hookrightarrow R_{L(k,0)}$.
\end{thm}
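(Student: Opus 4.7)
The plan is to reduce both claims to the key computation that $\CA \cap \overline{\CJ} = I$ and $\dim \CA/I = k(k+1)/2$, which were already established in Theorem \ref{thm:CA_I}, combined with Lemma \ref{lem:I_in_CN} identifying $I$ with $\overline{\CI}$.

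First I would set up the identifications. Since $\CW = \CN/\CI$, one has $C_2(\CW) = (C_2(\CN)+\CI)/\CI$, so that $R_\CW \cong \CN/(C_2(\CN)+\CI)$. The isomorphism $\psi\colon R_\CN \xrightarrow{\sim} \CA$ of Theorem \ref{thm:embedding_RN_in_RV} sends the image of $\CI$ in $R_\CN$ to $\overline{\CI}$, which by Lemma \ref{lem:I_in_CN} equals $I$. Hence $R_\CW \cong \CA/I$, and Theorem \ref{thm:CA_I}(1) gives $\dim R_\CW = k(k+1)/2$, proving assertion (1) and $C_2$-cofiniteness.

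For assertion (2), the inclusion $\CW \hookrightarrow L(k,0)$ induces a natural map $\eta\colon R_\CW \to R_{L(k,0)}$; I would check injectivity, which simultaneously gives $C_2(\CW) = \CW \cap C_2(L(k,0))$. Identifying $R_{L(k,0)}$ with $R_{V(k,0)}/\overline{\CJ}$ as in the text, an element of $\Ker\eta$ is represented by some $v \in \CN$ whose image $\overline{v} \in \CA$ lies in $\overline{\CJ}$. Then $\overline{v} \in \CA \cap \overline{\CJ} = I = \overline{\CI}$ by Theorem \ref{thm:CA_I}(2) and Lemma \ref{lem:I_in_CN}, so $v = u + c$ with $u \in \CI$ and $c \in C_2(V(k,0))$. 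Since $u \in \CI \subset \CN$ and $v \in \CN$, we get $c = v-u \in \CN \cap C_2(V(k,0)) = C_2(\CN)$ (this equality is Theorem \ref{thm:embedding_RN_in_RV}(2)), hence $v \in C_2(\CN) + \CI$, so $v$ represents $0$ in $R_\CW$.

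No real obstacle remains: the combinatorics of $\CA$, the singular vector calculation that produced $f_0,f_1$, and the dimension count $\dim \CA/I = k(k+1)/2$ have all been carried out in the preceding sections, and the present theorem is essentially a packaging step. The only item requiring care is the bookkeeping between the three ideals $C_2(\CN)$, $C_2(V(k,0))$, and $\CJ$, which is handled cleanly by the identity $C_2(\CN) = \CN \cap C_2(V(k,0))$ from Theorem \ref{thm:embedding_RN_in_RV}(2) and the identity $\CI = \CN \cap \CJ$ recalled from \cite{DLY}.
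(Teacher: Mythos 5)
Your proposal is correct and follows essentially the same route as the paper: both parts rest on the identifications $R_\CW \cong \CA/\overline{\CI}$ via Theorem \ref{thm:embedding_RN_in_RV}, together with $\overline{\CN}\cap\overline{\CJ}=I=\overline{\CI}$ (Lemma \ref{lem:I_in_CN}) and $\dim\CA/I = k(k+1)/2$ (Theorem \ref{thm:CA_I}). Your direct verification that the kernel of $R_\CW \to R_{L(k,0)}$ vanishes is just an unpacked form of the paper's isomorphism chain $\overline{\CN}/\overline{\CI}\cong(\overline{\CN}+\overline{\CJ})/\overline{\CJ}\subset R_{L(k,0)}$, so nothing is missing.
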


\begin{proof}
The map $\psi : R_{\CN} \rightarrow R_{V(k,0)}$; $v + C_2(\CN) \mapsto v + C_2(V(k,0))$ 
is injective by Theorem \ref{thm:embedding_RN_in_RV}. 
Since $\psi (R_{\CN}) = \overline{\CN}$ and  $\psi (\CI + C_2(\CN)) = \overline{\CI}$, 
it follows that $\overline{\CN}/ \overline{\CI} \cong 
\CN/(\CI + C_2(\CN)) \cong R_{\CW}$. 
Then Theorem \ref{thm:CA_I} and Lemma \ref{lem:I_in_CN} imply the first assertion. 

Moreover,
$\overline{\CN}/ \overline{\CI} \cong (\overline{\CN} + \overline{\CJ})/ \overline{\CJ} 
\subset (\overline{\CU} + \overline{\CJ})/ \overline{\CJ} 
= R_{L(k,0)}^{\h} \subset R_{L(k,0)}$ by Lemma \ref{lem:I_in_CN} and \eqref{eq:RLH}. 
Thus $R_{\CW} \hookrightarrow R_{L(k,0)}$ and the second assertion holds.
\end{proof}

We know $\CN \cap \CJ = \CI$ and $\CN \cap C_2(V(k,0)) = C_2(\CN)$. 
Moreover, Lemma \ref{lem:I_in_CN} implies that 
\begin{equation*}
(\CN +  C_2(V(k,0))) \cap (\CJ + C_2(V(k,0))) = \CI + C_2(V(k,0)).
\end{equation*}
Taking the intersection with $\CN$, we obtain 
$\CN \cap (\CJ + C_2(V(k,0))) = \CI + C_2(\CN)$. 

Recall that $\CU = V(k,0)(0) = M(k,0) \otimes \CN$, 
where $M(k,0) = M_{\widehat{\h}}(k,0)$ is the Heisenberg vertex operator algebra. 
Thus Zhu's $C_2$-algebra of  $\CU$ is 
$R_\CU = R_{M(k,0)} \otimes R_\CN$, which is isomorphic to $\C[y] \otimes \CA$. 
Hence $\CU \cap C_2(V(k,0)) \ne C_2(\CU)$, for $\overline{\CU} = \C[y,z]$.

By Theorem \ref{thm:CA_I} and Lemma \ref{lem:I_in_CN}, we have $I_4 = \overline{\CI}$. 
This implies that the four vectors 
$\bu^0$, $W^3_1 \bu^0$, $(W^3_1)^2 \bu^0$ and $(W^3_1)^3 \bu^0$ 
carry sufficient information about the maximal ideal $\CI$ modulo $C_2(\CN)$ 
in a sense. Recall that the three null fields $\bv^0$, $W^3_1 \bv^0$ and 
$(W^3_1)^2 \bv^0$ are sufficient for the proof of Theorem \ref{thm:embedding_RN_in_RV}. 
The information coming from $(W^3_1)^r \bv^0$, $r = 0,1,2$ and that from 
$(W^3_1)^r \bu^0$, $r = 0,1,2,3$ are independent of each other 
modulo $C_2(\CN)$.

\section{Zhu's algebra $A(\CW)$}\label{sec:Zhu_alg_CW}

In this section we study Zhu's algebra $A(\CW)$ of $\CW$. 
We use the filtrations discussed in Section \ref{sec:basic_properties_C2}.
Since $\CW$ is the commutant of the Heisenberg vertex operator algebra 
$M_{\widehat{\h}}(k,0)$ in $L(k,0)$, 
the action of the operator $(\omega_{\mraff})_1$ on $\CW$ agrees with that of 
the operator $W^2_1$, where $\omega_{\mraff}$ denotes the conformal vector 
of $L(k,0)$. 
Thus the weight $n$ subspace $\CW_{(n)}$ of $\CW$ is 
contained in the weight $n$ subspace $L(k,0)_{(n)}$ of $L(k,0)$. 
Since $\CW \subset L(k,0)$, we have $O(\CW) \subset O(L(k,0))$ and 
$A(\CW) \rightarrow A(L(k,0))$; $a + O(\CW) \mapsto a + O(L(k,0))$ 
is an algebra homomorphism. 
This homomorphism maps $F_p A(\CW)$ into $F_p A(L(k,0))$, and so 
it induces a homomorphism $\gr A(\CW) \rightarrow \gr A(L(k,0))$ of 
$\Z$-graded Poisson algebras. 

Likewise, the assignment $a + C_2(\CW)_{(p)} \mapsto a + C_2(L(k,0))_{(p)}$ 
for $a \in \CW_{(p)}$ 
induces a homomorphism $R_\CW \rightarrow R_{L(k,0)}$ of 
$\Z$-graded Poisson algebras. 
Thus we have a commutative diagram
\begin{equation}\label{eq:CD}
\begin{CD}
R_{\CW} @>>> R_{L(k,0)}\\
@VVV            @VVV\\
\gr A(\CW) @>>> \gr A(L(k,0))
\end{CD}
\end{equation}
of $\Z$-graded Poisson algebras by Proposition \ref{prop:gr_AV_onto_gr_RV}. 
It is known that $\dim R_{L(k,0)} = (k+1)(k+2)(2k+3)/6$ and the homomorphism  
$R_{L(k,0)} \rightarrow \gr A(L(k,0))$ is an isomorphism 
(\cite{Feigin}, \cite[Section IV.2]{GG}). 
Since the homomorphism $R_{\CW} \rightarrow R_{L(k,0)}$ is injective by 
Theorem \ref{thm:embedding_RW_in_RL}, the commutative diagram \eqref{eq:CD} 
implies that the surjective homomorphism $R_{\CW} \rightarrow \gr A(\CW)$ 
is in fact an isomorphism and that the homomorphism 
$\gr A(\CW) \rightarrow \gr A(L(k,0))$ is injective. 
Hence $\dim R_{\CW} = \dim A(\CW)$ and  
$A(\CW) \rightarrow A(L(k,0))$; $a + O(\CW) \mapsto a + O(L(k,0))$ is injective. 
Thus the following theorem holds.
\begin{thm}\label{thm:dim_Zhu_CW}
$(1)$ $\dim A(\CW)  = k(k+1)/2$.

$(2)$ $O(\CW) = \CW \cap O(L(k,0))$ and $A(\CW) \hookrightarrow A(L(k,0))$.  
\end{thm}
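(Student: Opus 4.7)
The plan is to deduce Theorem \ref{thm:dim_Zhu_CW} from the commutative diagram \eqref{eq:CD} of $\Z$-graded Poisson algebras, together with two inputs already in place: Theorem \ref{thm:embedding_RW_in_RL}, which gives injectivity of the top row $R_{\CW} \hookrightarrow R_{L(k,0)}$, and the known isomorphism $R_{L(k,0)} \xrightarrow{\sim} \gr A(L(k,0))$ from \cite{Feigin, GG}. The general surjection $R_V \twoheadrightarrow \gr A(V)$ of Proposition \ref{prop:gr_AV_onto_gr_RV} provides the two vertical arrows in the diagram.

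First I would chase the diagram. The composite $R_{\CW} \to R_{L(k,0)} \xrightarrow{\sim} \gr A(L(k,0))$ is injective, so when it is factored through the other two sides of the square as $R_{\CW} \twoheadrightarrow \gr A(\CW) \to \gr A(L(k,0))$, the left vertical must be injective (and therefore an isomorphism, being already surjective) and the bottom horizontal must be injective. Since $A(\CW)$ is isomorphic to $\gr A(\CW)$ as a vector space, this proves $\dim A(\CW) = \dim R_{\CW} = k(k+1)/2$ by Theorem \ref{thm:embedding_RW_in_RL}(1), settling part~(1).

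For part~(2), the algebra map $A(\CW) \to A(L(k,0))$; $a + O(\CW) \mapsto a + O(L(k,0))$ is filtration preserving, and the diagram chase has just shown its associated graded map is injective. Since both filtrations $\{F_p A(\CW)\}$ and $\{F_p A(L(k,0))\}$ are separated and exhaustive (because $\CW$ and $L(k,0)$ are $\N$-graded with finite-dimensional weight spaces), standard filtered-to-graded lifting then shows the map itself is injective, giving the embedding $A(\CW) \hookrightarrow A(L(k,0))$. The identity $O(\CW) = \CW \cap O(L(k,0))$ is the statement that the kernel of this map is zero, combined with the obvious inclusion $O(\CW) \subset \CW \cap O(L(k,0))$ inherited from $O(\CW) \subset O(L(k,0))$.

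I do not expect any genuine obstacle here: the whole argument is a formal consequence of Theorem \ref{thm:embedding_RW_in_RL} and the isomorphism $R_{L(k,0)} \cong \gr A(L(k,0))$. The only point that deserves care is verifying that the commutative diagram \eqref{eq:CD} really is a diagram of $\Z$-graded Poisson algebras, i.e.\ that the inclusion $\CW \hookrightarrow L(k,0)$ respects the weight gradings used to build the filtrations on both Zhu algebras; this is true because $W^2_1$ on $\CW$ coincides with the restriction of $L_{\mraff}(0) = (\omega_{\mraff})_1$ on $L(k,0)$, as noted at the start of this section.
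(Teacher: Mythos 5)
Your proposal is correct and follows essentially the same route as the paper: both arguments chase the commutative diagram \eqref{eq:CD}, using the injectivity of $R_{\CW} \to R_{L(k,0)}$ from Theorem \ref{thm:embedding_RW_in_RL} together with the isomorphism $R_{L(k,0)} \cong \gr A(L(k,0))$ of \cite{Feigin, GG} to conclude that $R_{\CW} \to \gr A(\CW)$ is an isomorphism and $\gr A(\CW) \to \gr A(L(k,0))$ is injective. Your only addition is to spell out the filtered-to-graded lifting step (using that the filtrations are bounded below and exhaustive) that the paper leaves implicit when passing from injectivity of $\gr A(\CW) \to \gr A(L(k,0))$ to injectivity of $A(\CW) \to A(L(k,0))$.
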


Next, we show that $A(\CW)$ is semisimple. Recall that $A(\CW)$ is commutative 
\cite[Lemma 2.6]{DLY}. 
Since $A(\CW) \rightarrow A(L(k,0))$; $a + O(\CW) \mapsto a + O(L(k,0))$ is injective, 
it is sufficient to show that $A(\CW)$ acts semisimply on $A(L(k,0))$. 
By \cite[Theorem 3.1.3]{FZ}, $A(L(k,0))$ is a semisimple algebra and 
as an $sl_2$-module it is a direct sum of finite dimensional irreducible 
modules $U^i$ with highest weight $i$,  $0 \le i \le k$. 
Moreover, \cite[Proposition 4.5]{DLY} implies that $A(\CW)$ 
acts semisimply on $U^i$. 
Thus $A(\CW)$ acts semisimply on $A(L(k,0))$ as desired.

In \cite[Section 4]{DLY}, $k(k+1)/2$ irreducible $\CW$-modules $M^{i,j}$, 
$0 \le i \le k$, $0 \le j \le i-1$ were constructed. 
We now show that these irreducible $\CW$-modules are inequivalent. 
Let $\C v$ be an irreducible $A(\CW)$-module. Then the induced module 
$A(L(k,0)) \otimes_{A(\CW)} \C v$ is a finite dimensional 
$A(L(k,0))$-module and it decomposes into a direct sum of $U^i$'s.  
It follows that $\C v$ appears as a direct summand of the 
restriction $\Res^{A(L(k,0))}_{A(\CW)} U^i$ of some $U^i$ to 
an $A(\CW)$-module. 
Such an irreducible $A(\CW)$-module must be isomorphic to 
the top level $\C v^{i,j}$ of $M^{i,j}$ studied in \cite[Section 4]{DLY} for some $j$. 
That is, $\C v \cong \C v^{i,j}$ as $A(\CW)$-modules for some $i, j$. 
Since $A(\CW)$ is commutative, semisimple and of dimension $k(k+1)/2$, 
it has exactly  $k(k+1)/2$ inequivalent irreducible modules. 
Hence the $k(k+1)/2$ irreducible $A(\CW)$-modules $\C v^{i,j}$, 
$0 \le i \le k$, $0 \le j \le i-1$ are all inequivalent.
Therefore, the following theorem is proved.

\begin{thm}\label{thm:Zhu_alg_CW}
$(1)$ $A(\CW)$ is semisimple.

$(2)$ The $k(k+1)/2$ irreducible $\CW$-modules  $M^{i,j}$, 
$0 \le i \le k$, $0 \le j \le i-1$ constructed in \cite{DLY} 
form a complete set of isomorphism classes of irreducible $\CW$-modules.
\end{thm}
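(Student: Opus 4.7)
The plan is to deduce both assertions from Theorem \ref{thm:dim_Zhu_CW} together with known results about $A(L(k,0))$ and the modules $M^{i,j}$. The main leverage is the embedding $A(\CW) \hookrightarrow A(L(k,0))$ combined with commutativity of $A(\CW)$ (which is already noted via \cite[Lemma 2.6]{DLY}); once semisimplicity is in hand, part (2) becomes an exercise in dimension counting.

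For part (1), I would argue as follows. Since $A(\CW)$ is commutative and embeds into $A(L(k,0))$, it suffices to exhibit one faithful semisimple $A(\CW)$-module, and then semisimplicity of $A(\CW)$ itself follows (a commutative algebra with a faithful semisimple representation is semisimple). A natural candidate is $A(L(k,0))$ viewed as a left $A(\CW)$-module. By \cite[Theorem 3.1.3]{FZ}, $A(L(k,0))$ is semisimple and decomposes as an $sl_2$-module as $\bigoplus_{i=0}^{k} U^{i}$, where $U^{i}$ is the finite-dimensional irreducible $sl_2$-module of highest weight $i$. The key input is \cite[Proposition 4.5]{DLY}, which shows that $A(\CW)$ acts semisimply on each summand $U^{i}$. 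Consequently $A(\CW)$ acts semisimply on the direct sum $A(L(k,0))$. Since the $A(\CW)$-action on $A(L(k,0))$ is faithful (it is the restriction of the regular representation of $A(L(k,0))$ along the inclusion), $A(\CW)$ is itself semisimple.

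For part (2), the strategy is a counting argument. Since $A(\CW)$ is commutative, semisimple and $\dim A(\CW) = k(k+1)/2$ by Theorem \ref{thm:dim_Zhu_CW}, it has exactly $k(k+1)/2$ isomorphism classes of irreducible modules, each one-dimensional. Now let $\C v$ be any irreducible $A(\CW)$-module. Induce to $A(L(k,0)) \otimes_{A(\CW)} \C v$; this is a finite-dimensional $A(L(k,0))$-module, hence, by semisimplicity of $A(L(k,0))$ and the classification of its simples, it is a direct sum of copies of $U^{i}$, $0 \le i \le k$. Restricting back, $\C v$ must appear as an $A(\CW)$-summand of some $\mathrm{Res}^{A(L(k,0))}_{A(\CW)} U^{i}$; but the $A(\CW)$-summands of $U^{i}$ were identified in \cite[Section 4]{DLY} as the tops $\C v^{i,j}$ of the constructed irreducible modules $M^{i,j}$, $0 \le j \le i-1$. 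Thus each irreducible $A(\CW)$-module is of the form $\C v^{i,j}$ for some $(i,j)$ in the stated range. There are at most $\sum_{i=0}^{k} i = k(k+1)/2$ such pairs, matching the number of irreducibles; this forces all the $\C v^{i,j}$ to be mutually inequivalent and to exhaust the irreducibles of $A(\CW)$. By Zhu's correspondence between irreducible $A(\CW)$-modules and irreducible admissible $\CW$-modules, the $M^{i,j}$ form a complete set.

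The main obstacle I anticipate is not conceptual but bookkeeping: ensuring that the identification of $A(\CW)$-invariant lines in each $U^{i}$ really yields exactly $i$ distinct summands $\C v^{i,j}$ (the bound $0 \le j \le i-1$). This relies on \cite[Proposition 4.5]{DLY} and the explicit action of the zero modes $o(W^{s})$ on the top levels $\C v^{i,j}$ computed in \cite{DLY}. Once that input is granted, the argument above is essentially a soft argument: semisimplicity plus a dimension count.
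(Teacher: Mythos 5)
Your proposal is correct and follows essentially the same route as the paper: semisimplicity is deduced from the faithful action of $A(\CW)$ on $A(L(k,0))=\bigoplus_{i=0}^{k}U^{i}$, which is semisimple on each $U^{i}$ by \cite[Proposition 4.5]{DLY}, and part (2) is the same induction--restriction argument identifying every irreducible $A(\CW)$-module with some top level $\C v^{i,j}$, followed by the dimension count $\dim A(\CW)=k(k+1)/2$. No substantive differences.
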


We make some remarks about Conjecture 4.6 of \cite{DLY}. 
Recall that $W^2$ and $\CW$ are denoted by $\omega$ and $K_0 = M^{0,0}$, 
respectively in \cite{DLY} (see \cite[Theorem 4.1]{DLWY} also.) 
The above theorem gives an affirmative answer to part of the conjecture. 
The $C_2$-cofiniteness of $\CW$ is obtained in 
Theorem \ref{thm:embedding_RW_in_RL}. 

As to Zhu's algebra $A(\CW)$, it is known to be commutative and generated by 
four elements $[W^s] = W^s + O(\CW)$, $s = 2,3,4,5$ \cite[Lemma 2.6]{DLY}. 
In the case $k = 16$, we can verify by \cite[Proposition 4.5]{DLY} that 
the two vectors $v^{i,j}$, $(i,j) = (2,1)$ and $(8,0)$ have 
common eigenvalues for $o(W^s)$, $s = 2,3,5$, but not for  $o(W^4)$. 
In the case $k = 100$, the two vectors  $v^{i,j}$, $(i,j) = (12,1)$ and $(12,11)$ have 
common eigenvalues for $o(W^s)$, $s = 2,3,4$, but not for  $o(W^5)$. 
These examples imply that $[W^4]$ and $[W^5]$ are necessary to generate $A(\CW)$. 
Thus the latter half of Conjecture 4.6 (2) of \cite{DLY} is incorrect.

\section{Projectivity of $\CW$}\label{sec:projectivity_CW}

In this section we show that the parafermion vertex operator algebra 
$\CW$ is projective, that is, 
any $\CW$-module extension of the $\CW$-module $\CW$ itself splits. 
For this purpose, we need the action of  $o(W^2)$ and $o(W^3)$ 
on the top level  $\C v^{i,j}$ of the $k(k+1)/2$ irreducible $\CW$-modules 
$M^{i,j}$ \cite[Proposition 4.5]{DLY}. 
The argument here is a standard one.

Let 
\begin{equation}\label{eq:extention}
0 \rightarrow M \rightarrow N \stackrel{f}{\rightarrow} \CW 
\rightarrow 0
\end{equation}
be a short exact sequence of $\CW$-modules with $M$ an irreducible module. 
We want to show that the short exact sequence splits.  
By Theorem \ref{thm:Zhu_alg_CW}, $M \cong M^{i,j}$ for some $i,j$. 
Then the top level of $M$ is one dimensional and its weight is a nonnegative integer.
Thus the top level of $N$ is the weight $0$ subspace $N_{(0)}$.

Suppose the top level of $M$ is of weight $0$. Then $M$ is isomorphic to 
$M^{0,0} = \CW$ by \cite[Proposition 4.5]{DLY} and $\dim N_{(0)} = 2$. 
Since Zhu's algebra $A(\CW)$ of $\CW$ is commutative and semisimple 
by Theorem \ref{thm:Zhu_alg_CW}, 
$N_{(0)}$ is a direct sum of one dimensional  $A(\CW)$ -modules, say 
$N_{(0)} = \C v^1 \oplus \C v^2$. Let $N^r$ be the submodule of $N$ 
generated by $v^r$, $r = 1,2$. Then 
$N^r = \spn_\C \{ a_n v^r\,|\, a \in \CW, n \in \Z \}$. 
Since $\wt v^r = 0$, $a_n v^r$ is contained in $N_{(0)}$ only if the operator 
$a_n$ is of weight $0$, that is $a_n = o(a)$. Hence $N^r \cap N_{(0)} = \C v^r$. 
This implies that $N = N^1 \oplus N^2$. 
Thus the extension \eqref{eq:extention} splits.

Next, suppose the weight of the top level of $M$ is greater than $1$. 
In this case the weight $1$ subspace $N_{(1)}$ of $N$ is trivial, for 
$N/M \cong \CW$ and $\CW_{(1)} = 0$. 
Take a nonzero element $v \in N_{(0)}$ such that $f(v) = \1$. 
Then $L(-1)v = 0$ where $L(-1) = W^2_0$, for $L(-1)v$ is of weight $1$. 
Hence $v$ is a vacuum-like vector by \cite[Corollary 4.7.6]{LL}, 
\cite[Proposition 3.3]{Li} and 
the map $g: a \mapsto a_{-1}v$ is a homomorphism of $\CW$-modules from 
$\CW$ to $N$ by \cite[Proposition 4.7.7]{LL}. 
Thus the extension \eqref{eq:extention} splits. 

Finally, suppose the top level of $M$ is of weight $1$. 
Then $\dim N_{(0)} =  \dim N_{(1)} = 1$. 
Take a nonzero element $v \in N_{(0)}$ such that $f(v) = \1$. Then
\begin{equation}\label{eq:top-level-condition}
W^s_{s-1+n} v = 0 \quad \text{for } n \ge 0, s = 2,3,4,5.
\end{equation} 

We want to study the action of  $o(W^3) = W^3_2$ on the four 
elements $W^2_0 v$, $W^3_1 v$, $W^4_2 v$ and $W^5_3 v$ of weight $1$. 
For this, we use the expression of $W^3_n W^s$ as a linear combination of 
elements of the form \eqref{eq:normal-form} in \cite[Appendix B]{DLY} 
and the properties of the Virasoro operators $L(-1) = W^2_0$ and $L(0) = W^2_1$. 
We can calculate the action of  $o(W^3)$ on those four elements by the 
formulas \eqref{eq:VOA-formula-1}, \eqref{eq:VOA-formula-2}, 
\eqref{eq:Lm1-derivative} and 
the conditions \eqref{eq:top-level-condition} on the vector $v$.

For instance, the action of $o(W^3)$ on $W^4_2 v$ is 
$o(W^3) W^4_2 v = W^3_2 W^4_2 v = [W^3_2, W^4_2] v$, since $W^3_2 v = 0$. 
It follows from \eqref{eq:VOA-formula-1} that 
\begin{equation*}
[W^3_2, W^4_2] = (W^3_0 W^4)_4 + 2(W^3_1 W^4)_3 + (W^3_2 W^4)_2.
\end{equation*}
By \cite[Appendix B]{DLY} , $W^3_0 W^4$ is a linear combination of 
$W^2_{-2}W^3$, $W^2_{-1} W^3_{-2}\1$, $W^3_{-4}\1$ and $W^5_{-2}\1$. Since 
\begin{equation*}
(W^2_{-2}W^3)_4 = \sum_{i \ge 0} (-1)^i \binom{-2}{i}
\big( W^2_{-2-i} W^3_{4+i} -  W^3_{2-i} W^2_i \big), 
\end{equation*}
we have $(W^2_{-2}W^3)_4 v = - W^3_2 W^2_0 v = -2 W^3_1 v$ by 
\eqref{eq:Lm1-derivative} and the conditions \eqref{eq:top-level-condition}. 
Likewise, $(W^2_{-1} W^3_{-2}\1)_4 v = -6W^3_1 v$, 
$(W^3_{-4}\1)_4 v = -4W^3_1 v$ and $(W^5_{-2}\1)_4 v = -4W^5_3 v$. 
In this way we can express $o(W^3) W^4_2 v$ as a linear combination of the four 
elements $W^s_{s-2} v$, $s = 2,3,4,5$.
Note that the four elements $W^s_{s-2} v$, $s = 2,3,4,5$ are not linearly independent, 
for $\dim N_{(1)} = 1$. 

In fact, we have $o(W^3)W^r_{r-2} v = \sum_{s = 2}^5 a_{rs} W^s_{s-2} v$ with
\begin{align*}
a_{23} &= 2,\\
a_{32} &= 54 k^3 (k -2) (k + 2) (3 k + 4)/(16 k + 17),\\
a_{34} &= 18 k (2 k + 3)/(16 k + 17),\\
a_{43} &= 32 k^2 (k - 3) (2 k + 1) (2 k + 3) (2 k + 7)/(64 k + 107),\\
a_{45} &= -24 k (3 k + 4) (16 k + 17)/(5 (64 k + 107)),\\
a_{52} &= 120 k^4 (k + 2) (2 k + 1) (2 k + 3) (3 k + 4) (8 k^2 +  5 k + 5)/(16 k + 17),\\
a_{54} &= -15 k^2 (208 k^3 + 649 k^2 + 580 k + 120)/(2 (16 k + 17)),
\end{align*}
and the other $a_{rs}$'s are $0$. 
The $4 \times 4$ matrix $(a_{rs})_{2 \le r,s \le 5}$ has four eigenvalues
\begin{equation}\label{eq:eigenvalues}
\pm 6 k^2 (k + 2), \qquad \pm 6 k^2 (3 k + 4).
\end{equation}

The four elements $W^s_{s-2} v$, $s = 2,3,4,5$ are of weight $1$, 
so that $o(W^2)$ acts as $1$ on these elements. 
We consider the following system of equations.
\begin{align*}
\frac{1}{2k(k+2)}\Big( k(i-2j) - (i-2j)^2 + 2k(i-j+1)j \Big) &= 1,\\
k^2 (i-2j) - 3k(i-2j)^2 + 2(i-2j)^3 - 6k(i-2j)(i-j+1)j &= \lambda,
\end{align*}
where the left hand side is the eigenvalue of the action of $o(W^2)$ and 
$o(W^3)$, respectively on the top level $\C v^{i,j}$ of the irreducible 
$\CW$-module $M^{i,j}$ \cite[Proposition 4.5]{DLY}. 
We can verify that there are no integers $i ,j$ which satisfy 
the above system of equations for any of  
$\lambda = \pm 6 k^2 (k + 2),  \pm 6 k^2 (3 k + 4)$.
This contradicts  \eqref{eq:eigenvalues}. 
Thus no irreducible $\CW$-module $M$ with weight $1$ top level 
can satisfy the exact sequence \eqref{eq:extention}.

The following theorem is proved.

\begin{thm}\label{thm:projectivity-CW}
$\CW$ is projective as a $\CW$-module.
\end{thm}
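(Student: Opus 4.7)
The plan is to reduce to short exact sequences $0 \to M \to N \to \CW \to 0$ in which $M$ is an irreducible $\CW$-module. Since $\CW$ is $C_2$-cofinite by Theorem \ref{thm:embedding_RW_in_RL} and Zhu's algebra $A(\CW)$ is semisimple by Theorem \ref{thm:Zhu_alg_CW}, any finite-length extension can be split by induction once this base case is settled. By Theorem \ref{thm:Zhu_alg_CW}, the irreducible $M$ must be one of the $M^{i,j}$, whose top level has a nonnegative integer weight. I would then split into three cases according to that top weight $h$: $h=0$, $h=1$, $h\ge 2$. Because $\CW_{(1)}=0$, the exact sequence gives $N_{(1)}\cong M_{(1)}$, which is $0$ when $h\ne 1$; this observation is what separates the easy cases from the delicate one.

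For $h=0$, the only possibility is $M\cong M^{0,0}=\CW$, so $N_{(0)}$ is a two-dimensional $A(\CW)$-module. Since $A(\CW)$ is commutative and semisimple, $N_{(0)}$ decomposes into one-dimensional eigenspaces $\C v^1\oplus \C v^2$; the submodule $N^r$ generated by $v^r$ meets $N_{(0)}$ in exactly $\C v^r$ because weight-zero elements only arise from the zero-mode action, so $N=N^1\oplus N^2$, forcing a splitting. For $h\ge 2$, $N_{(1)}=0$, so any lift $v\in N_{(0)}$ of $\1$ satisfies $L(-1)v\in N_{(1)}=0$; then $v$ is a vacuum-like vector in the sense of \cite[Corollary 4.7.6]{LL} and \cite[Proposition 3.3]{Li}, and the map $a\mapsto a_{-1}v$ provides a $\CW$-module splitting by \cite[Proposition 4.7.7]{LL}.

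The remaining, and genuinely hard, case is $h=1$: here $N_{(0)}$ and $N_{(1)}$ are both one-dimensional, and the $L(-1)$ trick fails. My plan would be to fix a lift $v\in N_{(0)}$ of $\1$ and study the four weight-one vectors $W^s_{s-2}v$, $s=2,3,4,5$, which together span $N_{(1)}$ and thus satisfy linear relations. The key computation is to express, using the commutator formulas \eqref{eq:VOA-formula-1}--\eqref{eq:Lm1-derivative}, the top-mode products $W^r_n W^s$ catalogued in \cite[Appendix B]{DLY}, and the highest-weight conditions $W^s_{s-1+n}v=0$ for $n\ge 0$, each $o(W^3)W^r_{r-2}v$ as a linear combination $\sum_s a_{rs}W^s_{s-2}v$. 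The $4\times 4$ matrix $A=(a_{rs})$ then governs the eigenvalue of $o(W^3)$ on $N_{(1)}$: since $N_{(1)}$ is one-dimensional, $(c_2,c_3,c_4,c_5)$ with $W^s_{s-2}v=c_s w$ must be an eigenvector of $A$, so the $o(W^3)$-eigenvalue must lie in $\operatorname{Spec}(A)$. Independently, this eigenvalue is determined by \cite[Proposition 4.5]{DLY} in terms of $(i,j)$ through the joint system for $o(W^2)$ and $o(W^3)$ with eigenvalue $1$ for $o(W^2)$.

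The main obstacle is therefore the numerical verification that the finite list $\operatorname{Spec}(A)$ (which turns out to consist of $\pm 6k^2(k+2)$ and $\pm 6k^2(3k+4)$) does not meet the discrete set of admissible $o(W^3)$-eigenvalues coming from any $(i,j)$ compatible with top weight $1$. This is a two-variable Diophantine-flavored check that I would carry out by solving the $o(W^2)$-equation $k(i-2j)-(i-2j)^2+2k(i-j+1)j=2k(k+2)$ for integers $i,j$ and then confirming that the corresponding value of the $o(W^3)$-eigenvalue polynomial never hits any element of $\operatorname{Spec}(A)$. Once this is achieved, the case $h=1$ is excluded, concluding the proof that $\CW$ is projective as a $\CW$-module.
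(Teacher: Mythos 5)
Your proposal is correct and follows essentially the same route as the paper's proof: the same reduction to irreducible $M$, the same three cases by top weight ($h=0$ via semisimplicity of $A(\CW)$ and the zero-mode argument, $h\ge 2$ via the vacuum-like vector, and $h=1$ via the matrix $(a_{rs})$ of $o(W^3)$ acting on the $W^s_{s-2}v$ and the eigenvalue comparison with \cite[Proposition 4.5]{DLY}). The only cosmetic difference is that you make explicit the observation that the coefficient vector $(c_2,c_3,c_4,c_5)$ must be nonzero for the spectral argument to apply, which the paper leaves implicit (and which is harmless, since $c=0$ forces $L(-1)v=0$ and hence a splitting anyway).
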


\section{$C_2$-cofiniteness of $K(\mathfrak{g},k)$: general case}\label{sec:C2_general_case}

In this section we show that the parafermion vertex operator algebra
$K(\mathfrak{g},k)$ is $C_2$-cofinite for any finite dimensional simple
Lie algebra $\mathfrak{g}$ and any positive integer $k$.
Our argument is a slightly modified version of that in 
\cite[Sections 3 and 4]{DW2}, which is based on a result of \cite{Li2}. 

For a vector space $X$, let $X^* = \Hom_\C(X,\C)$ be its dual space. 
A fact used in \cite{Li2} is that a vertex operator algebra $V$ is $C_2$-cofinite 
if and only if the dual space $(R_V)^*$ of $R_V = V/C_2(V)$ is contained in the restricted dual 
$V' = \bigoplus_n V_{(n)}^*$ of $V$. 
Indeed, the $\Z$-grading $R_V = \bigoplus_n (R_V)_n$ of \eqref{Z-grading-RV} 
implies that $(R_V)^* = \prod_n (R_V)_n^*$. Since $(R_V)_n^* \subset V_{(n)}^*$, 
we see that  $(R_V)^* \subset V'$ if and only if $(R_V)_n \ne 0$ 
for only finitely many $n$.

In \cite{Li2}, Li introduced a weak module $(D(M), Y^*)$ associated with 
a weak module $M$ for a vertex operator algebra $V$ \cite[Definition 2.4]{Li2}. 
Let $\la \,\cdot\,, \,\cdot\, \ra : M^* \times M \rightarrow \C$ be 
the natural pairing. For $v \in V$, $Y^*(v,z) = \sum_{n \in \Z} v^*_n z^{-n-1}$ 
is defined by 
\begin{equation*}
\la Y^*(v,z) w, u \ra = 
\la w, Y(e^{zL(1)} (-z^{-2})^{L(0)} v, z^{-1}) u \ra 
\end{equation*}
for $w \in M^*$ and $u \in M$. 
In fact, $(D(M), Y^*)$ is a unique maximal weak $V$-module contained in $(M^*, Y^*)$ 
\cite[Remark 2.9]{Li2}.  
An important property of $D(M)$ is \cite[Proposition 3.6]{Li2}
\begin{equation}\label{eq:MC2M_DM}
(M/C_2(M))^* \subset D(M),
\end{equation}
where $C_2(M)$ is the subspace of  $M$ spanned by the elements 
$v_{-2}u$, $v \in V$, $u \in M$ and $(M/C_2(M))^*$ is the set of 
$\eta \in M^*$ such that $\eta (C_2(M)) = 0$.

For a $V$-module $M = \bigoplus_{h \in \C} M_{(h)}$, a $V$-module 
structure $(M', Y')$ on its restricted dual $M' = \bigoplus_{h \in \C} M_{(h)}^*$ 
was defined and studied in \cite[Sections 5.2 and 5.3]{FHL}. The $V$-module $(M', Y')$ 
is called the contragredient module. The double cntragredient module 
$(M'', Y'')$ is naturally isomorphic to the original $V$-module $(M,Y)$ 
\cite[Proposition 5.3.1]{FHL}. 
In this case $M^* = \prod_{h \in \C} M_{(h)}^*$ 
and $Y^*(v,z)$ agrees with $Y'(v,z)$ on $M'$. 

We will consider $D(M)$ and $D(M')$ for a $V$-module $M = \bigoplus_{h \in \C} M_{(h)}$. 
A slightly different notation is used in \cite{DW2}. 
In fact,  $D(M')$ is denoted by $\mathcal{D}(M)$ in \cite{DW2}.
For the rest of this section, we follow the notation in \cite{DW2}. 
Note that the dual space $(M')^*$ of $M'$ is $\prod_h M_{(h)}$ 
since $M_{(h)}$ is finite dimensional. 
Thus
\begin{equation*}
\mathcal{D}(M) = \{ w \in \prod_h M_{(h)} \,|\, v_n w = 0 \text{ for } n \gg 0, v \in V\}.
\end{equation*}

For a vertex operator subalgebra $U$ of $V$, set
\begin{equation*}
\mathcal{D}_U(M) = \{ w \in \prod_h M_{(h)} \,|\, v_n w = 0 \text{ for } n \gg 0, v \in U\}.
\end{equation*}
Then $\mathcal{D}_U(M)$ is a unique maximal weak $U$-module contained in 
$\prod_h M_{(h)}$ and $\mathcal{D}(M) \subset \mathcal{D}_U(M)$.

\begin{lem}\label{lem:L0-semisimple}
Let $V = (V,Y,\1,\omega)$ be a vertex operator algebra and 
$U = (U,Y,\1,\omega_U)$ a vertex operator subalgebra with $\omega_U \in V_{(2)}$. 
Let $M = \bigoplus_{h \in \C} M_{(h)}$ be a $V$-module. 
Assume that $U$ is $C_2$-cofinite and that $L_U(0) = (\omega_U)_1$ is 
semisimple on $M$. 
Then $L_U(0)$ is semismple on $\mathcal{D}_U(M)$.
\end{lem}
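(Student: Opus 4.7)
My plan is to reduce the semisimplicity of $L_U(0)$ on $\mathcal{D}_U(M)$ to its semisimplicity on $M$ via an eigencomponent separation argument, invoking $C_2$-cofiniteness of $U$ only at the final step in order to control the spectrum. Since $\omega_U \in V_{(2)}$, the operator $L_U(0) = (\omega_U)_1$ has $L_V(0)$-weight zero and therefore preserves each finite-dimensional weight space $M_{(h)}$; combined with the hypothesis that $L_U(0)$ is semisimple on $M$, this yields finite eigenspace decompositions $M_{(h)} = \bigoplus_\lambda M_{(h)}^\lambda$. For any $w = (w_h)_h \in \prod_h M_{(h)}$ I would write $w_h = \sum_\lambda w_h^\lambda$ and form the formal eigencomponents $w^\lambda := (w_h^\lambda)_h \in \prod_h M_{(h)}^\lambda$.

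Next I would check that $w \in \mathcal{D}_U(M)$ implies $w^\lambda \in \mathcal{D}_U(M)$ for every $\lambda$. For $L_U(0)$-homogeneous $v \in U$ of weight $k$, the mode $v_n$ shifts $L_U(0)$-eigenvalues by $k-n-1$, so the elements $\{v_n w^\lambda\}_\lambda$ lie in pairwise distinct $L_U(0)$-eigenspaces. Componentwise, $(v_n w)_{h'} = \sum_\lambda v_n w^\lambda_{h'-k+n+1}$ is a finite sum with summands in linearly independent eigenspaces of $M_{(h')}$, so $v_n w = 0$ forces $v_n w^\lambda = 0$ termwise. Since $U$ is spanned by $L_U(0)$-homogeneous vectors, the truncation condition $v_n w = 0$ for $n \gg 0$ transfers to each $w^\lambda$, giving $w^\lambda \in \mathcal{D}_U(M)$.

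The crucial remaining point is that $w^\lambda$ is nonzero for only finitely many $\lambda$, and this is where $C_2$-cofiniteness enters. I would consider the cyclic weak $U$-submodule $N := U \cdot w \subseteq \mathcal{D}_U(M)$; the lower-truncation condition on $w$ makes $N$ a lower-bounded weak $U$-module generated by $w$. Invoking the principle (in the spirit of Buhl and Miyamoto) that over a $C_2$-cofinite vertex operator algebra every $\mathbb{N}$-gradable weak module is ordinary, one concludes that $L_U(0)$ acts semisimply on $N$ with finite-dimensional eigenspaces, so $w$ decomposes as a \emph{finite} sum of $L_U(0)$-eigenvectors in $N$; these must coincide with the nonzero $w^\lambda$ from the previous step. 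The main obstacle is precisely this last invocation: one needs to produce an $\mathbb{N}$-grading on $U \cdot w$ compatible with the truncation on $w$ so that the ``$C_2$-cofinite plus $\mathbb{N}$-gradable $\Rightarrow$ ordinary'' theorem applies, and then verify that the abstract eigenspace decomposition of $N$ really matches the componentwise one, after which the three steps combine to give $\mathcal{D}_U(M) = \bigoplus_\lambda \mathcal{D}_U(M)^\lambda$.
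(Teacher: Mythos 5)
Your componentwise separation (the first two paragraphs) is sound: since $\omega_U \in V_{(2)}$, the operator $L_U(0)$ preserves each finite-dimensional $M_{(h)}$ and acts semisimply there, and the formal eigencomponents $w^\lambda$ of any $w \in \mathcal{D}_U(M)$ do again lie in $\mathcal{D}_U(M)$. The genuine gap is the finiteness step, and the principle you invoke to close it is both inapplicable and, as you state it, false. Inapplicable: the theorem you cite takes an $\mathbb{N}$-gradable weak module as input, and you have no $\mathbb{N}$-grading on $U\cdot w$ --- producing one compatible with the truncation on $w$ is essentially the content of what you are trying to prove, so the argument is circular as it stands (you flag this yourself). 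False as stated: $C_2$-cofiniteness does \emph{not} imply that $L_U(0)$ acts semisimply on ($\mathbb{N}$-gradable) weak modules; the triplet algebras are $C_2$-cofinite and admit logarithmic modules on which $L(0)$ is not semisimple. Semisimplicity must come from the hypothesis on $M$, not from $C_2$-cofiniteness. What $C_2$-cofiniteness does give --- and this is exactly what the paper uses, citing \cite{ABD} --- is that \emph{every} weak module over a $C_2$-cofinite vertex operator algebra, with no gradability hypothesis, decomposes into a direct sum of \emph{generalized} eigenspaces for $L(0)$.

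Once that result is in hand the argument collapses to the paper's two lines and your eigencomponent machinery becomes unnecessary. Apply it to the weak $U$-module $\mathcal{D}_U(M)$ itself: any $w$ is a finite sum of generalized $L_U(0)$-eigenvectors, so it suffices to show that a generalized eigenvector $w=(w^h)_h$ with $(L_U(0)-\lambda)^m w = 0$ is a true eigenvector. Its components satisfy $(L_U(0)-\lambda)^m w^h = 0$ because $L_U(0)$ preserves $M_{(h)}$, and semisimplicity of $L_U(0)$ on $M_{(h)}$ then forces $L_U(0) w^h = \lambda w^h$ for every $h$, hence $L_U(0)w = \lambda w$. So your plan can be repaired by replacing the Buhl--Miyamoto-style invocation with the generalized-eigenspace decomposition of \cite{ABD}, but at that point the direct argument is shorter than the route through $w^\lambda$ and $U\cdot w$.
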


\begin{proof}
Since $\mathcal{D}_U(M)$ is a weak $U$-module and since $U$ is $C_2$-cofinite, 
$\mathcal{D}_U(M)$ decomposes into a
direct sum of generalized eigenspaces for $L_U(0)$
\cite[Lemma 5.6, Proposition 5.7]{ABD}.
Let $w = (w^h)_h \in \mathcal{D}_U(M)$ be a generalized 
eigenvector for $L_U(0)$ with eigenvalue $\lambda$, where $w^h \in M_{(h)}$.
Then $(L_U(0) - \lambda)^m w = 0$ for some $m$.
Since $\omega_U \in V_{(2)}$, the operator $L_U(0)$ 
is a weight $0$ operator and it leaves $M_{(h)}$ invariant.
Then for each $h$, we have $(L_U(0) - \lambda)^m w^h = 0$.
Furthermore,  $L_U(0)$ is semisimple on $M_{(h)}$, 
since we are assuming that $L_U(0)$ is semisimple on $M$. 
Thus $L_U(0) w^h = \lambda w^h$ for all $h$ and so $L_U(0) w = \lambda w$.
Hence $\mathcal{D}_U(M)$ is a direct sum of eigenspaces for
$L_U(0)$ as desired.
\end{proof}

Since $\mathcal{D}(M)$ is an $L_U(0)$-invariant subspace of $\mathcal{D}_U(M)$, 
the above lemma implies that $L_U(0)$ is semisimple on  $\mathcal{D}(M)$ 
also. 
Taking $M$ to be $V$, we have the following corollary.

\begin{cor}\label{cor:L0-semisimple}
Let $V = (V,Y,\1,\omega)$ be a vertex operator algebra and 
$U = (U,Y,\1,\omega_U)$ a vertex operator subalgebra with $\omega_U \in V_{(2)}$. 
Assume that $U$ is $C_2$-cofinite and that $L_U(0) = (\omega_U)_1$ is 
semisimple on $V$. 
Then $L_U(0)$ is semismple on $\mathcal{D}(V)$.
\end{cor}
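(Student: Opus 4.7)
The plan is to obtain this corollary as a direct specialization of Lemma \ref{lem:L0-semisimple} together with a standard fact about restriction of diagonalizable operators to invariant subspaces. First I would take $M = V$, viewing $V$ as a module over itself via the adjoint action; the grading $V = \bigoplus_n V_{(n)}$ is by the original $L(0) = \omega_1$, but all we need from the hypothesis of the lemma is that $L_U(0) = (\omega_U)_1$ acts semisimply on this module, which is exactly one of the hypotheses of the corollary. Together with the assumption that $U$ is $C_2$-cofinite, this lets me apply Lemma \ref{lem:L0-semisimple} directly and conclude that $L_U(0)$ is semisimple on $\mathcal{D}_U(V)$.

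Next I would pass from $\mathcal{D}_U(V)$ to $\mathcal{D}(V)$. By their definitions as the largest weak $V$-module and the largest weak $U$-module inside $\prod_h V_{(h)}$ respectively, the inclusion $\mathcal{D}(V) \subseteq \mathcal{D}_U(V)$ is immediate, and since $\omega_U \in U$ the operator $L_U(0) = (\omega_U)_1$ preserves $\mathcal{D}(V)$. So $\mathcal{D}(V)$ is an $L_U(0)$-invariant subspace of $\mathcal{D}_U(V)$ on which $L_U(0)$ acts as a diagonalizable operator.

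Finally I would invoke the elementary linear algebra fact that a diagonalizable linear operator remains diagonalizable on any invariant subspace: if $\mathcal{D}_U(V) = \bigoplus_\lambda E_\lambda$ is the eigenspace decomposition for $L_U(0)$, then for any $w \in \mathcal{D}(V)$ with decomposition $w = \sum_\lambda w_\lambda$, the successive vectors $w, L_U(0)w, L_U(0)^2 w, \ldots$ all lie in $\mathcal{D}(V)$, and a Vandermonde argument shows each component $w_\lambda$ lies in their span and hence in $\mathcal{D}(V)$. Thus $\mathcal{D}(V) = \bigoplus_\lambda (\mathcal{D}(V) \cap E_\lambda)$, proving that $L_U(0)$ is semisimple on $\mathcal{D}(V)$.

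There is essentially no obstacle here; the entire content sits in Lemma \ref{lem:L0-semisimple}, whose proof in turn rests on the $C_2$-cofiniteness input via \cite[Lemma 5.6, Proposition 5.7]{ABD}. The corollary is simply the specialization $M = V$ combined with the passage from the $U$-invariant object $\mathcal{D}_U(V)$ to the $V$-invariant object $\mathcal{D}(V)$.
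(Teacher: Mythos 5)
Your proposal is correct and matches the paper's argument exactly: the paper also deduces the corollary by specializing Lemma \ref{lem:L0-semisimple} to $M = V$ and then observing that $\mathcal{D}(V)$ is an $L_U(0)$-invariant subspace of $\mathcal{D}_U(V)$, on which semisimplicity is inherited. Your Vandermonde justification merely spells out the elementary restriction fact the paper leaves implicit.
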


We need another lemma, which is a variant of \cite[Lemma 3.2]{DW2}.

\begin{lem}\label{lem:Vir-unitary}
Let $V = (V,Y,\1,\omega)$ be a simple vertex operator algebra not necessarily
$V_{(n)} = 0$ for $n < 0$ nor $V_{(0)} = \C\1$. 
Suppose there is a positive definite hermitian form $\la \,\cdot\,, \,\cdot\, \ra$
on $V$ such that $(V,\, \la \,\cdot\,, \,\cdot\, \ra)$ is a unitary representation 
of the Virasoro algebra $Vir$ generated by the component operators of
$Y(\omega,z) = \sum_{n \in \Z} L(n) z^{-n-2}$. 
Then  $V = \bigoplus_{n \ge 0} V_{(n)}$ with $V_{(0)} = \C\1$ and $L(1)V_{(1)} = 0$.
\end{lem}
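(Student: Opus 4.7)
The plan is to exploit the unitarity of the Virasoro action, which forces $L(n)^* = L(-n)$ with respect to $\la\,\cdot\,,\,\cdot\,\ra$, together with the simplicity of $V$. Self-adjointness of $L(0)$ immediately makes the weight decomposition $V = \bigoplus_n V_{(n)}$ orthogonal, and the commutation $[L(1),L(-1)] = 2L(0)$ combined with $L(-1)^* = L(1)$ and positive definiteness yields the key identity
\begin{equation*}
\|L(-1)v\|^2 \;=\; \|L(1)v\|^2 + 2h\,\|v\|^2 \qquad (v \in V_{(h)}),
\end{equation*}
which drives all three assertions.

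First I would prove $V_{(n)} = 0$ for $n < 0$. Since the underlying VOA has $L(0)$-weights bounded below, let $h_0$ be the smallest integer with $V_{(h_0)} \ne 0$ and pick $0 \ne v \in V_{(h_0)}$. Minimality forces $L(n)v \in V_{(h_0 - n)} = 0$ for every $n \ge 1$, so the key identity reduces to $\|L(-1)v\|^2 = 2h_0\|v\|^2 \ge 0$, hence $h_0 \ge 0$.

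Next, for $V_{(0)} = \C\1$, take any $v \in V_{(0)}$. Then $L(0)v = 0$ and, by Step 1, $L(1)v \in V_{(-1)} = 0$, so the key identity gives $\|L(-1)v\|^2 = 0$ and hence $L(-1)v = 0$. Thus $v$ is a vacuum-like vector, and by \cite[Proposition 3.3]{Li} (or \cite[Proposition 4.7.7]{LL}) the map $g_v : V \to V$, $a \mapsto a_{-1}v$, is a $V$-module endomorphism. Simplicity of $V$ means $V$ is irreducible as a $V$-module; since $g_v$ preserves the finite-dimensional subspace $V_{(0)}$ it has an eigenvalue $\lambda \in \C$ there, so $\ker(g_v - \lambda)$ is a nonzero submodule of $V$ and must equal $V$. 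Evaluating $g_v = \lambda\,\mathrm{id}$ at $\1$ yields $v = g_v(\1) = \lambda\1 \in \C\1$.

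Finally, for $L(1)V_{(1)} = 0$, let $v \in V_{(1)}$. Then $L(1)v \in V_{(0)} = \C\1$, say $L(1)v = \alpha\1$. Since $L(-1)\1 = 0$, we get $L(-1)L(1)v = 0$, and therefore $\|L(1)v\|^2 = \la v, L(-1)L(1)v\ra = 0$, forcing $L(1)v = 0$. The main technical point is Step 1, which relies on the standing VOA assumption that the $L(0)$-spectrum is bounded below so that a minimum-weight vector exists; the remaining steps are then clean consequences of positive definiteness of $\la\,\cdot\,,\,\cdot\,\ra$ and the simplicity hypothesis on $V$.
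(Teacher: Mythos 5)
Your proof is correct, and its overall skeleton matches the paper's: show the $L(0)$-spectrum is nonnegative, show every vector of $V_{(0)}$ is vacuum-like and use simplicity to conclude $V_{(0)}=\C\1$, then use positive definiteness to kill $L(1)V_{(1)}$. Where you differ is in how the first two facts are obtained. The paper invokes complete reducibility of the unitary $\Vir$-module $V$ and structural facts about unitary highest weight modules (nonnegative highest weights; $L(-1)$ annihilates the highest weight vector of the weight-$0$ module), and then cites \cite[Proposition 3.11.4]{LL} for the statement that vacuum-like vectors in a simple vertex operator algebra lie in $\C\1$. You instead extract everything from the single norm identity $\|L(-1)v\|^2=\|L(1)v\|^2+2h\|v\|^2$ coming from the $\{L(-1),L(0),L(1)\}$ triple and $L(n)^{*}=L(-n)$: applied to a minimal-weight vector (legitimate, since the standard VOA axioms kept here still include lower truncation of the grading) it gives $h_0\ge 0$, and applied to $v\in V_{(0)}$ it gives $L(-1)v=0$ directly. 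You also reprove the cited fact about vacuum-like vectors by a Schur-type argument with the endomorphism $a\mapsto a_{-1}v$, which is sound since that map preserves the finite-dimensional space $V_{(0)}$ and its $\lambda$-eigenspace is a nonzero submodule. The net effect is a more elementary and self-contained argument that avoids the representation theory of unitary Virasoro modules, at the cost of redoing a small amount of standard material; both routes are equally rigorous.
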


\begin{proof}
By our hypothesis, $V$ is completely reducible as a $Vir$-module
and so $L(0)$ is semisimple on $V$. 
Moreover, the eigenvalues for $L(0)$ on $V$ are nonnegative. 
Thus $V_{(n)} = 0$ for $n < 0$. 

Let $0 \ne v \in V_{(0)}$. Then $v$ is a highest weight vector for $Vir$ with 
highest weight $0$. The $Vir$-module generated by $v$ is an irreducible 
highest weight module with highest weight $0$ and so $L(-1)v = 0$. 
Hence $v$ is a vacuum-like vector by \cite[Corollary 4.7.6]{LL}, 
\cite[Proposition 3.3]{Li}, that is, 
$u_n v = 0$ for all $n \ge 0$ and $u \in V$.  
Then $v \in \C\1$ by \cite[Proposition 3.11.4]{LL}, for $V$ is a simple 
vertex operator algebra. 
Thus  $V_{(0)} = \C\1$. 

For $v \in V_{(1)}$, we have $\la L(1)v, \1 \ra = \la v, L(-1)\1 \ra = 0$ 
by the unitarity. 
Since $L(1)V_{(1)} \subset V_{(0)} = \C\1$ and since $\la \,\cdot\,, \,\cdot\, \ra$ 
is positive definite, we conclude that $L(1)V_{(1)} = 0$.
\end{proof}

The following theorem is a modified version of \cite[Theorem 3.3]{DW2}, namely, we 
remove the rationality of vertex operator subalgebras from the assumption.

\begin{thm}\label{thm:C2_general}
Let $V = (V,Y,\1,\omega)$ be a simple vertex operator algebra not necessarily
$V_{(n)} = 0$ for $n < 0$ nor $V_{(0)} = \C\1$.
Assume that there exist finitely many vertex operator subalgebras
$V^i = (V^i,Y,\1,\omega^i)$ with $\omega^i \in V_{(2)}$, $1 \le i \le p$
of $V$ which satisfy the following three conditions: 
$($a$)$ Each $V^i$ is $C_2$-cofinite,
$($b$)$ $\omega = \sum_{i=1}^p a_i \omega^i$ for some positive constants $a_i$,
$($c$)$ There is a positive definite hermitian form $\la \,\cdot\,, \,\cdot\, \ra$
on $V$ such that $(V,\, \la \,\cdot\,, \,\cdot\, \ra)$ is a unitary representation 
of the Virasoro algebra $Vir^i$ generated by the component operators of
$Y(\omega^i,z) = \sum_{n \in \Z} L^i(n) z^{-n-2}$ for $1 \le i \le p$.
Then the the following assertions hold.

$(1)$ $V = \bigoplus_{n \ge 0} V_{(n)}$ with $V_{(0)} = \C\1$ and $L(1)V_{(1)} = 0$.

$(2)$ $V$ is $C_2$-cofinite.
\end{thm}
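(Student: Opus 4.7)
Apply Lemma \ref{lem:Vir-unitary} to $V$ with its own conformal vector $\omega$. By hypothesis (c), the component operators satisfy $L^i(n)^* = L^i(-n)$ with respect to $\la\cdot,\cdot\ra$, for each $i$. Since $\omega = \sum_i a_i \omega^i$ with $a_i > 0$ real, we have $L(n) = \sum_i a_i L^i(n)$ and hence $L(n)^* = L(-n)$. Therefore $V$ is a unitary representation of the Virasoro algebra generated by $Y(\omega, z)$, and Lemma \ref{lem:Vir-unitary} immediately yields $V = \bigoplus_{n \ge 0} V_{(n)}$ with $V_{(0)} = \C\1$ and $L(1) V_{(1)} = 0$.

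\textbf{Plan for (2).} I follow the strategy of \cite[Sections 3 and 4]{DW2}, with the rationality of the $V^i$'s there replaced by the unitary semisimplicity output of Lemma \ref{lem:L0-semisimple}. The starting point is Li's inclusion \eqref{eq:MC2M_DM} applied to the adjoint module $M = V$:
\begin{equation*}
(V/C_2(V))^* \subset D(V) \subset V^{*},
\end{equation*}
so it suffices to prove that $(V/C_2(V))^*$ is finite-dimensional. For each $i$, condition (c) makes $L^i(0)$ self-adjoint on each finite-dimensional $V_{(n)}$, hence semisimple on $V$ and, by dualizing, on the restricted dual $V'$. Applying Lemma \ref{lem:L0-semisimple} with $U = V^i$ and $M = V'$, using the $C_2$-cofiniteness hypothesis (a), then gives $L^i(0)$-semisimplicity on $\mathcal{D}_{V^i}(V') \supset D(V)$. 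The unitarity of $Vir^i$ also forces the $L^i(0)$-eigenvalues on $V$, and thus on $D(V)$, to be non-negative.

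\textbf{Main obstacle.} The decisive step is to convert these $L^i(0)$-eigenspace decompositions of $D(V)$ into finite-dimensionality of each $L(0)$-weight space of $D(V)$, and thereby to show that $(V/C_2(V))^*$ lies in the restricted dual $V'$. The tool is the structural consequence of the ABD theorem invoked in the proof of Lemma \ref{lem:L0-semisimple}: for each $C_2$-cofinite $V^i$, any weak $V^i$-module decomposes into $L^i(0)$-generalized eigenspaces whose eigenvalues lie in a finite union of cosets of $\Z$, with finite-dimensional top components controlled by the (finitely many) irreducible $V^i$-modules. Intersecting these constraints across the finitely many indices $i$ and using $L(0) = \sum_i a_i L^i(0)$ with $a_i > 0$, one deduces that each $L(0)$-eigenspace of $D(V)$ is finite-dimensional and that only finitely many eigenvalues pair nontrivially with $R_V$, which forces $\dim R_V < \infty$. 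This is exactly the argument of \cite[Proposition 4.2]{DW2} with rationality replaced throughout by the unitary semisimplicity provided by Lemma \ref{lem:L0-semisimple}; the genuine delicacy is keeping track of the simultaneous $L^i(0)$-spectra on $D(V)$ even though the $L^i(0)$'s need not commute, and this is handled weight-component-wise using that each $L^i(0)$ is a weight-zero operator preserving the $L(0)$-grading on $V$.
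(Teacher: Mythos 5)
Your part (1) and the skeleton of your part (2) --- Li's criterion $(V/C_2(V))^*\subset D(V)$, Lemma \ref{lem:L0-semisimple} as the replacement for rationality, and the nonnegativity of the $L^i(0)$-spectrum --- are exactly the paper's route. The difficulty is that the step you label the ``main obstacle'' is precisely where the proof has to do its real work, and what you offer there does not amount to an argument. First, ``each $L(0)$-eigenspace of $D(V)$ is finite-dimensional'' is automatically true and useless: an $L(0)$-eigenvector of eigenvalue $n$ in $V^*=\prod_m V_{(m)}^*$ is supported on the single finite-dimensional component $V_{(n)}^*$. What must actually be shown is that every element of $D(V)$ has only \emph{finitely many} nonzero graded components, i.e.\ $D(V)\subset V'$; and ``only finitely many eigenvalues pair nontrivially with $R_V$'' is a restatement of that conclusion, not a derivation of it. Appealing to the argument of \cite{DW2} ``with rationality replaced'' begs the question, since removing rationality is the whole point of the theorem.

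The missing mechanism is a positivity estimate. Take $v=(v^n)_n\in\mathcal{D}(V)$ with $v^n\in V_{(n)}$. Semisimplicity of $L^i(0)$ on $\mathcal{D}(V)$ (Corollary \ref{cor:L0-semisimple}) gives a \emph{finite} decomposition $v=\sum_r v^{(i,r)}$ into $L^i(0)$-eigenvectors with eigenvalues $\lambda^i_r\le\lambda^i$; since $L^i(0)$ has weight $0$ it preserves each $V_{(n)}$, so $v^n=\sum_r v^{(i,r)n}$ is a sum of $L^i(0)$-eigenvectors, and orthogonality of distinct eigenspaces for the positive definite form yields $\la L^i(0)v^n,v^n\ra\le\lambda^i\la v^n,v^n\ra$. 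Summing against $L(0)=\sum_i a_iL^i(0)$ with $a_i>0$ gives $n\la v^n,v^n\ra\le\bigl(\sum_i a_i\lambda^i\bigr)\la v^n,v^n\ra$, forcing $v^n=0$ for all large $n$ and hence $\mathcal{D}(V)=V$. This inequality is what ``handled weight-component-wise'' has to mean, and it is absent from your sketch. A secondary but genuine slip: your inclusion $\mathcal{D}_{V^i}(V')\supset D(V)$ conflates two ambient spaces, since $\mathcal{D}_{V^i}(V')$ sits inside $\prod_h V_{(h)}$ while $D(V)$ sits inside $\prod_h V_{(h)}^*$; identifying them requires the self-duality $V\cong V'$, which needs $V_{(0)}=\C\1$ and $L(1)V_{(1)}=0$ from part (1). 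The paper therefore proves $\mathcal{D}(V)=V$ first and only then transports the statement to $V'$ to conclude $(R_V)^*\subset V'$.
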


\begin{proof}
The conditions (b) and (c) imply that $(V,\, \la \,\cdot\,, \,\cdot\, \ra)$ is a unitary 
representation of the Virasoro algebra $Vir$ generated by the component 
operators of $Y(\omega,z) = \sum_{n \in \Z} L(n) z^{-n-2}$. 
Hence the assertion (1) is a consequence of Lemma \ref{lem:Vir-unitary}.

We will show that $\mathcal{D}(V) = V$. 
By the condition (c), $V$ is completely reducible as a $Vir^i$-module 
and so $L^i(0)$ acts on $V$ semisimply. 
Moreover, the eigenvalues for $L^i(0)$ on $V$ are nonnegative real numbers. 
It follows from Corollary \ref{cor:L0-semisimple} that $L^i(0)$ is semisimple on 
$\mathcal{D}(V)$ with nonnegative eigenvalues.

Now, we follow the proof of \cite[Theorem 3.3]{DW2}. 
Suppose $\mathcal{D}(V) \ne V$ and take $v = (v^n)_{n \ge 0} \in \mathcal{D}(V)$ 
not contained in $V$, where $v^n \in V_{(n)}$. 
Since $L^i(0)$ is semisimple on $\mathcal{D}(V)$, we write 
$v = \sum_{r = 1}^{j_i} v^{(i,r)}$, where $v^{(i,r)}$ is an eigenvector for $L^i(0)$ 
with eigenvalue $\lambda_r^i \in \R_{\ge 0}$ and  
$\lambda_r^i \ne \lambda_s^i$ if $r \ne s$. 
Let $\lambda^i$ be the maximum of $\lambda_r^i$, $1 \le r \le j_i$. 
Denote $v^{(i,r)}$ by $v^{(i,r)} = (v^{(i,r)n})_{n \ge 0}$ as an element of 
$\mathcal{D}(V)$ with $v^{(i,r)n} \in V_{(n)}$. 
Then $v^n =  \sum_{r = 1}^{j_i} v^{(i,r)n}$. 
Since $\omega^i \in V_{(2)}$, the operator $L^i(0) = \omega^i_1$ 
is a weight $0$ operator and it preserves $V_{(n)}$. 
Hence $L^i(0) v^{(i,r)n} = \lambda_r^i v^{(i,r)n}$. 
Moreover, $\la L^i(0) u, w \ra = \la u, L^i(0) w \ra$ for $u, w \in V$. 
Thus $\la v^{(i,r)n}, v^{(i,s)n} \ra = 0$ if $r \ne s$. 
Since $\la \,\cdot\,, \,\cdot\, \ra$ is positive definite, we have 
$\la L^i(0) v^n, v^n \ra \le \lambda^i \la  v^n, v^n \ra$. 
Recall that $L(0)v^n = n v^n$ and $L(0) = \sum_{i=1}^p a_i L^i(0)$. 
Hence $n \la v^n, v^n \ra \le \sum_{i=1}^p a_i  \lambda^i \la  v^n, v^n \ra$. 
Since $\sum_{i=1}^p a_i  \lambda^i$ is independent of $n$ and 
$\la \,\cdot\,, \,\cdot\, \ra$ is positive definite, this inequality implies that 
$v^n \ne 0$ for only finitely many $n$. But then $v \in V$, which is a 
contradiction. Therefore,  $\mathcal{D}(V) = V$. 

The assertion (1) implies that there is, up to scalar multiple, 
a unique nondegenerate symmetric invariant bilinear form on $V$ \cite{Li}. 
Then the $V$-module $(V,Y)$ is isomorphic to the contragredient module $(V',Y')$ 
\cite[Remark 5.3.3]{FHL}. 
In particular, $\mathcal{D}(V) = V$ implies $\mathcal{D}(V') = V'$. 
Then $(R_V)^* \subset V'$ by \eqref{eq:MC2M_DM}, 
since $D(V) = \mathcal{D}(V')$. 
Thus the assertion (2) holds.
\end{proof}

We use the notation of \cite{DW, DW2} for general parafermion vertex 
operator algebras. 
Thus $k$ is a positive integer, $\mathfrak{g}$ is a finite dimensional 
simple Lie algebra of rank $\ell$ with Cartan subalgebra $\h$, 
$V_{\widehat{\mathfrak{g}}}(k,0)$ is the vacuum Weyl module
for the affine Kac-Moody Lie algebra $\widehat{\mathfrak{g}}$ with level $k$ and
$L_{\widehat{\mathfrak{g}}}(k,0)$ is the simple quotient of the vacuum Weyl module. 
The parafermion vertex operator algebra $K(\mathfrak{g},k)$ is the commutant 
of the Heisenberg vertex operator algebra $M_{\widehat{\h}}(k,0)$ in 
$L_{\widehat{\mathfrak{g}}}(k,0)$. 
Let $\omega_{\mraff}$ and $\omega_{\h}$ be the conformal vectors of 
$L_{\widehat{\mathfrak{g}}}(k,0)$ and $M_{\widehat{\h}}(k,0)$, respectively. 
Then the conformal vector of $K(\mathfrak{g},k)$ is 
$\omega = \omega_{\mraff} - \omega_{\h}$.

Let $\Delta$ be the root system, 
$\Delta_+$ the set of positive roots, 
and $\la \,\cdot\,, \,\cdot\, \ra$ an invariant 
symmetric nondegenerate bilinear form on $\mathfrak{g}$ such that 
$\la \alpha,\alpha \ra = 2$ for a long root $\alpha$. 
For $\alpha \in \Delta_+$, 
let $\mathfrak{g}_\alpha$, $x_{\pm \alpha}$ and 
$h_\alpha$ be as in \cite{DW2}. 
Then 
$[x_{\alpha}, x_{-\alpha}] = h_{\alpha}$, 
$[h_{\alpha}, x_{\pm \alpha}] = \pm 2 x_{\pm \alpha}$ 
and $\mathfrak{g}^\alpha = \C x_{\alpha} + \C h_{\alpha} + \C x_{-\alpha}$ 
is a subalgebra of $\mathfrak{g}$ isomorphic to $sl_2$. 
For $\alpha \in \Delta$, set $k_{\alpha} = \frac{2}{\la \alpha,\alpha \ra} k$ and 
let $\omega_{\alpha}$ and $W^3_{\alpha}$ be as in \cite{DW2}. 
Then $k_{\alpha} \in \{ k, 2k, 3k \}$. 
Let $P_{\alpha}$ be the vertex operator subalgebra of $K(\mathfrak{g},k)$ 
generated by $\omega_{\alpha}$ and $W^3_{\alpha}$. 
By \cite[Propositions 4.5, 4.6]{DW}, $P_{\alpha}$ is isomorphic to the parafermion 
vertex operator algebra $K(\mathfrak{g}^\alpha, k_{\alpha})$ associated with 
$\mathfrak{g}^\alpha \cong sl_2$ and $\omega_\alpha$ is its conformal vector.

We apply Theorem \ref{thm:C2_general} to $V = K(\mathfrak{g},k)$ and its 
vertex operator subalgebras $V^\alpha = P_\alpha$, $\alpha \in \Delta_+$. 
Indeed,  $K(sl_2, 1) = \C$ and for $2 \le k_\alpha \le 4$, $K(sl_2, k_{\alpha})$ 
is isomorphic to a well-known rational and $C_2$-cofinite vertex operator algebra. 
In the case $k_\alpha \ge 5$, 
we know $K(sl_2, k_{\alpha})$ is $C_2$-cofinite by Theorem \ref{thm:embedding_RW_in_RL}. 
Thus $P_\alpha$ is $C_2$-cofinite for any positive integer $k$, 
and so the condition (a) of Theorem \ref{thm:C2_general} is satisfied. 
Moreover, the condition (b) is satisfied, since
\begin{equation*}
\omega = \sum_{\alpha \in \Delta_+} 
\frac{k(k_\alpha + 2)}{k_\alpha (k + h^\vee)} \omega_\alpha,
\end{equation*}
where $h^\vee$ is the dual Coxeter number of $\mathfrak{g}$ \cite[Section 2]{DW2}.  
Finally, the condition (c) is satisfied by \cite[Lemma 4.3]{DW2}. 
Therefore, the following theorem is a consequence of Theorem \ref{thm:C2_general}.

\begin{thm}\label{thm:C2_cofinte_general_case}
The parafermion vertex operator algebra  $K(\mathfrak{g},k)$ is $C_2$-cofinite 
for any finite dimensional simple Lie algebra $\mathfrak{g}$ and any positive integer $k$. 
\end{thm}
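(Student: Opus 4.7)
The plan is to apply Theorem \ref{thm:C2_general} to $V = K(\mathfrak{g},k)$ using the vertex operator subalgebras $V^\alpha = P_\alpha$ indexed by the positive roots $\alpha \in \Delta_+$. The ambient vertex operator algebra $K(\mathfrak{g},k)$ is simple, and it does \emph{not} satisfy $V_{(n)} = 0$ for $n<0$ a priori in the generality needed by the theorem's hypotheses, which is precisely why Theorem \ref{thm:C2_general} was phrased without that assumption. So the structure of the argument is purely a verification that the three hypotheses (a), (b), (c) hold for this family of subalgebras.

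First, I would invoke the Dong--Wang identification $P_\alpha \cong K(\mathfrak{g}^\alpha, k_\alpha) \cong K(sl_2, k_\alpha)$ with $k_\alpha \in \{k,2k,3k\}$, together with the fact that $\omega_\alpha$ is the conformal vector of $P_\alpha$ and lies in $K(\mathfrak{g},k)_{(2)}$. To verify (a), I split on the value of $k_\alpha$: for $k_\alpha = 1$ the algebra is trivial; for $k_\alpha \in \{2,3,4\}$ it is a well-known rational, $C_2$-cofinite vertex operator algebra (an Ising model, the $3$-state Potts model, or the lattice orbifold $V_{\Z\beta}^+$ as recalled in the introduction); and for $k_\alpha \ge 5$ Theorem \ref{thm:embedding_RW_in_RL} gives exactly the $C_2$-cofiniteness needed. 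Hence $P_\alpha$ is $C_2$-cofinite for every $\alpha \in \Delta_+$.

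Condition (b) is essentially automatic: the decomposition
\begin{equation*}
\omega = \sum_{\alpha \in \Delta_+} \frac{k(k_\alpha + 2)}{k_\alpha(k + h^\vee)}\, \omega_\alpha
\end{equation*}
from \cite[Section 2]{DW2} expresses the conformal vector of $K(\mathfrak{g},k)$ as a positive linear combination of the $\omega_\alpha$, since $k, k_\alpha, h^\vee$ are all positive. Condition (c) reduces to producing a positive definite Hermitian form on $K(\mathfrak{g},k)$ which is simultaneously a unitary representation of each of the Virasoro algebras $Vir^\alpha$ generated by $Y(\omega_\alpha,z)$; this is provided by \cite[Lemma 4.3]{DW2}. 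With (a), (b), (c) in hand, Theorem \ref{thm:C2_general} yields that $K(\mathfrak{g},k)$ is $C_2$-cofinite.

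The only real obstacle is conceptual rather than computational: the point of Theorem \ref{thm:C2_general} in this paper was precisely to remove the rationality assumption present in \cite[Theorem 3.3]{DW2}, so that the $C_2$-cofiniteness of the $P_\alpha$ alone (rather than rationality \emph{and} $C_2$-cofiniteness) suffices. Once one notices this, the application is a clean bookkeeping exercise: gather the $C_2$-cofiniteness of $K(sl_2,k_\alpha)$ from Theorem \ref{thm:embedding_RW_in_RL} and the small-$k$ cases, and then quote the two Dong--Wang formulas for the conformal vector decomposition and the unitary form.
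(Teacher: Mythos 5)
Your proposal is correct and follows essentially the same route as the paper: apply Theorem \ref{thm:C2_general} to $V = K(\mathfrak{g},k)$ with the subalgebras $P_\alpha \cong K(sl_2,k_\alpha)$, checking (a) via the small-$k_\alpha$ cases together with Theorem \ref{thm:embedding_RW_in_RL}, and (b), (c) via the conformal vector decomposition and the unitary form from Dong--Wang. Your closing remark about the role of Theorem \ref{thm:C2_general} in removing the rationality hypothesis is exactly the point the paper makes.
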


\section*{Acknowledgments}
The authors would like to thank Toshiyuki Abe, Toshiro Kuwabara, Takeshi Suzuki 
and Hiroshi Yamauchi for valuable discussions. 
Part of computation was done by a computer algebra system Risa/Asir.
The authors are indebted to Kazuhiro Yokoyama for helpful advice concerning
the system. 
Some part of the work was done while C. L. and H. Y.  were staying
at Kavli Institute for Theoretical Physics China, Beijing in July and August, 2000, 
T. A, C. L. and H. Y. were staying at National Center for Theoretical Sciences (South), 
Tainan in September, 2000, 
and T. A. and H. Y. were staying at Academia Sinica, Taipei in December, 2011. 
They are grateful to those institutes. 
Part of the results was announced at a conference in Varna, Bulgaria, June 2011.
H. Y. thanks Institute for Nuclear Research and Nuclear Energy, 
Bulgarian Academy of Science for kind hospitality.

T. A. was partially supported by JSPS Grant-in-Aid for Scientific
Research (B) No. 20340007 and JSPS Grant-in-Aid for Challenging Exploratory
Research No. 23654006. 
C. L. was partially supported by NSC grant
100-2628-M-001005-MY4 and National Center for Theoretical Sciences, Taiwan.
H. Y. was partially supported by
JSPS Grant-in-Aid for Scientific Research (C) No. 23540009.

\end{document}